\def\@cite#1#2{[{{\bfseries #1}\if@tempswa , #2\fi}]}
\renewcommand{\section}{%
\@startsection{section}{1}{\z@}
{0.5truecm plus -1ex minus -.2ex}%
{1.0ex plus .2ex}{\bfseries\large}}
\def\@seccntformat#1{\csname the#1\endcsname.\ }
\numberwithin{equation}{section} 
\newtheorem{thm}{Theorem}[section]
\newtheorem{lem}[thm]{Lemma}
\theoremstyle{definition}
\newtheorem*{prth1.1}{Proof of Theorem 1.1}
\newtheorem*{prth1.2}{Proof of Theorem 1.2}
\newtheorem*{prth1.3}{Proof of Theorem 1.3}
\newcommand{\ep}{\varepsilon}
\newcommand{\pa}{\partial}
\newcommand{\tmax}{T_{\rm max}}
\newcommand{\lp}[2]{\|#2\|_{L^{#1}(\Omega)}}
\begin{document}

\footnote[0]
    {2010{\it Mathematics Subject Classification}\/. 
    Primary: 35K45; Secondary: 92C17; 35Q35.
    }
\footnote[0]
    {{\it Key words and phrases}\/: 
    chemotaxis-Stokes; 
    global existence; asymptotic stability. 
    }
\begin{center}
    \Large{{\bf Global existence and asymptotic behavior\\ 
    of classical solutions for a 
    3D two-species\\ chemotaxis-Stokes 
    system\\ with competitive kinetics 
               }}
\end{center}
\vspace{5pt}
\begin{center}
    Xinru Cao \\
    \vspace{2pt}
  Institut f\"ur Mathematik, Universit\"at Paderborn\\ 
  Warburger Str.100, 33098 Paderborn, Germany\\
    \vspace{12pt}
    Shunsuke Kurima\\
    \vspace{2pt}
    Department of Mathematics, 
    Tokyo University of Science\\
    1-3, Kagurazaka, Shinjuku-ku, Tokyo 162-8601, Japan\\
    \vspace{12pt}
     Masaaki Mizukami\footnote{Corresponding author}\\
    \vspace{2pt}
    Department of Mathematics, 
    Tokyo University of Science\\
    1-3, Kagurazaka, Shinjuku-ku, Tokyo 162-8601, Japan\\
    {\tt masaaki.mizukami.math@gmail.com}\\
\end{center}
\begin{center}    
    \small \today
\end{center}

\vspace{2pt}
\newenvironment{summary}
{\vspace{.5\baselineskip}\begin{list}{}{%
     \setlength{\baselineskip}{0.85\baselineskip}
     \setlength{\topsep}{0pt}
     \setlength{\leftmargin}{12mm}
     \setlength{\rightmargin}{12mm}
     \setlength{\listparindent}{0mm}
     \setlength{\itemindent}{\listparindent}
     \setlength{\parsep}{0pt} 
     \item\relax}}{\end{list}\vspace{.5\baselineskip}}
\begin{summary}
{\footnotesize {\bf Abstract.}
This paper considers the two-species chemotaxis-Stokes system 
with competitive kinetics 
\begin{equation*}
     \begin{cases}
         (n_1)_t + u\cdot\nabla n_1 =
          \Delta n_1 - \chi_1\nabla\cdot(n_1\nabla c) 
          + \mu_1n_1(1- n_1 - a_1n_2),
         &x\in \Omega,\ t>0,
 \\[2mm]
         (n_2)_t + u\cdot\nabla n_2 =
          \Delta n_2 - \chi_2\nabla\cdot(n_2\nabla c) 
          + \mu_2n_2(1- a_2n_1 - n_2),
         &x\in \Omega,\ t>0,
 \\[2mm]
         c_t + u\cdot\nabla c = 
         \Delta c - (\alpha n_1 +\beta n_2)c,
          &x \in \Omega,\ t>0,
 \\[2mm]
        u_t  
          = \Delta u + \nabla P 
            + (\gamma n_1 + \delta n_2)\nabla\phi, 
            \quad \nabla\cdot u = 0, 
          &x \in \Omega,\ t>0
     \end{cases} 
 \end{equation*} 
under homogeneous Neumann boundary conditions in a three-dimensional bounded domain 
$\Omega \subset \mathbb{R}^3$ with smooth boundary. 
Both chemotaxis-fluid systems and two-species 
chemotaxis systems with competitive terms 
are studied by many mathematicians. 
However, 
there has not been rich results 
on coupled two-species-fluid systems. 
Recently, global existence and asymptotic stability 
in the above problem with $(u\cdot \nabla)u$ in the 
fluid equation of the above system 
were established in the 2-dimensional case (\cite{HKMY_1}). 
The purpose of this paper is to give results 
for global existence, boundedness and stabilization 
of solutions to the above system 
in the 3-dimensional case 
when $\frac{\mu_i}{\chi_i}$ $(i=1,2)$ is sufficiently large.
}
\end{summary}
\vspace{10pt}

\newpage

\section{Introduction and results}

%
%
We consider the following 
two-species chemotaxis-fluid system 
with competitive terms:
 \begin{equation}\label{P}
     \begin{cases}
         (n_1)_t + u\cdot\nabla n_1 =
          \Delta n_1 - \chi_1\nabla\cdot(n_1\nabla c) 
          + \mu_1n_1(1- n_1 - a_1n_2),
         &x\in \Omega,\ t>0,
 \\[2mm]
         (n_2)_t + u\cdot\nabla n_2 =
          \Delta n_2 - \chi_2\nabla\cdot(n_2\nabla c) 
          + \mu_2n_2(1- a_2n_1 - n_2),
         &x\in \Omega,\ t>0,
 \\[2mm]
         c_t + u\cdot\nabla c = \Delta c -(\alpha n_1 + \beta n_2)c,
          &x \in \Omega,\ t>0,
 \\[2mm]
        u_t  + \kappa (u\cdot\nabla) u 
          = \Delta u + \nabla P 
            + (\gamma n_1 + \delta n_2)\nabla\phi, 
            \quad \nabla\cdot u = 0, 
          &x \in \Omega,\ t>0, 
 \\[2mm]
        \partial_\nu n_1 
        = \partial_\nu n_2 = \partial_\nu c = 0, \quad 
        u = 0, 
        &x \in \partial\Omega,\ t>0, 
 \\[2mm]
        n_i(x,0)=n_{i,0}(x),\ 
        c(x,0)=c_0(x),\ u(x,0)=u_0(x), 
        &x \in \Omega,\ i=1,2,
     \end{cases}
 \end{equation}
\noindent
where $\Omega$ is a bounded domain 
in $\mathbb{R}^3$ with smooth boundary $\partial\Omega$ and 
$\partial_\nu$ denotes differentiation with respect to the 
outward normal of $\partial\Omega$; 
$\kappa\in\{0,1\}$ 
(in this paper we will deal with 
the case that $\kappa=0$), 
$\chi_1, \chi_2, a_1, a_2 \ge 0$ and 
$\mu_1, \mu_2, \alpha, \beta, \gamma, \delta > 0$ are 
constants;  
$n_{1,0}, n_{2,0}, c_0, u_0, \phi$ 
are known functions satisfying
 \begin{align}\label{condi;ini1}
   &0 < n_{1,0}, n_{2,0} 
   \in C(\overline{\Omega}), 
 \quad 
   0 < c_0 \in W^{1,q}(\Omega), 
 \quad 
   u_0 \in D(A^{\vartheta}), 
  \\\label{condi;ini2}
   &\phi \in C^{1+\eta}(\overline{\Omega}) 
 \end{align}
for some $q > 3$, 
$\vartheta \in \left(\frac{3}{4}, 1\right)$, 
$\eta > 0$ and $A$ is 
the Stokes operator. 

%
%
The problem \eqref{P} is a generalized system 
to the chemotaxis-fluid system which is 
proposed by Tuval et al.\ \cite{Tuval_et_al}. 
This system
describes 
the evolution of 
two competing species 
which react on a single chemoattractant 
in a liquid surrounding environment. 
Here $n_1,n_2$ represent the 
population densities of species, 
$c$ stands for the concentration of chemoattractant, 
$u$ shows the fluid velocity field and 
$P$ represents the pressure of the fluid. 
The problem \eqref{P} comes from a problem 
on account of the influence of 
chemotaxis, 
the Lotka--Volterra competitive kinetics and the fluid. 
In the mathematical point of view, 
the chemotaxis term: $\nabla \cdot (n_1 \nabla c)$, 
the competition term: $n_1 (1-n_1-a_1 n_2)$ and 
the Stokes equation give difficulties in mathematical analysis. 

The one-species system \eqref{P} with $n_2=0$ has 
been studied in some literature. 
It is known that there exist global classical solutions 
in the 2-dimensional setting; however, 
in the 3-dimensional setting, 
only global weak solutions exist. 
In this one-species system with $\mu_1=0$, 
Winkler first attained global existence of 
classical solutions to \eqref{P}, $\kappa=0$
in the 3-dimensional setting 
and $\kappa=1$ 
in the 2-dimensional setting (\cite{W-2012}), 
and also established asymptotic stability 
of solutions to \eqref{P} (\cite{W-2014}). 
Moreover, the convergence rate has been already 
studied (\cite{Zhang-Li_2015_fluid}). 
Recently, 
Winkler \cite{Winkler_2017_Howfar} attained 
global existence 
and eventual smoothness of weak solutions 
and their 
asymptotic behavior 
for the 3-dimensional chemotaxis-Navier--Stokes 
system. 

In the analysis of the one-species case 
the logistic source can enhance the possibility of global existence of solutions. 
In the 3-dimensional setting, 
Lankeit \cite{Lankeit_2016} obtained 
global existence of weak solutions 
in \eqref{P} with $n_2=0$, 
$\kappa=1$ and with additional external force $f$ 
in the fourth equation, 
and also derived eventual smoothness 
and asymptotic behavior. 
Even for more complicated problems, 
Keller--Segel-fluid systems 
where 
 $-(\alpha n_1 + \beta n_2)c$ is 
replaced with 
$-c + \alpha n_1$ 
in \eqref{P} with $n_2=0$, 
logistic source is shown to be helpful for 
establishing classical bounded solutions. 
In the 3-dimensional setting, 
Tao and Winkler \cite{Tao-Winkler_2015_mu23} 
established 
global existence and boundedness of 
classical solutions by assuming that
$\mu_1 > 23$. 
In the 2-dimensional case, 
Tao and Winkler \cite{TW-2016} also showed 
global existence of bounded classical solutions 
in the Keller--Segel-Navier--Stokes system 
with logistic source 
with $+r n_1 -\mu_1 n_1^2$
 for any $\mu_1>0$, 
and their asymptotic behavior were obtained when $r=0$. 
For more related works 
we refer to 
Ishida \cite{Ishida_2015}, 
Wang and the first author \cite{Wang-Cao}, 
Wang and Xiang \cite{Wang-Xiang_2015}, 
Black \cite{Tobaias_2016}, 
the first author \cite{Xinru_2016}, 
the first author and Lankeit \cite{Xinru-Johannes_2016}, 
Kozono, Miura and Sugiyama \cite{Kozono-Miura-Sugiyama_2016}. 
These results fully parallel to those 
for the fluid free model; we can find 
counterpart in \cite{Johannes-Yulan,OTYM,Tao-Winkler_2012_consumption}.

On the other hand, the 
study on two-species competitive 
chemotaxis systems with signal consumption seems pending. 
We can only find related research with 
signal production in which 
the asymptotic behavior of solutions 
usually 
relies on some 
smallness assumption 
for the chemotaxis sensitivities 
(e.g., for the noncompetitive case 
($a_1=a_2=0$), see 
Negreanu and Tello \cite{N-T_SIAM,N-T_JDE}, 
the third author and Yokota \cite{MY-2016}, 
the third author \cite{M-2016}; 
for the competitive case see 
Tello and Winkler \cite{Tello_Winkler_2012},  
Stinner, Tello and Winkler \cite{stinner_tello_winkler}, 
Bai and Winkler \cite{B-W}, 
Black, Lankeit and the third author \cite{Black-Lankeit-Mizukami}, 
the third author \cite{Mizukami}). 

As mentioned above, the  
chemotaxis-fluid systems ($n_2=0$ in \eqref{P}) and 
the chemotaxis systems with competitive terms ($u=0$ in \eqref{P}) 
were studied by many mathematicians. 
However, the problem \eqref{P}, 
which is the combination of chemotaxis-fluid systems 
and chemotaxis-competition systems, 
had not been studied. 
Recently, global existence, 
boundedness of classical solutions 
and their asymptotic behavior 
were showed only in the 2-dimensional setting 
(\cite{HKMY_1}). 

The purpose of the present article is to 
obtain 
global existence and boundedness 
of classical solutions, and 
their asymptotic stability 
in the 3-dimensional setting. 
The main results read as follows. 
The first theorem gives global existence and 
boundedness in \eqref{P}. 
In view of known results on 
logistic chemotaxis-systems, 
it is no wonder that an assumption 
on smallness of $\chi_1$ and $\chi_2$ related to 
$\mu_1$ and $\mu_2$ will be necessary 
in the considered 3-dimensional case. 
%

 \begin{thm}\label{maintheorem1}
 Let $\Omega \subset \mathbb{R}^3$ be a 
 bounded domain with smooth boundary and let $\kappa=0$, $\chi_1, 
 \chi_2, a_1, a_2 \ge 0$, 
$\mu_1, \mu_2, \alpha, \beta, \gamma, \delta > 0$. 
Suppose that \eqref{condi;ini1} and \eqref{condi;ini2} hold. 
Then there exists a constant $\xi_0 > 0$ such that 
whenever 
\[
\chi := \max\{\chi_1, \chi_2\} \quad \mbox{and}\quad 
\mu := \min\{\mu_1, \mu_2\}
\] 
satisfy $\frac{\chi}{\mu} < \xi_0$, 
the problem \eqref{P} possesses a classical solution 
$(n_1, n_2, c, u, P)$ such that 
\begin{align*}
&n_1, n_2 \in C(\overline{\Omega}\times[0, \infty)) 
\cap C^{2, 1}(\overline{\Omega}\times(0, \infty)), 
\\
&c \in C(\overline{\Omega}\times[0, \infty)) 
\cap C^{2, 1}(\overline{\Omega}\times(0, \infty)) 
\cap L^{\infty}_{{\rm loc}}([0, \infty); W^{1, q}(\Omega)), 
\\
&u \in C(\overline{\Omega}\times[0, \infty)) 
\cap C^{2, 1}(\overline{\Omega}\times(0, \infty))
\cap L^\infty_{\rm loc}([0,\infty);D(A^{\vartheta})), 
\\  
&P \in C^{1, 0}(\overline{\Omega} \times (0, \infty)).
\end{align*} 
Also, the solution is unique in the sense that 
it allows up to 
addition of spatially constants to the pressure $P$. 
Moreover, there exists a constant $C>0$ such that 
   $$
   \|n_1(\cdot, t)\|_{L^{\infty}(\Omega)} + \|n_2(\cdot, t)\|_{L^{\infty}(\Omega)} 
   + \|c(\cdot, t)\|_{W^{1, q}(\Omega)} + \|u(\cdot, t)\|_{L^{\infty}(\Omega)} 
   \leq C 
   \quad \mbox{for all}\ t\in (0,\infty).
   $$
\end{thm}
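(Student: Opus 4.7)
The overall strategy is the standard one for chemotaxis--fluid problems: establish local existence with a blow-up criterion, then derive a priori estimates sharp enough to rule out blow-up. A contraction-mapping argument of the type used by Winkler \cite{W-2012} and adapted to two species as in \cite{HKMY_1} furnishes a maximal classical solution $(n_1,n_2,c,u,P)$ on $[0,T_{\max})$, together with the extensibility criterion that $T_{\max}<\infty$ forces
\[
\limsup_{t\nearrow T_{\max}}\bigl(\|n_1(\cdot,t)\|_{L^{\infty}}+\|n_2(\cdot,t)\|_{L^{\infty}}+\|c(\cdot,t)\|_{W^{1,q}}+\|A^{\vartheta}u(\cdot,t)\|_{L^{2}}\bigr)=\infty.
\]
The parabolic maximum principle gives $n_1,n_2,c\ge 0$, and since $-(\alpha n_1+\beta n_2)c\le 0$ one also obtains the pointwise bound $0\le c\le \|c_0\|_{L^{\infty}(\Omega)}$, a fact that will be used throughout.

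The heart of the argument is a coupled quasi-energy estimate. Integrating the $n_i$-equations (with the convective term discarded via $\nabla\cdot u=0$ and $u|_{\partial\Omega}=0$) and absorbing by the quadratic logistic damping already yields uniform $L^1$ bounds on $n_1,n_2$. To reach $L^p$, I would test each $n_i$-equation by $n_i^{p-1}$ and couple it with the evolution of $\int_{\Omega}|\nabla c|^{2q}$, yielding a functional of the form
\[
\mathcal{F}(t)=\frac{1}{p}\int_{\Omega}n_1^{p}+\frac{1}{p}\int_{\Omega}n_2^{p}+\lambda\int_{\Omega}|\nabla c|^{2q}
\]
for suitable exponents $p\ge 2$, $q\ge 1$ and a weight $\lambda>0$. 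The chemotaxis-generated contributions $\chi_i\int n_i^{p-1}\nabla n_i\cdot\nabla c$ are rewritten by integration by parts as $-\frac{\chi_i}{p}\int n_i^{p}\Delta c$ and then split via Young's inequality so that one portion is absorbed into the logistic dissipation $\mu_i\int n_i^{p+1}$ and the other into the diffusive reservoir $\int|\nabla c|^{2q-2}|D^{2}c|^{2}$ appearing when $\int|\nabla c|^{2q}$ is differentiated. The threshold $\xi_0$ is precisely the constant for which this double absorption succeeds simultaneously for both $i=1,2$ at some fixed $p>3$; under $\chi/\mu<\xi_0$ one deduces $\|n_i(\cdot,t)\|_{L^{p}}\le C$ uniformly in $t\in(0,T_{\max})$.

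Once $L^p$ with $p>3$ is in hand, the remainder is a bootstrap. $L^p$--$L^q$ estimates for the Stokes semigroup applied to $u_t=\Delta u+\nabla P+(\gamma n_1+\delta n_2)\nabla\phi$ yield a uniform bound on $\|A^{\vartheta}u(\cdot,t)\|_{L^{2}}$, hence in particular $\|u(\cdot,t)\|_{L^{\infty}}\le C$ through the Sobolev embedding $D(A^{\vartheta})\hookrightarrow L^{\infty}$ valid for $\vartheta>3/4$. Smoothing estimates for the Neumann heat semigroup applied to $c_t-\Delta c=-u\cdot\nabla c-(\alpha n_1+\beta n_2)c$ then give $\|\nabla c(\cdot,t)\|_{L^{q}}\le C$. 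A final Moser-type iteration, or equivalently a semigroup argument applied to each $n_i$-equation whose drift $u-\chi_i\nabla c$ is by then bounded, produces $\|n_i(\cdot,t)\|_{L^{\infty}}\le C$. These bounds contradict the extensibility criterion unless $T_{\max}=\infty$, and they simultaneously deliver the uniform-in-time estimate stated in the theorem; uniqueness up to addition of spatial constants to $P$ follows from a standard Gronwall argument on the difference of two hypothetical solutions.

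The principal obstacle is the construction of $\mathcal{F}$ and the verification that the smallness condition $\chi/\mu<\xi_0$ permits both $\chi_1\int n_1^{p}\Delta c$ and $\chi_2\int n_2^{p}\Delta c$ to be absorbed simultaneously from the \emph{single} reservoir of dissipation furnished by the lone chemoattractant $c$. Compounding this difficulty is the usual 3D headache with the boundary term $\int_{\partial\Omega}|\nabla c|^{2q-2}\partial_\nu|\nabla c|^{2}$, which is not sign-definite on a general domain and must be controlled by a trace-plus-interpolation inequality of Mizoguchi--Souplet type, thereby further constraining the admissible $(p,q)$. Calibrating the four free parameters $p$, $q$, $\lambda$ and $\xi_0$ so that the resulting differential inequality for $\mathcal{F}$ closes uniformly in $t$, independently of the two species' logistic strengths beyond the single ratio $\chi/\mu$, is the main technical burden of the proof.
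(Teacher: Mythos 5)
Your overall scaffolding (local existence with the extensibility criterion, $L^1$ bounds, maximum principle for $c$, and the final semigroup bootstrap to $A^{\vartheta}u\in L^2$, $\nabla c\in L^r$ and $n_i\in L^\infty$) matches the paper. But the core step — the uniform $L^p$ bound on $n_1,n_2$ — is where your argument has genuine gaps, and it is exactly the step the paper handles by a different device. First, your proposed splitting does not work as stated: Young applied to $\chi_i\int_\Omega n_i^p|\Delta c|$ produces $\ell\int_\Omega n_i^{p+1}+C\ell^{-p}\chi_i^{p+1}\int_\Omega|\Delta c|^{p+1}$, and the term $\int_\Omega|\Delta c|^{p+1}$ cannot be absorbed into the weighted reservoir $\int_\Omega|\nabla c|^{2q-2}|D^2c|^2$ coming from $\frac{d}{dt}\int_\Omega|\nabla c|^{2q}$; the classical coupled-functional route would instead keep $\int_\Omega n_i^p|\nabla c|^2$ and couple to $\int_\Omega|\nabla c|^{2(p+1)}$, which is a different computation from the one you describe. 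Second, the evolution of $\int_\Omega|\nabla c|^{2q}$ under the full system contains the transport contribution $-2q\int_\Omega|\nabla c|^{2q-2}\nabla c\cdot\nabla(u\cdot\nabla c)$, which requires control of $\nabla u$ that is not available at this stage (only $u\in L^r$, $r<3$, follows from the $L^1$ bounds); you never say how the fluid enters your functional. Third, your assertion that the threshold comes out as a condition on the single ratio $\chi/\mu$ ``at some fixed $p>3$'' is not derived, and the homogeneity is not obvious in your scheme (the Young constants scale like $\chi^{2(p+1)}\mu^{-p}$, not like a function of $\chi/\mu$).

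The paper avoids all three difficulties by never introducing $\int_\Omega|\nabla c|^{2q}$: it controls $\int_{s_0}^t e^{(p+1)s}\int_\Omega|\Delta c|^{p+1}$ directly through maximal Sobolev regularity for the rewritten equation $c_t=(\Delta-1)c+f$ (Lemma \ref{pre2}), estimating $\|u\cdot\nabla c\|_{L^{p+1}}$ by interpolating $u$ between the known $L^r$ bound ($r<3$) and $\|Au\|_{L^2}$, the latter being bounded by $\int_\Omega n_i^{p+1}$ via a Stokes energy estimate (Lemma \ref{pote6}); this interpolation is precisely why the paper restricts to $p\in(\tfrac32,2)$ (condition \eqref{keypoint}), and $p>\tfrac32$ already suffices for the bootstrap, so no $p>3$ energy estimate is needed. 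Because the chemotaxis term is then absorbed through $\mu>\ell+K(p)\ell^{-p}\chi^{p+1}$ with $K(p)$ independent of $\chi,\mu$, minimizing over $\ell$ gives a threshold linear in $\chi$, i.e.\ exactly the hypothesis $\chi/\mu<\xi_0$. As it stands, your proof of the decisive $L^p$ estimate is not closed; either adopt the maximal-regularity-plus-Stokes-energy route of the paper, or, if you insist on the $\int|\nabla c|^{2q}$ functional, you must add the correct chemotaxis splitting, a mechanism for the $u\cdot\nabla c$ term (e.g.\ coupling a fluid energy into the functional), the Mizoguchi--Souplet boundary control you mention, and an honest derivation of how the smallness condition reduces to $\chi/\mu$.
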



The second theorem is concerned with 
asymptotic stability in \eqref{P}. 


 \begin{thm}\label{maintheorem3}
Let the assumption of Theorem \ref{maintheorem1} holds.  
Then the solution of \eqref{P} 
has the following properties\/{\rm :} 
   \begin{enumerate}
   \item[{{\rm (i)}}] Assume that $a_1, a_2 \in (0, 1)$. 
       Then 
       $$
       n_1(\cdot, t) \to N_1,\quad n_2(\cdot, t) \to N_2,\quad c(\cdot, t) \to 0,\quad 
       u(\cdot, t) \to 0 \quad \mbox{in}\ L^{\infty}(\Omega)\quad \mbox{as}\ t \to \infty, 
       $$
where 
       $$
       N_1 := \frac{1 - a_1}{1 - a_1a_2},\quad N_2 := \frac{1 - a_2}{1 - a_1a_2}.
       $$
   \item[{{\rm (ii)}}] Assume that $a_1 \geq 1 > a_2$. 
      Then 
      $$
      n_1(\cdot, t) \to 0,\quad n_2(\cdot, t) \to 1,\quad c(\cdot, t) \to 0,\quad 
       u(\cdot, t) \to 0 \quad \mbox{in}\ L^{\infty}(\Omega)\quad \mbox{as}\ t \to \infty.
      $$
   \end{enumerate}
\end{thm}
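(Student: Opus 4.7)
The plan is to construct Lyapunov functionals adapted to the two competitive regimes, combine them with the uniform bounds from Theorem \ref{maintheorem1} to obtain $L^\infty$-convergence of $n_1, n_2, c$, and treat the fluid velocity separately at the end via a Stokes semigroup argument. For case (i), $a_1, a_2 \in (0,1)$, I would use the relative-entropy functional
\begin{equation*}
E_1(t) := \int_\Omega \Bigl(n_1 - N_1 - N_1 \ln\tfrac{n_1}{N_1}\Bigr) + \lambda_1 \int_\Omega \Bigl(n_2 - N_2 - N_2 \ln\tfrac{n_2}{N_2}\Bigr) + \lambda_2 \int_\Omega c^2,
\end{equation*}
and for case (ii), $a_1 \geq 1 > a_2$, its degenerate variant
\begin{equation*}
E_2(t) := \int_\Omega n_1 + \theta_1 \int_\Omega (n_2 - 1 - \ln n_2) + \theta_2 \int_\Omega c^2,
\end{equation*}
with positive weights $\lambda_i, \theta_i$ to be optimized. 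The fluid convection terms $u \cdot \nabla n_i$ and $u \cdot \nabla c$ integrate to zero against the respective entropy densities because $\nabla \cdot u = 0$ and $u|_{\partial\Omega}=0$, so the fluid only enters at the level of uniform bounds from Theorem \ref{maintheorem1}.

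Differentiating $E_1$ along \eqref{P} and using the equilibrium relations $1 = N_1 + a_1 N_2$ and $1 = a_2 N_1 + N_2$ produces diffusive dissipations $\int |\nabla n_i|^2/n_i^2$ and $\int |\nabla c|^2$, a logistic quadratic form in $(n_1-N_1, n_2-N_2)$ that is negative definite for an appropriate $\lambda_1$ precisely because $a_1 a_2 < 1$, chemotaxis cross-terms of the form $\chi_i N_i \int n_i^{-1}\nabla n_i \cdot \nabla c$, and the sign-definite consumption term $-2\lambda_2\int (\alpha n_1 + \beta n_2) c^2 \leq 0$. Applying Young's inequality to the cross-terms and absorbing them into the diffusions and the $\lambda_2$-weighted $\int |\nabla c|^2$ becomes possible once $\chi/\mu$ is small enough, which may require further restriction of the $\xi_0$ already fixed in Theorem \ref{maintheorem1}. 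The outcome is
\begin{equation*}
\frac{d}{dt} E_1(t) + c_\ast \int_\Omega \Bigl( (n_1 - N_1)^2 + (n_2 - N_2)^2 + c^2 + |\nabla c|^2 \Bigr) \leq 0,
\end{equation*}
and a parallel but more delicate computation for $E_2$ yields the analogue with $n_1 + (n_2 - 1)^2$ in place of $(n_1 - N_1)^2 + (n_2 - N_2)^2$.

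Integrating in time and using $E_i \geq 0$ produces space-time integrability of $(n_1 - N_1)^2 + (n_2 - N_2)^2 + c^2$ in case (i) and its counterpart in case (ii); the parabolic regularity from Theorem \ref{maintheorem1} makes these integrands uniformly continuous as $L^1(\Omega)$-valued functions of $t$, forcing $L^2$-convergence to the announced limits. I would lift this to $L^\infty$-convergence via Gagliardo--Nirenberg interpolation of the form $\|w\|_{L^\infty} \leq C \|w\|_{W^{1,q}}^\theta \|w\|_{L^2}^{1-\theta}$ with $q > 3$, using the uniform $W^{1,q}$-type control from Theorem \ref{maintheorem1}. For the fluid velocity I would split the forcing as $(\gamma n_1 + \delta n_2)\nabla\phi = (\gamma N_1 + \delta N_2)\nabla\phi + \bigl(\gamma(n_1 - N_1) + \delta(n_2 - N_2)\bigr)\nabla\phi$: the first piece is a gradient and vanishes under the Helmholtz projection, while the second tends to zero uniformly, so a variation-of-constants representation with the Stokes semigroup and its exponential decay on $D(A^\vartheta)$ yields $\|u(\cdot,t)\|_{L^\infty(\Omega)} \to 0$.

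The main obstacle is the calibration of the weights so that the logistic quadratic form is negative definite while the chemotaxis cross-terms remain absorbable under a smallness condition on $\chi/\mu$ compatible with Theorem \ref{maintheorem1}. Case (ii) is the more subtle one: since $N_1 = 0$ the standard relative entropy for $n_1$ is unavailable, and the linear substitute $\int n_1$ only dissipates strictly once $n_2$ is close to $1$, which forces a two-step argument in which $L^\infty$-smallness of $n_1$ is first extracted from $\int_0^\infty \int_\Omega n_1 < \infty$ and only then used to close the loop on $n_2 \to 1$ through the now-effective logistic dissipation.
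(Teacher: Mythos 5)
Your energy functionals are exactly the ones the paper uses (Lemmas \ref{lem;energy;case1} and \ref{lem;energy;case2}, taken from \cite{HKMY_1}), and the overall scheme --- integrate the dissipation inequality, upgrade the resulting space--time integrability to $L^\infty$-convergence via regularity and uniform continuity, treat $u$ separately --- is the paper's. However, your claimed inequality
\begin{equation*}
\frac{d}{dt}E_1(t) + c_\ast\int_\Omega \bigl((n_1-N_1)^2+(n_2-N_2)^2+c^2+|\nabla c|^2\bigr)\le 0
\end{equation*}
is not derivable by the computation you describe: the only mechanism producing a $-c^2$ contribution is the consumption term $-2\lambda_2\int_\Omega(\alpha n_1+\beta n_2)c^2$, which degenerates where $n_1,n_2$ are small, and no pointwise lower bound for the densities is available at that stage (Poincar\'e applied to $\int|\nabla c|^2$ does not help, since $c$ carries no zero-mean normalization). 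So your route to $\|c(\cdot,t)\|_{L^\infty(\Omega)}\to 0$ has a genuine hole. The paper closes precisely this hole in two steps: it first proves $n_2(\cdot,t)\to N_2>0$ (resp.\ $\to 1$) in $L^\infty(\Omega)$, deduces a positive lower bound $n_2\ge C_{59}$ for $t>T^{*}$ (Lemma \ref{esti;low;n}), and only then converts $\int\!\!\int(\alpha n_1+\beta n_2)c^2<\infty$, which the energy inequality does give, into $\int_{T^{*}}^\infty\!\int_\Omega c^2<\infty$. A parallel remark applies to case (ii): the dissipation one can actually extract is $\int n_1^2+\int(n_2-1)^2$ (this is what Lemma \ref{lem;energy;case2} provides), not $\int n_1$, and the finiteness of $\int_0^\infty\!\int_\Omega n_1$ on which your two-step plan relies is not justified --- nor is it needed, since $\int_0^\infty\!\int_\Omega n_1^2<\infty$ suffices.

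Two further points. No additional restriction of $\xi_0$ is needed: the chemotaxis cross-terms $\chi_iN_i\int n_i^{-1}\nabla n_i\cdot\nabla c$ are absorbed by the entropy dissipations $\int |\nabla n_i|^2/n_i^2$ and by the $-\int|\nabla c|^2$ coming from the $c^2$-part of the functional, whose weight is free to be taken large because its time derivative contributes only nonpositive terms; if your proof did shrink $\xi_0$, it would establish a weaker statement than the theorem, which assumes only the hypotheses of Theorem \ref{maintheorem1}. Also, Theorem \ref{maintheorem1} gives a $W^{1,q}$-bound only for $c$, not for $n_1,n_2$, so your interpolation $\|w\|_{L^\infty}\le C\|w\|_{W^{1,q}}^{\theta}\|w\|_{L^2}^{1-\theta}$ for $w=n_i-N_i$, and likewise the uniform continuity of $t\mapsto\int_\Omega(n_i-N_i)^2$, need an extra ingredient; the paper obtains uniform $C^{\alpha_0,\alpha_0/2}$-bounds from parabolic regularity theory \cite{Ladyzenskaja} and then applies the compactness argument of \cite{HKMY_1}. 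Your Stokes-semigroup treatment of $u$ (splitting off the gradient $(\gamma N_1+\delta N_2)\nabla\phi$, annihilated by the Helmholtz projection, and using decay of the semigroup) is a valid alternative to the paper's $L^2$-energy/ODE argument combined with interpolation through $D(A^{\vartheta})$.
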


The strategy for the proof of Theorem \ref{maintheorem1} 
is to derive the $L^p$-estimate for $n_i$ with $p>\frac{3}{2}$. 
By using the differential inequality we can see 
\begin{align*}
 \int_\Omega n_1^p +\int_\Omega n_2^p 
 \le C 
 \int_{s_0}^t e^{-(p+1)(t-s)}\int_\Omega |\Delta c|^{p+1}  
 -C\int_{s_0}^{t}e^{-(p+1)(t-s)}
 \Bigl(\int_{\Omega}n_1^{p+1} + \int_{\Omega}n_2^{p+1} \Bigr)
\end{align*}
with some $C>0$ and $s_0>0$. 
The maximal Sobolev regularity 
(see Lemma \ref{pre2}) will be used to control 
$\int_{s_0}^t e^{-(p+1)(t-s)}\int_\Omega |\Delta c|^{p+1}$. 
Combining the maximal Sobolev regularity 
with some estimate for $|Au|^2$, we can obtain 
the $L^p$-estimate for $n_i$. 
On the other hand, 
the strategy for the proof of 
Theorem \ref{maintheorem3} is 
to derive the following inequality: 
\begin{align}\label{st;ineq}
  \int_0^\infty\int_\Omega (n_1-N_1)^2 
  + \int_0^\infty\int_\Omega (n_2-N_2)^2
  \le C
\end{align}
with some $C>0$, where $(N_1,N_2,0,0)$ 
is a constant solution to \eqref{P}. 
In order to obtain this estimate 
we will use the energy function 
\begin{align*}
E:=\int_\Omega 
       \left(
         n_1-N_1-N_1\log \frac{n_1}{N_1}
       \right)
       + 
       b_1\int_\Omega 
       \left(
         n_2-N_2-N_2\log \frac{n_2}{N_2}
       \right)
       + \frac{b_2}{2}
       \int_\Omega c^2 
\end{align*}
with some $b_1,b_2>0$, and show 
\begin{align*}
 \frac{d}{dt}E(t)\le 
 -\ep \int_\Omega \left[(n_1-N_1)^2+(n_2-N_2)^2\right]
\end{align*} 
with some $\ep>0$. 
This estimate and the positivity of $E(t)$ lead to 
\eqref{st;ineq}. 

This paper is organized as follows. 
In Section 2 we collect basic facts 
which will be used later. 
In Section 3 we prove global existence and boundedness (Theorem \ref{maintheorem1}). 
Sections 4 is devoted to showing 
asymptotic stability (Theorem \ref{maintheorem3}). 


\section{Preliminaries}

In this section we will provide 
some results which 
will be used later. 
The following lemma gives 
local existence of 
solutions to \eqref{P}.


 \begin{lem}\label{pre1}
 Let $\Omega \subset \mathbb{R}^3$ be a bounded domain with smooth boundary.  
Suppose that 
\eqref{condi;ini1} and \eqref{condi;ini2} hold. 
Then there exists $\tmax \in (0, \infty]$ 
such that the problem \eqref{P} possesses a classical solution 
$(n_1, n_2, c, u, P)$ fulfilling 
   \begin{align*}
&n_1, n_2 \in C(\overline{\Omega}\times[0, \tmax)) 
\cap C^{2, 1}(\overline{\Omega}\times(0, \tmax)), \\ 
&c \in C(\overline{\Omega}\times[0, \tmax)) 
\cap C^{2, 1}(\overline{\Omega}\times(0, \tmax)) 
\cap L^{\infty}_{{\rm loc}}([0, \tmax); W^{1, q}(\Omega)), \\ 
&u \in C(\overline{\Omega}\times[0, \tmax)) 
\cap C^{2, 1}(\overline{\Omega}\times(0, \tmax)) 
\cap L^\infty_{{\rm loc}}([0, \tmax); D(A^{\vartheta})), 
\\
&n_1,n_2>0,\quad c>0\quad \mbox{in}\ 
\Omega\times (0,\tmax).
  \end{align*}
Also, the solution is unique up to addition of spatially constants to the pressure $P$. 
Moreover, either $\tmax = \infty$ or 
   $$
   \limsup_{t\nearrow \tmax}
    (\|n_1(\cdot, t)\|_{L^{\infty}(\Omega)} + \|n_2(\cdot, t)\|_{L^{\infty}(\Omega)} 
    + \|c(\cdot, t)\|_{W^{1, q}(\Omega)} 
    + \|A^{\vartheta}u(\cdot, t)\|_{L^2(\Omega)}) 
   = \infty.
   $$         
 \end{lem}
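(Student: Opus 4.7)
The plan is to establish Lemma \ref{pre1} by a standard Banach fixed-point argument adapted to the coupled chemotaxis-Stokes structure, in the spirit of Winkler \cite{W-2012} and the two-dimensional precursor \cite{HKMY_1}. First I would fix $R>0$ larger than the norm of the initial data in the relevant spaces, choose $T\in(0,1)$ to be determined, and work in the closed set
\begin{equation*}
 S_T := \Bigl\{ (n_1,n_2,c,u)\in X_T : \|(n_1,n_2,c,u)-(n_{1,0},n_{2,0},c_0,u_0)\|_{X_T}\le R \Bigr\},
\end{equation*}
where $X_T$ is the Banach space $C([0,T];C(\overline\Omega))^2\times C([0,T];W^{1,q}(\Omega))\times C([0,T];D(A^{\vartheta}))$. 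For $(\bar n_1,\bar n_2,\bar c,\bar u)\in S_T$, I would define $\Phi(\bar n_1,\bar n_2,\bar c,\bar u):=(n_1,n_2,c,u)$ by solving the decoupled linear problems
\begin{align*}
 n_i(t) &= e^{t\Delta}n_{i,0} - \chi_i\int_0^t e^{(t-s)\Delta}\nabla\!\cdot(\bar n_i\nabla\bar c)\,ds - \int_0^t e^{(t-s)\Delta}\bar u\!\cdot\!\nabla\bar n_i\,ds\\
 &\quad + \mu_i\int_0^t e^{(t-s)\Delta}\bar n_i(1-\bar n_i-a_i\bar n_{3-i})\,ds,\\
 c(t) &= e^{t\Delta}c_0 - \int_0^t e^{(t-s)\Delta}\bar u\!\cdot\!\nabla\bar c\,ds - \int_0^t e^{(t-s)\Delta}(\alpha\bar n_1+\beta\bar n_2)\bar c\,ds,\\
 u(t) &= e^{-tA}u_0 + \int_0^t e^{-(t-s)A}\mathcal P\bigl[(\gamma\bar n_1+\delta\bar n_2)\nabla\phi\bigr]\,ds,
\end{align*}
where $\mathcal P$ denotes the Helmholtz projection. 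Using standard smoothing estimates for the Neumann heat semigroup and for the Stokes semigroup together with the embedding $D(A^{\vartheta})\hookrightarrow L^\infty(\Omega)$ for $\vartheta>\tfrac34$, I would show that for sufficiently small $T$ the map $\Phi$ leaves $S_T$ invariant and is a strict contraction; the Banach fixed-point theorem then produces a mild solution.

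Next I would upgrade this mild solution to a classical one. With the fixed point in hand the coefficients appearing in each equation lie in suitable Hölder classes on $\overline\Omega\times[\tau,T]$ for any $\tau\in(0,T)$, so classical parabolic Schauder theory for the Neumann problem yields $n_1,n_2,c\in C^{2,1}(\overline\Omega\times(0,T))$, while the linear nonstationary Stokes theory gives the claimed regularity of $u$ and of the pressure $P$, uniquely determined up to an additive time-dependent constant. Positivity of $n_1,n_2,c$ then follows from the parabolic maximum principle applied to each scalar equation, after rewriting the chemotaxis term as $-\chi_i\nabla c\cdot\nabla n_i - \chi_i n_i\Delta c$ with bounded coefficients, and using $n_{1,0},n_{2,0},c_0>0$ together with Hopf's lemma on $\partial\Omega$.

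Finally, standard continuation produces a maximal existence time $T_{\max}\in(0,\infty]$ and the blow-up criterion. If one assumes $T_{\max}<\infty$ and the norm in the lemma stays bounded up to $T_{\max}$, then the fixed-point construction can be restarted at a time $t_0$ close to $T_{\max}$ with a uniform time-step, contradicting maximality. Uniqueness follows by applying Gronwall's inequality to the difference of two solutions in $X_T$, again exploiting the semigroup representation. The main technical obstacle I expect is controlling the drift term $\bar u\cdot\nabla \bar c$ in the $c$-equation and the chemotaxis term $\nabla\!\cdot(\bar n_i\nabla\bar c)$ simultaneously within $X_T$: the former forces $q>3$ in order that $\bar u\in L^\infty$ combine with $\nabla\bar c\in L^q$, while the latter demands enough regularity of $\nabla\bar c$ to close the $C(\overline\Omega)$-bound on $n_i$; both constraints are precisely met by the choice of $q$ and $\vartheta$ imposed in \eqref{condi;ini1}, and this compatibility is what makes the contraction argument go through.
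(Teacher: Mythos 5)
Your proposal is correct and follows essentially the same route as the paper, which simply invokes the standard contraction mapping argument of \cite{W-2012} (Duhamel fixed point, parabolic and Stokes regularity upgrade, extensibility criterion) and the maximum principle for positivity, exactly as you outline. One small repair: since $\bar n_i$ only lies in $C(\overline\Omega)$, the convection term should be written in divergence form as $\nabla\cdot(\bar n_i\bar u)$ (using $\nabla\cdot\bar u=0$) so that the smoothing estimate for $e^{t\Delta}\nabla\cdot$ applies, rather than keeping $\bar u\cdot\nabla\bar n_i$ literally.
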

\begin{proof}
The proof of local existence of 
classical solutions to \eqref{P} 
is based on a standard contraction mapping argument, 
which can be found in 
\cite{W-2012}. 
Accordingly, the maximum principle is 
applied to yield $n_1,n_2 > 0$ and $c > 0$
in $\Omega\times(0, \tmax)$. 
\end{proof}

Given all $s_0 \in (0, \tmax)$, 
from the regularity properties we see that 
  \[c(\cdot, s_0) \in C^2(\overline{\Omega})
\quad 
\mbox{with}\quad 
\partial_{\nu}c(\cdot, s_0) = 0 
\quad \mbox{on}\ \partial\Omega. 
\]
In particular, there exists 
$M = M(s_0)> 0$ such that 
   $$ 
   \|c(\cdot, s_0)\|_{W^{2, \infty}(\Omega)} \leq M
   $$
(see e.g., \cite{YCJZ-2015}). 

The following lemma is referred to as a variation of 
the maximal Sobolev regularity 
(see \cite[Theorem 3.1]{Hieber-Press_Maximal-Sobolev}), 
which is important to prove Theorem \ref{maintheorem1}. 

\begin{lem}\label{pre2} 
Let $s_0 \in (0, \tmax)$. 
Then for all $p>1$ 
there exists a constant $C_1=C_1(p)>0$ 
such that 
\begin{align*}
  \int_{s_0}^{t}\int_{\Omega}e^{ps}|\Delta c|^p
  &\leq C_1\int_{s_0}^{t}\int_{\Omega}e^{ps}
       |2c-(\alpha n_1+\beta n_2)c - 
       u\cdot \nabla c|^p 
\\&\quad\,
       + C_1 e^{ps_0}
       (\|c(\cdot, s_0)\|_{L^p(\Omega)}^p 
       + \|\Delta c(\cdot, s_0)\|_{L^p(\Omega)}^p)
\end{align*}
holds for all $t\in (s_0,\tmax)$. 
\end{lem}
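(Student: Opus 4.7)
The plan is to reduce the weighted estimate to a standard maximal Sobolev regularity inequality by an exponential change of variables. Set $w(x,s) := e^{s}c(x,s)$ on $\overline{\Omega}\times[s_0,\tmax)$. Then $\partial_\nu w = 0$ on $\partial\Omega$ and a direct computation using the third equation of \eqref{P} gives
\begin{align*}
w_s - \Delta w + w
&= e^{s}(c + c_s - \Delta c + c) \\
&= e^{s}\bigl(\,2c - (\alpha n_1+\beta n_2)c - u\cdot\nabla c\,\bigr).
\end{align*}
The crucial point is that the elliptic operator $-\Delta + I$ appearing on the left is strictly elliptic under Neumann boundary conditions, and the factor $2c$ on the right of the lemma comes precisely from the combination $c_s + c$ produced by differentiating $e^{s}c$ and then adding the $+w$ term needed to turn $-\Delta$ into a dissipative generator.

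Next I would invoke the Hieber--Prüss-type maximal Sobolev regularity on the cylinder $\Omega\times(s_0,t)$ for the linear Neumann problem $w_s - \Delta w + w = F$ (cf.\ \cite[Theorem 3.1]{Hieber-Press_Maximal-Sobolev}). Since this result is classically formulated with vanishing initial data, I would split $w = w_1+w_2$, where $w_1$ solves the homogeneous problem $(w_1)_s-\Delta w_1+w_1=0$ with initial value $w_1(\cdot,s_0)=w(\cdot,s_0)=e^{s_0}c(\cdot,s_0)$, and $w_2$ solves the inhomogeneous problem with zero initial value and right-hand side $F$. For $w_2$ the maximal regularity estimate yields
\[
\int_{s_0}^{t}\!\!\int_{\Omega}|\Delta w_2|^{p} \le C\int_{s_0}^{t}\!\!\int_{\Omega}|F|^{p},
\]
while for $w_1$ the parabolic smoothing estimates together with the boundedness of the semigroup generated by $-\Delta+I$ on $L^p(\Omega)$ give
\[
\int_{s_0}^{t}\!\!\int_{\Omega}|\Delta w_1|^{p} \le C\bigl(\|w(\cdot,s_0)\|_{L^p(\Omega)}^{p} + \|\Delta w(\cdot,s_0)\|_{L^p(\Omega)}^{p}\bigr),
\]
which upon recalling $w(\cdot,s_0)=e^{s_0}c(\cdot,s_0)$ produces the factor $e^{ps_0}$ in front of the initial-data norms.

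Finally I would combine both estimates via $|\Delta w|^{p}\le 2^{p-1}(|\Delta w_1|^{p}+|\Delta w_2|^{p})$ and substitute back using $\Delta w = e^{s}\Delta c$ and $F = e^{s}[\,2c-(\alpha n_1+\beta n_2)c-u\cdot\nabla c\,]$, which converts $|\Delta w|^{p}$ into $e^{ps}|\Delta c|^{p}$ and matches the integrand on the right-hand side of the claimed inequality. Absorbing the constants into a single $C_1=C_1(p)>0$ yields the desired estimate.

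The main obstacle I anticipate is the justification of the initial-data term. The standard maximal Sobolev regularity is usually stated on $(0,T)$ with vanishing initial trace, so care is needed either to perform the splitting above or to invoke a version of the theorem that admits initial data in a suitable real interpolation space and to show that the regularity $c(\cdot,s_0)\in W^{2,\infty}(\Omega)$ noted right after Lemma~\ref{pre1} places $w(\cdot,s_0)$ into that space with norm controlled by $\|c(\cdot,s_0)\|_{L^p(\Omega)}+\|\Delta c(\cdot,s_0)\|_{L^p(\Omega)}$. Once this point is settled, the rest is a routine transformation.
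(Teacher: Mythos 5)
Your proof follows essentially the same route as the paper: the transformation $\widetilde{c}(\cdot,s)=e^{s}c(\cdot,s)$ turns the third equation into $\widetilde{c}_t=(\Delta-1)\widetilde{c}+e^{s}\bigl(2c-(\alpha n_1+\beta n_2)c-u\cdot\nabla c\bigr)$ with Neumann data, and the Hieber--Pr\"uss maximal Sobolev regularity on $(s_0,t)$ then yields the stated inequality, the initial-data terms coming from $\widetilde{c}(\cdot,s_0)=e^{s_0}c(\cdot,s_0)\in W^{2,p}(\Omega)$. Your splitting into a homogeneous part and a zero-initial-data part is merely an explicit way of treating the nonzero initial trace that the paper subsumes in the cited theorem; just note that the $t$-independence of the constant uses the exponential decay coming from the shift $-\Delta+1$ (and the interval-independent maximal regularity constant it provides), not mere boundedness of the semigroup.
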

\begin{proof}
 Let $s_0\in (0,\tmax)$ and let $t\in (s_0,\tmax)$. 
 We rewrite the third equation as
 \[
 c_t = (\Delta -1) c - c + \bigl(2c -(\alpha n_1+\beta n_2)c\bigr) 
 - u\cdot \nabla c, 
 \] 
and use the transformation 
$\widetilde{c}(\cdot, s)=e^s c(\cdot, s)$, $s\in (s_0,t)$. 
Then $\widetilde{c}$ satisfies 
\begin{align*}
  \begin{cases}
   \widetilde{c}_t = (\Delta -1) \widetilde{c} + f, 
   & x\in\Omega,\ s\in(s_0,t), 
  \\
   \partial_\nu \widetilde{c} = 0,
   & x\in\pa\Omega,\ s\in (s_0,t), 
  \\ 
   \widetilde{c}(\cdot,s_0)=e^{s_0}c(\cdot,s_0)
   \in W^{2,p}(\Omega), 
   & x\in\Omega, 
   \end{cases}
\end{align*} 
where 
\begin{align*}
 f := e^s(2c -(\alpha n_1 + \beta n_2)c 
          - u\cdot \nabla c) \in L^p(s_0,t;L^p(\Omega)). 
\end{align*} 
Therefore an application of the maximal Sobolev regularity 
\cite[Theorem 3.1]{Hieber-Press_Maximal-Sobolev} to 
$\widetilde{c}$ implies this lemma. 
\end{proof}

\vspace{10pt}

\section{Boundedness. Proof of Theorem \ref{maintheorem1}}

In this section we will prove 
Theorem \ref{maintheorem1} 
by preparing a series of lemmas.


\begin{lem}\label{pote1}
There exists a constant $C_2 > 0$ such that 
     $$
     \int_{\Omega} n_i(\cdot, t) \leq C_2
     $$
for all $t \in (0, \tmax)$ for $i = 1,2$.
\end{lem}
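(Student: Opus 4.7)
The plan is to test the first two equations of \eqref{P} against the constant function $1$ and exploit the convection-free structure together with the logistic absorption.

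First, I would integrate the $n_1$-equation over $\Omega$. Since $\nabla\cdot u=0$ in $\Omega$ and $u=0$ on $\partial\Omega$, an integration by parts gives $\int_\Omega u\cdot\nabla n_1 = -\int_\Omega (\nabla\cdot u)\, n_1 + \int_{\partial\Omega} n_1 (u\cdot\nu) = 0$. The Neumann boundary conditions $\partial_\nu n_1 = \partial_\nu c = 0$ make the diffusion and chemotaxis terms vanish after integration. Using also that $n_1, n_2 \ge 0$ (from Lemma \ref{pre1}) and $a_1 \ge 0$, what remains is
\begin{align*}
\frac{d}{dt}\int_\Omega n_1 = \mu_1 \int_\Omega n_1 - \mu_1 \int_\Omega n_1^2 - \mu_1 a_1 \int_\Omega n_1 n_2 \le \mu_1 \int_\Omega n_1 - \mu_1 \int_\Omega n_1^2.
\end{align*}

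Next I would invoke the Cauchy--Schwarz inequality in the form $\left(\int_\Omega n_1\right)^2 \le |\Omega|\int_\Omega n_1^2$ to close the bound in terms of $y(t) := \int_\Omega n_1(\cdot,t)$, obtaining the logistic-type ODE
\begin{align*}
y'(t) \le \mu_1 y(t) - \frac{\mu_1}{|\Omega|}\, y(t)^2.
\end{align*}
A standard ODE comparison argument then yields $y(t) \le \max\{y(0),\, |\Omega|\}$ for all $t \in (0,\tmax)$, because any supersolution to $Y' = \mu_1 Y(1 - Y/|\Omega|)$ starting below $|\Omega|$ stays below $|\Omega|$, and starting above $|\Omega|$ is monotone decreasing. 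Since $n_{1,0} \in C(\overline{\Omega})$ by \eqref{condi;ini1}, $y(0)$ is finite.

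The same reasoning applied verbatim to the $n_2$-equation produces the analogous bound. Taking $C_2 := \max\bigl\{|\Omega|,\, \int_\Omega n_{1,0},\, \int_\Omega n_{2,0}\bigr\}$ yields the claim. There is no real obstacle here; the only small point to be careful about is to verify that $\int_\Omega u\cdot\nabla n_i$ vanishes despite the fluid coupling, which is precisely where the divergence-free condition and the no-slip boundary condition on $u$ enter.
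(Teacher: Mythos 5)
Your proof is correct and is exactly the standard argument the paper invokes by citing \cite[Lemma 3.1]{HKMY_1}: integrate the species equations, use $\nabla\cdot u=0$ with the no-slip and Neumann conditions to kill the transport, diffusion and chemotaxis terms, drop the nonnegative competition term, and close via Cauchy--Schwarz and an ODE comparison for the resulting logistic inequality. No discrepancies to report.
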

\begin{proof}
The same argument as in the proof of \cite[Lemma 3.1]{HKMY_1} 
implies this lemma.
\end{proof}
 \begin{lem}\label{lem;Linf;c}
The function $t \mapsto \lp{\infty}{c(\cdot, t)}$ is nonincreasing. 
In particular, 
    \begin{equation*}
    \|c(\cdot, t)\|_{L^{\infty}(\Omega)} \le \|c_0\|_{L^{\infty}(\Omega)} 
    \end{equation*}
holds for all $t \in (0, \tmax)$. 
 \end{lem}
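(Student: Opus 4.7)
The plan is to prove that $t\mapsto\|c(\cdot,t)\|_{L^p(\Omega)}$ is nonincreasing for every $p\ge 2$, and then pass to the limit $p\to\infty$. This route avoids invoking a parabolic maximum principle for an equation with convection and with a spatially variable zeroth-order coefficient, and it sits naturally alongside the other $L^p$-type tests used throughout the paper.

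The first step is to multiply the third equation of \eqref{P} by $pc^{p-1}$ (which is admissible since $c>0$ in $\Omega\times(0,\tmax)$ by Lemma \ref{pre1}) and integrate over $\Omega$. Each term on the right-hand side has the correct sign. The convection contribution $\int_\Omega (u\cdot\nabla c)\,pc^{p-1}=\int_\Omega u\cdot\nabla(c^p)$ vanishes after integration by parts, using $\nabla\cdot u=0$ in $\Omega$ together with $u=0$ on $\partial\Omega$. The diffusion term, integrated by parts with $\partial_\nu c=0$, becomes $-p(p-1)\int_\Omega c^{p-2}|\nabla c|^2\le 0$. Finally, the absorption term $-p\int_\Omega(\alpha n_1+\beta n_2)c^p$ is nonpositive because $\alpha,\beta>0$ and $n_1,n_2\ge 0$ by Lemma \ref{pre1}. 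Altogether,
$$\frac{d}{dt}\int_\Omega c^p\le 0,$$
so that for any $0<s<t<\tmax$ one obtains $\|c(\cdot,t)\|_{L^p(\Omega)}\le \|c(\cdot,s)\|_{L^p(\Omega)}$.

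The second step is to let $p\to\infty$. Since $\Omega$ is bounded and $c(\cdot,s),c(\cdot,t)\in L^\infty(\Omega)$ by the regularity supplied by Lemma \ref{pre1}, the standard fact $\|v\|_{L^p(\Omega)}\to\|v\|_{L^\infty(\Omega)}$ as $p\to\infty$ transfers the inequality to the $L^\infty$ norm, giving monotonicity of $t\mapsto \|c(\cdot,t)\|_{L^\infty(\Omega)}$. Taking $s\searrow 0$ and using continuity of $c$ up to $t=0$ yields the quantitative bound $\|c(\cdot,t)\|_{L^\infty(\Omega)}\le \|c_0\|_{L^\infty(\Omega)}$.

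I do not foresee any serious obstacle. Once positivity of $c$ and the homogeneous Neumann condition are in place, the sign structure makes the energy identity immediate, and the limit $p\to\infty$ is routine. The only small point to verify is the legitimacy of the integration by parts for general $p\ge 2$, which follows from the regularity $c\in C^{2,1}(\overline\Omega\times(0,\tmax))$ furnished by Lemma \ref{pre1}.
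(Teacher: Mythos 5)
Your argument is correct, but it takes a different route from the paper: the paper disposes of this lemma in one line by applying the parabolic maximum principle to the third equation (the drift $u$ is divergence free, the zeroth-order coefficient $-(\alpha n_1+\beta n_2)\le 0$, and the boundary condition is homogeneous Neumann, so the constant $\|c_0\|_{L^\infty(\Omega)}$ is a supersolution), whereas you run an $L^p$ energy estimate and send $p\to\infty$. Your computation is sound: testing with $pc^{p-1}$ is legitimate thanks to $c\in C^{2,1}(\overline{\Omega}\times(0,\tmax))$ and $c>0$ from Lemma \ref{pre1}, the convective term vanishes because $\nabla\cdot u=0$ and $u=0$ on $\partial\Omega$, the diffusive and absorption terms are nonpositive, and the passage $\|c(\cdot,t)\|_{L^p(\Omega)}\to\|c(\cdot,t)\|_{L^\infty(\Omega)}$ on the bounded domain, together with continuity of $c$ up to $t=0$, yields both the monotonicity and the bound by $\|c_0\|_{L^\infty(\Omega)}$. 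What your route buys is self-containedness: it avoids citing a maximum/comparison principle for an equation with convection and a variable absorption coefficient, at the cost of a slightly longer argument and the limit $p\to\infty$; the paper's route is shorter and also delivers pointwise information (e.g.\ it is the same principle that gives positivity of $c$), but relies on the classical comparison machinery. Either proof is acceptable here.
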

\begin{proof}
We can prove this lemma by applying the maximum principle 
to the third equation in \eqref{P}.
\end{proof}


\begin{lem}\label{pote3}
For $r \in (1, 3)$ there exists a constant 
$C_3=C_3(r) > 0$ such that 
     $$
     \|u(\cdot, t)\|_{L^r(\Omega)} \leq C_3
     $$
for all $t \in (0, \tmax)$.
\end{lem}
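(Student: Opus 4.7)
The plan is to apply the Helmholtz projection $\mathcal{P}$ to the fourth equation in \eqref{P} (with $\kappa=0$) and use smoothing properties of the Stokes semigroup. Writing $h(\cdot,s) := \mathcal{P}[(\gamma n_1(\cdot,s) + \delta n_2(\cdot,s))\nabla\phi]$, the projected equation
\begin{align*}
 u_t + Au = h, \quad u(\cdot,0)=u_0,
\end{align*}
together with Duhamel's formula gives, for $t\in(0,\tmax)$,
\begin{align*}
 u(\cdot,t) = e^{-tA}u_0 + \int_0^t e^{-(t-s)A} h(\cdot,s)\,ds.
\end{align*}

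First, since $u_0 \in D(A^{\vartheta})$ with $\vartheta > \frac{3}{4}$, Sobolev embedding yields $u_0 \in L^\infty(\Omega)\subset L^r(\Omega)$. Next, the standard $L^p$--$L^q$ estimates for the Dirichlet Stokes semigroup on a bounded smooth domain (see, e.g., the results of Giga) provide a constant $\lambda>0$ and $C>0$ with
\begin{align*}
 \|e^{-\tau A} w\|_{L^r(\Omega)}
 \le C\,\tau^{-\frac{3}{2}\left(\frac{1}{q}-\frac{1}{r}\right)} e^{-\lambda\tau} \|w\|_{L^q(\Omega)}
 \quad (1\le q\le r<\infty,\ \tau>0)
\end{align*}
on the solenoidal subspace. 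I would take $q=1$ and exploit Lemma~3.1 together with $\phi\in C^{1+\eta}(\overline\Omega)$ to obtain $\|h(\cdot,s)\|_{L^1(\Omega)}\le C\,\|\nabla\phi\|_{L^\infty(\Omega)}(\|n_1(\cdot,s)\|_{L^1(\Omega)}+\|n_2(\cdot,s)\|_{L^1(\Omega)})\le C'$, uniformly in $s\in(0,\tmax)$.

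Combining these ingredients yields
\begin{align*}
 \|u(\cdot,t)\|_{L^r(\Omega)}
 \le C\,e^{-\lambda t}\|u_0\|_{L^r(\Omega)}
 + C\,C'\int_0^t (t-s)^{-\frac{3}{2}\left(1-\frac{1}{r}\right)} e^{-\lambda(t-s)}\,ds.
\end{align*}
The decisive observation is that for $r\in(1,3)$ the exponent satisfies $\frac{3}{2}(1-\frac{1}{r}) < 1$, so substituting $\tau=t-s$ gives
\begin{align*}
 \int_0^t (t-s)^{-\frac{3}{2}\left(1-\frac{1}{r}\right)} e^{-\lambda(t-s)}\,ds
 \le \int_0^\infty \tau^{-\frac{3}{2}\left(1-\frac{1}{r}\right)} e^{-\lambda\tau}\,d\tau
 =: C_4(r) < \infty,
\end{align*}
which is finite precisely because the restriction $r<3$ makes the singularity at $\tau=0$ integrable and the exponential ensures convergence at infinity. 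Setting $C_3(r)$ as the sum of the two contributions yields the claimed uniform bound.

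The only delicate point is ensuring that the semigroup bound is uniform in time, i.e.\ that one may retain the decay factor $e^{-\lambda\tau}$; this is where the Dirichlet condition $u=0$ on $\partial\Omega$ (giving a spectral gap for $A$) is essential. I do not anticipate any other genuine obstacle: the argument is a classical heat-kernel-type estimate once the $L^1$ bound for $n_1+n_2$ from Lemma~3.1 is in place, and it is precisely the threshold $r=3$ that arises from the scaling $\frac{3}{2}(1-\frac{1}{r})=1$.
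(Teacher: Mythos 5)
Your argument is correct and is essentially the paper's own: the paper just combines the mass bound of Lemma \ref{pote1} with the known Stokes-semigroup smoothing estimates and defers the details to the cited reference (\cite[Corollary 3.4]{W-2015}), which runs exactly along your Duhamel representation with $L^1$--$L^r$ smoothing and the threshold $r<3$ arising from $\frac{3}{2}\bigl(1-\frac{1}{r}\bigr)<1$. The only point worth spelling out is the $q=1$ estimate for $e^{-\tau A}\mathcal{P}$, since the Helmholtz projection is not bounded on $L^1(\Omega)$; this is standard (e.g.\ by duality, using $\|e^{-\tau A}g\|_{L^\infty(\Omega)}\le C\tau^{-\frac{3}{2r'}}\|g\|_{L^{r'}(\Omega)}$ on solenoidal $g$) and is taken care of in the cited result.
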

\begin{proof}
From the well-known 
Neumann heat semigroup estimates together with 
Lemma \ref{pote1} we can obtain 
the $L^r$-estimate for $u$ with $r\in (1,3)$ 
(for more details, see \cite[Corollary 3.4]{W-2015}). 
\end{proof}

Now we fix $s_0 \in (0, \tmax) \cap (0, 1]$.
The proofs of the following 
two lemmas are based on the methods 
in \cite[Lemma 3.1]{YCJZ-2015}. 

\begin{lem}\label{pote4}
For all $p > 1$, $\ep > 0$ and $\ell > 0$ 
there exists a constant $C_4=C_4(p) > 0$ 
such that
 \begin{align*}
     &\frac{1}{p}\int_{\Omega}n_1^p(\cdot, t) + \frac{1}{p}\int_{\Omega}n_2^p(\cdot, t) \\ 
     &\leq - (\mu - \ep - \ell)e^{-(p+1)t}\int_{s_0}^{t}e^{(p+1)s}
                                 \Bigl(\int_{\Omega}n_1^{p+1} + \int_{\Omega}n_2^{p+1} \Bigr) \\ 
              &\quad\,+ C_4\ell^{-p}\chi^{p+1}e^{-(p+1)t}
                               \int_{s_0}^{t}e^{(p+1)s}\int_{\Omega}|\Delta c|^{p+1} 
                      + C_4
     \end{align*}
for all $t \in (s_0, \tmax)$, 
where $\mu = \min\{\mu_1,\mu_2\}$, 
$\chi = \max\{\chi_1, \chi_2\}$.
\end{lem}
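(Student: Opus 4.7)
The plan is to derive a differential inequality for $\frac{1}{p}\int_\Omega(n_1^p+n_2^p)$ and then integrate it against the weight $e^{(p+1)t}$ to obtain the stated form. The argument is a standard $L^p$-testing combined with two Young's inequalities, so the main obstacle will be bookkeeping: arranging the constants so that exactly $\mu - \ep - \ell$ appears in front of the $\int n^{p+1}$ term and exactly $\ell^{-p}\chi^{p+1}$ in front of the $\int|\Delta c|^{p+1}$ term.

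First, I would test the first (resp.\ second) equation of \eqref{P} by $n_1^{p-1}$ (resp.\ $n_2^{p-1}$) and integrate over $\Omega$. The convective term vanishes because $\nabla\cdot u = 0$: indeed $\int_\Omega u\cdot\nabla n_i \, n_i^{p-1} = \frac{1}{p}\int_\Omega u\cdot\nabla n_i^p = 0$. The diffusion term contributes $-(p-1)\int_\Omega n_i^{p-2}|\nabla n_i|^2\le 0$, which I would simply discard. Two integrations by parts on the chemotaxis term yield
\[
-\chi_i\int_\Omega n_i^{p-1}\nabla\cdot(n_i\nabla c)
= -\tfrac{\chi_i(p-1)}{p}\int_\Omega n_i^p\,\Delta c.
\]
Dropping the non-positive contribution $-\mu_i a_i\int_\Omega n_i^p n_{3-i}\le 0$ from the logistic term, I arrive at
\[
\tfrac{1}{p}\tfrac{d}{dt}\int_\Omega n_i^p \le -\tfrac{\chi_i(p-1)}{p}\int_\Omega n_i^p\,\Delta c
+ \mu_i\int_\Omega n_i^p - \mu_i\int_\Omega n_i^{p+1}.
\]

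Next, I apply Young's inequality in two places. To the chemotactic cross-term I use the conjugate exponents $\tfrac{p+1}{p}$ and $p+1$ with a parameter calibrated so that the $n_i^{p+1}$-coefficient equals $\ell$ and the $|\Delta c|^{p+1}$-coefficient is $C(p)\ell^{-p}\chi_i^{p+1}$. To the term $\mu_i\int_\Omega n_i^p$ I apply Young's with a small parameter to absorb it as $\ep\int_\Omega n_i^{p+1}$ plus a constant depending on $|\Omega|$, $p$, $\mu_i$, $\ep$. Adding the two resulting inequalities and using $\chi_i\le\chi$, $\mu_i\ge\mu$ gives
\[
\tfrac{1}{p}\tfrac{d}{dt}\int_\Omega (n_1^p+n_2^p)
\le -(\mu-\ep-\ell)\int_\Omega (n_1^{p+1}+n_2^{p+1}) + \widetilde C\ell^{-p}\chi^{p+1}\int_\Omega|\Delta c|^{p+1} + \widetilde C
\]
for some $\widetilde C=\widetilde C(p)>0$ (after a further Young's to replace the small linear-in-$n_i^p$ contributions generated by the $(p+1)$-shift below).

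Finally, to produce the $e^{-(p+1)t}\int_{s_0}^t e^{(p+1)s}\,ds$ structure, I would add $\tfrac{p+1}{p}\int_\Omega(n_1^p+n_2^p)$ to both sides (once more applying Young's to dominate this by a further $\ep$-fraction of $\int_\Omega(n_1^{p+1}+n_2^{p+1})$ plus a constant, which can be absorbed by a slight redefinition of $\ep$) to get
\[
\tfrac{d}{dt}\!\left[e^{(p+1)t}\tfrac{1}{p}\!\int_\Omega(n_1^p+n_2^p)\right]
\le -(p+1)(\mu-\ep-\ell)e^{(p+1)t}\!\int_\Omega(n_1^{p+1}+n_2^{p+1})
+ C(p)\ell^{-p}\chi^{p+1}e^{(p+1)t}\!\int_\Omega|\Delta c|^{p+1}+C(p)e^{(p+1)t}.
\]
Integrating from $s_0$ to $t$, using the uniform bound for $\int_\Omega n_i^p(\cdot,s_0)$ (which follows from $n_i(\cdot,s_0)\in C(\overline\Omega)$ at the fixed time $s_0$) to control the initial contribution, and finally dividing through by $e^{(p+1)t}$ yields the asserted estimate after relabeling the constant. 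The most delicate bit will be ensuring the constant multiplying $\int|\Delta c|^{p+1}$ depends on $\chi$ only through $\chi^{p+1}$ and on $\ell$ only through $\ell^{-p}$, which amounts to using a single optimally tuned Young's inequality for the chemotaxis term.
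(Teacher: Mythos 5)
Your proposal is correct and follows essentially the same route as the paper: test with $n_i^{p-1}$, drop the diffusion and competition cross terms, apply Young's inequality once to produce $\ell\int n_i^{p+1}+C\ell^{-p}\chi_i^{p+1}\int|\Delta c|^{p+1}$ and once to absorb the linear $\int n_i^p$ terms (the paper simply bundles the added $\frac{p+1}{p}\int n_i^p$ into a single Young step with $\ep$, which is equivalent to your "redefine $\ep$" device), then integrate the weighted quantity $e^{(p+1)t}\frac1p\int n_i^p$ from $s_0$ to $t$ and bound the $s_0$-term by a constant. No gaps.
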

\begin{proof}
Let $p > 1$.  
Multiplying the first equation in { \eqref{P}} by $n_1^{p-1}$ 
and integrating it over $\Omega$, we see that 
   \begin{align*}
   \frac{1}{p}\frac{d}{dt}\int_{\Omega}n_1^p  
   &= -\frac{1}{p}\int_{\Omega} u \cdot \nabla n_1^p 
        + \int_{\Omega}n_1^{p-1}\Delta n_1 
        + \chi_1\frac{p-1}{p}\int_{\Omega}\nabla n_1^p\cdot\nabla c \\ \notag
        &\quad\,+ \mu_1\int_{\Omega}n_1^p - \mu_1\int_{\Omega}n_1^{p+1} 
        - a_1\mu_1\int_{\Omega}n_1^p n_2.  
   \end{align*} 
Noting from $\nabla \cdot u = 0$ in $\Omega\times(0, \tmax)$ that 
$\int_{\Omega} u \cdot \nabla n_1^p = -\int_{\Omega} (\nabla \cdot u)n_1^p = 0$, 
we obtain from integration by parts and nonnegativity of $n_1$, $n_2$ that 
     \begin{align}\label{pote1.1}
   \frac{1}{p}\frac{d}{dt}\int_{\Omega}n_1^p  
   &= - (p-1)\int_{\Omega}n_1^{p-2}|\nabla n_1|^2 
        -\chi_1\frac{p-1}{p}\int_{\Omega}n_1^p\Delta c 
            + \mu_1\int_{\Omega}n_1^p - \mu_1\int_{\Omega}n_1^{p+1} 
         \\ \notag
              &\quad\,- a_1\mu_1\int_{\Omega}n_1^p n_2
   \\ \notag 
   &\leq -\chi_1\frac{p-1}{p}\int_{\Omega}n_1^p\Delta c 
            + \mu_1\int_{\Omega}n_1^p - \mu_1\int_{\Omega}n_1^{p+1} 
   \\ \notag
   &= -\frac{p+1}{p}\int_{\Omega}n_1^p 
        -\chi_1\frac{p-1}{p}\int_{\Omega}n_1^p\Delta c 
        + \Bigl(\mu_1 + \frac{p+1}{p} \Bigr)\int_{\Omega}n_1^p 
        - \mu_1\int_{\Omega}n_1^{p+1}.
       \end{align}%
Now we let $\ep > 0$ and $\ell >0$. 
By the Young inequality there 
exists a constant $C_5 = C_5(\mu_1, \ep, p) > 0$ such that 
      \begin{equation}\label{pote1.2}
      \Bigl(\mu_1 + \frac{p+1}{p} \Bigr)\int_{\Omega}n_1^p 
      \leq \ep\int_{\Omega}n_1^{p+1} + C_5.
      \end{equation}
Moreover, the second term on the right-hand side of 
\eqref{pote1.1} can be estimated as 
     \begin{equation}\label{pote1.3}
     -\chi_1\frac{p-1}{p}\int_{\Omega}n_1^p\Delta c 
     \leq \chi_1\int_{\Omega}n_1^p|\Delta c| 
     \leq \ell\int_{\Omega}n_1^{p+1} + C_6\ell^{-p}\chi_1^{p+1}\int_{\Omega}|\Delta c|^{p+1} 
     \end{equation}
with some $C_6 = C_6(p) > 0$. 
Hence we derive from \eqref{pote1.1}, \eqref{pote1.2} and \eqref{pote1.3} that 
      \begin{align*}
      \frac{1}{p}\frac{d}{dt}\int_{\Omega}n_1^p + \frac{p+1}{p}\int_{\Omega}n_1^p
      &\leq - (\mu_1 - \ep - \ell)
      \int_{\Omega}n_1^{p+1} \\ \notag
      &\quad\,+ C_6\ell^{-p}\chi_1^{p+1}\int_{\Omega}|\Delta c|^{p+1} 
      + C_5. 
      \end{align*} 
Therefore there exists 
$C_7 = C_7(\mu_1, \ep, p, |\Omega|, s_0) > 0$ such that  
     \begin{align}\label{pote1.5}
     \frac{1}{p}\int_{\Omega}n_1^p(\cdot, t) 
     &\leq e^{-(p+1)(t-s_0)}\frac{1}{p}\int_{\Omega}n_1^p(\cdot, s_0) 
     - (\mu_1 - \ep - \ell)\int_{s_0}^{t}
     e^{-(p+1)(t-s)}\int_{\Omega}n_1^{p+1}
     \\ \notag
        &\quad\,+ C_6\ell^{-p}\chi_1^{p+1}
                               \int_{s_0}^{t}e^{-(p+1)(t-s)}\int_{\Omega}|\Delta c|^{p+1} 
              + C_5\int_{s_0}^{t}e^{-(p+1)(t-s)} \\ \notag 
     &\leq - (\mu_1 - \ep - \ell)e^{-(p+1)t}\int_{s_0}^{t}e^{(p+1)s}\int_{\Omega}n_1^{p+1} \\ \notag
              &\quad\,+ C_6\ell^{-p}\chi_1^{p+1}e^{-(p+1)t}
                               \int_{s_0}^{t}e^{(p+1)s}\int_{\Omega}|\Delta c|^{p+1} 
                      + C_7
     \end{align}
for each $t \in (s_0, \tmax)$. 
Similarly, we see that 
     \begin{align}\label{pote1.6}
     \frac{1}{p}\int_{\Omega}n_2^p(\cdot, t) 
     &\leq - (\mu_2 - \ep - \ell)e^{-(p+1)t}\int_{s_0}^{t}e^{(p+1)s}\int_{\Omega}n_2^{p+1} \\ \notag
              &\quad\,+ C_6\ell^{-p}\chi_2^{p+1}e^{-(p+1)t}
                               \int_{s_0}^{t}e^{(p+1)s}\int_{\Omega}|\Delta c|^{p+1} 
                      + C_8
     \end{align}
with some $C_8 = C_8(\mu_2, \ep, p, |\Omega|, s_0) > 0$. 
Thus from \eqref{pote1.5} 
and \eqref{pote1.6} we have 
that there exists 
$C_9 = C_9(\mu, \ep, p, |\Omega|, s_0) > 0$ such that 
      \begin{align*}
     &\frac{1}{p}\int_{\Omega}n_1^p(\cdot, t) + \frac{1}{p}\int_{\Omega}n_2^p(\cdot, t) \\ \notag
     &\leq - (\mu - \ep - \ell)e^{-(p+1)t}\int_{s_0}^{t}e^{(p+1)s}
                                 \Bigl(\int_{\Omega}n_1^{p+1} + \int_{\Omega}n_2^{p+1} \Bigr) \\ \notag
              &\quad\,+ 2C_6\ell^{-p}\chi^{p+1}e^{-(p+1)t}
                               \int_{s_0}^{t}e^{(p+1)s}\int_{\Omega}|\Delta c|^{p+1} 
                      + C_9, 
     \end{align*}
where $\mu = \min\{\mu_1, \mu_2\}$ 
and 
$\chi = \max\{\chi_1, \chi_2\}$.  
\end{proof}

\begin{lem}\label{pote5}
For all $p \in (1, 2)$ there exists a constant 
$C_{10}=C_{10}(p) > 0$ such that 
 \begin{align*}
     &\int_{s_0}^{t}e^{(p+1)s}\int_{\Omega}|\Delta c|^{p+1} \\ 
      &\leq C_{10}\int_{s_0}^{t}e^{(p+1)s}
     (\|n_1\|_{L^{p+1}(\Omega)}^{p+1} + 
     \|n_2\|_{L^{p+1}(\Omega)}^{p+1}) 
      + C_{10}\int_{s_0}^{t}
      e^{(p+1)s}
      \|Au\|_{L^2(\Omega)}^2\,{ ds} 
      \\ 
      &\quad\,+ C_{10}e^{(p+1)t} + C_{10}
     \end{align*}
for all $t \in (s_0, \tmax)$.
\end{lem}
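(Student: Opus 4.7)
The plan is to apply the maximal Sobolev regularity of Lemma \ref{pre2} (with $p+1$ in place of $p$) to the third equation of \eqref{P}, and then to estimate the three contributions to the forcing $|2c-(\alpha n_1+\beta n_2)c-u\cdot\nabla c|^{p+1}$ separately. After splitting via $(a+b+c)^{p+1}\le 3^{p}(a^{p+1}+b^{p+1}+c^{p+1})$, the piece $|2c|^{p+1}$ is bounded by a constant thanks to Lemma \ref{lem;Linf;c} and, after time integration, contributes the $C_{10}e^{(p+1)t}$ term; the piece $|(\alpha n_1+\beta n_2)c|^{p+1}\le C(n_1^{p+1}+n_2^{p+1})$ directly supplies the $\|n_i\|_{L^{p+1}(\Omega)}^{p+1}$ terms; and the initial-data term in Lemma \ref{pre2} is controlled by $\|c(\cdot,s_0)\|_{W^{2,p+1}(\Omega)}$, which is finite by the regularity statement following Lemma \ref{pre1}, supplying the final constant $C_{10}$.

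The only genuinely difficult contribution is the convective term. I would use H\"older's inequality in the form $\int_\Omega|u\cdot\nabla c|^{p+1}\le\|u\|_{L^{2(p+1)}(\Omega)}^{p+1}\|\nabla c\|_{L^{2(p+1)}(\Omega)}^{p+1}$ (note $\tfrac{1}{2(p+1)}+\tfrac{1}{2(p+1)}=\tfrac{1}{p+1}$) and then interpolate each factor via Gagliardo--Nirenberg. Taking $s=\tfrac{3p}{2}\in(\tfrac{3}{2},3)$ for $p\in(1,2)$, so that Lemma \ref{pote3} applies and $\|u\|_{L^{s}(\Omega)}$ is uniformly bounded, a direct exponent count in three dimensions gives the Gagliardo--Nirenberg coefficient $\theta=\tfrac{1}{p+1}$ on the $W^{2,2}$-norm, hence, combined with Stokes regularity $\|D^2 u\|_{L^2}\le C(\|Au\|_{L^2}+\|u\|_{L^2})$,
\begin{align*}
\|u\|_{L^{2(p+1)}(\Omega)}^{p+1}\le C\bigl(\|Au\|_{L^2(\Omega)}+1\bigr).
\end{align*}
Similarly, Gagliardo--Nirenberg interpolation between $\|D^2c\|_{L^{p+1}}$ and $\|c\|_{L^\infty}$ produces the coefficient $\eta=\tfrac{1}{2}$, which, together with elliptic Neumann regularity $\|D^2c\|_{L^{p+1}}\le C(\|\Delta c\|_{L^{p+1}}+\|c\|_{L^{p+1}})$ and Lemma \ref{lem;Linf;c}, gives
\begin{align*}
\|\nabla c\|_{L^{2(p+1)}(\Omega)}^{p+1}\le C\bigl(\|\Delta c\|_{L^{p+1}(\Omega)}^{(p+1)/2}+1\bigr).
\end{align*}

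Multiplying these two bounds and applying Young's inequality with a small parameter $\delta>0$ then yields
\begin{align*}
\int_\Omega|u\cdot\nabla c|^{p+1}\le\delta\int_\Omega|\Delta c|^{p+1}+C_\delta\bigl(\|Au\|_{L^2(\Omega)}^2+1\bigr).
\end{align*}
Multiplying by $e^{(p+1)s}$, integrating over $(s_0,t)$, substituting all three contributions into the bound from Lemma \ref{pre2}, and choosing $\delta$ small enough that the resulting $\delta$-term can be absorbed into the left-hand side produces the claim. The main obstacle is the delicate balancing of Gagliardo--Nirenberg exponents: the choice $s=\tfrac{3p}{2}$ is what forces $\theta(p+1)=1$ exactly, while $\eta=\tfrac{1}{2}$ is what makes Young's inequality applied to $\|Au\|_{L^2}^{1}\|\Delta c\|_{L^{p+1}}^{(p+1)/2}$ produce precisely the quadratic $\|Au\|_{L^2}^{2}$ and the full power $\|\Delta c\|_{L^{p+1}}^{p+1}$ on the two sides of the claim. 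The restriction $p\in(1,2)$ enters exactly through the requirement $s=\tfrac{3p}{2}<3$ needed for Lemma \ref{pote3}.
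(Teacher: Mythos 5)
Your argument is correct and follows essentially the same route as the paper: apply Lemma \ref{pre2}, use the $L^\infty$ bound of Lemma \ref{lem;Linf;c} for the zero-order terms, split off $u\cdot\nabla c$ by H\"older, interpolate both factors by Gagliardo--Nirenberg against $\|\Delta c\|_{L^{p+1}}$ and $\|Au\|_{L^2}$ (using the uniform $L^r$ bound of Lemma \ref{pote3}), and finish with Young's inequality plus absorption of the $\|\Delta c\|_{L^{p+1}}^{p+1}$ contribution into the left-hand side. The only real difference is your concrete exponent choice ($L^{2(p+1)}\times L^{2(p+1)}$ in H\"older and $s=\tfrac{3p}{2}$), which replaces the paper's free parameters $\theta\in(1,2)$, $r\in(1,3)$ and condition \eqref{keypoint} and puts you at the borderline Gagliardo--Nirenberg exponent $\eta=\tfrac12$ (the paper keeps $a>\tfrac12$ by taking $\theta'>2$); since the endpoint inequality $\|\nabla c\|_{L^{2(p+1)}(\Omega)}^2\le C\|c\|_{W^{2,p+1}(\Omega)}\|c\|_{L^\infty(\Omega)}$ is valid on smooth bounded domains, your proof goes through.
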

\begin{proof}
Fix $\theta \in (1, 2)$ and put $\theta' = \frac{\theta}{\theta - 1}$.
We derive from Lemma \ref{pre2} that 
      \begin{align}\label{pote1.8}
      \int_{s_0}^{t}e^{(p+1)s}\int_{\Omega}|\Delta c|^{p+1} 
      &\leq C_{11}\int_{s_0}^{t}e^{(p+1)s}
      \int_{\Omega}
      |2c - (\alpha n_1 + \beta n_2)c - u\cdot\nabla c|^{p+1} 
      \\ \notag 
          &\quad\,+ C_{11}e^{(p+1)s_0}
          \|c(\cdot, s_0)\|_{W^{2, p+1}(\Omega)}^{p+1} 
      \end{align}
holds with some $C_{11}=C_{11}(p) > 0$. 
Lemma \ref{lem;Linf;c} and 
the H\"older inequality imply 
      \begin{align}\label{pote1.8tochu}
      &\int_{s_0}^{t}e^{(p+1)s}\int_{\Omega}
      |2c - (\alpha n_1 + \beta n_2)c - u\cdot\nabla c|^{p+1} 
      \\ \notag 
      &\leq C_{12}\int_{s_0}^{t}e^{(p+1)s}
      \int_{\Omega} 
          (n_1^{p+1} + n_2^{p+1} + |u\cdot\nabla c|^{p+1}) 
          + C_{12}e^{(p+1)t} 
      \\ \notag 
      &\leq C_{12}\int_{s_0}^{t}e^{(p+1)s}
     \Bigl(\|n_1\|_{L^{p+1}(\Omega)}^{p+1} + \|n_2\|_{L^{p+1}(\Omega)}^{p+1} 
               + \|u\|_{L^{(p+1)\theta}(\Omega)}^{p+1}
               \|\nabla c\|_{L^{(p+1)\theta'}(\Omega)}^{p+1}\Bigr) 
    \\ \notag
     &\quad\,+ C_{12}e^{(p+1)t}
      \end{align}
with some $C_{12}=C_{12}(p) > 0$. 
Here we see from the Gagliardo--Nirenberg 
inequality and Lemma \ref{lem;Linf;c} 
that there exist constants 
$C_{13}=C_{13}(p), C_{14}=C_{14}(p) > 0$ such that 
      \begin{align}\label{pote1.9}
      \|\nabla c\|_{L^{(p+1)\theta'}(\Omega)}^{p+1} 
      &\leq C_{13}\|\Delta c\|_{L^{p+1}(\Omega)}^{a(p+1)}\|c\|_{L^\infty(\Omega)}^{(1-a)(p+1)} 
             + C_{13}\|c\|_{L^1(\Omega)}^{p+1} \\ \notag 
      &\leq C_{14}\|\Delta c\|_{L^{p+1}(\Omega)}^{a(p+1)} 
      + C_{14}
      \end{align}
with $a := \frac{1-\frac{3}{(p+1)\theta'}}{2-\frac{3}{p+1}} 
\in (\frac{1}{2}, 1)$. 
By \eqref{pote1.8}, \eqref{pote1.8tochu}, \eqref{pote1.9} and the Young inequality 
it holds that 
     \begin{align*}
     \int_{s_0}^{t}e^{(p+1)s}
     \int_{\Omega}|\Delta c|^{p+1} 
      &\leq C_{15}\int_{s_0}^{t}e^{(p+1)s}
     (\|n_1\|_{L^{p+1}(\Omega)}^{p+1} + \|n_2\|_{L^{p+1}(\Omega)}^{p+1}) \\ \notag
           &\quad\,+ a\int_{s_0}^{t}e^{(p+1)s}\|\Delta c\|_{L^{p+1}(\Omega)}^{p+1} 
      + C_{15}\int_{s_0}^{t}e^{(p+1)s} 
     \|u\|_{L^{(p+1)\theta}(\Omega)}^{\frac{p+1}{1-a}} \\ \notag
        &\quad\,+ C_{15}\int_{s_0}^{t}e^{(p+1)s}\|u\|_{L^{(p+1)\theta}(\Omega)}^{p+1}   
          + C_{15}e^{(p+1)t}
          + C_{15}
     \end{align*} 
with some $C_{15}=C_{15}(p) > 0$. 
Here we use $p<2$, which namely enable 
us to pick $r \in (1, 3)$ such that 
     \begin{equation}\label{keypoint}
     \frac{2-\frac{3}{p+1}}{1-\frac{3}{(p+1)\theta}}\cdot(p+1)\cdot
                        \frac{\frac{3}{r} - \frac{3}{(p+1)\theta}}{\frac{1}{2} + \frac{3}{r}} 
     < 2
     \end{equation}
holds.
Therefore we can obtain that  
     \begin{align}\label{pote1.11}
     &\int_{s_0}^{t}e^{(p+1)s}\int_{\Omega}|\Delta c|^{p+1} \\ \notag
      &\leq \frac{C_{15}}{1-a}\int_{s_0}^{t}e^{(p+1)s}
     (\|n_1\|_{L^{p+1}(\Omega)}^{p+1} + 
     \|n_2\|_{L^{p+1}(\Omega)}^{p+1}) 
      + \frac{C_{15}}{1-a}\int_{s_0}^{t}e^{(p+1)s}
      \|u\|_{L^{(p+1)\theta}(\Omega)}^{\frac{p+1}{1-a}} \\ \notag
      &\quad\,+ \frac{C_{15}}{1-a}\int_{s_0}^{t}e^{(p+1)s}
      \|u\|_{L^{(p+1)\theta}(\Omega)}^{p+1} + \frac{C_{15}}{1-a}e^{(p+1)t}
      + \frac{C_{15}}{1-a}.
     \end{align} 
Now we see from the Gagliardo--Nirenberg inequality, Lemma \ref{pote3} and the Young inequality 
that there exists a constant $C_{16}=C_{16}(p)>0$ 
such that 
     \begin{equation}\label{pote1.12}
     \|u(\cdot, s)\|_{L^{(p+1)\theta}(\Omega)}^{\frac{p+1}{1-a}} 
     \leq \|Au(\cdot, s)\|_{L^2(\Omega)}^{\frac{p+1}{1-a}b}
     \|u(\cdot, s)\|_{L^r(\Omega)}^{\frac{p+1}{1-a}(1-b)} 
     \leq C_{16} + C_{16}\|Au(\cdot, s)\|_{L^2(\Omega)}^2 
     \end{equation} 
with $b := \frac{\frac{3}{r} - \frac{3}{(p+1)\theta}}{\frac{1}{2} + \frac{3}{r}} 
\in (0, 1)$, since $\frac{p+1}{1-a}b < 2$ 
from \eqref{keypoint}. 
Similarly, there exists a constant 
$C_{17}=C_{17}(p)>0$ 
such that 
     \begin{equation}\label{pote1.12.1}
     \|u(\cdot, s)\|_{L^{(p+1)\theta}(\Omega)}^{p+1} 
     \leq C_{17} + C_{17}\|Au(\cdot, s)\|_{L^2(\Omega)}^2
     \end{equation}
for all $s \in (s_0, \tmax)$. Therefore combination of \eqref{pote1.11} with \eqref{pote1.12} and \eqref{pote1.12.1} yields that 
there exists a constant $C_{18}=C_{18}(p) > 0$ 
such that 
\begin{align*}
     &\int_{s_0}^{t}e^{(p+1)s}\int_{\Omega}|\Delta c(\cdot, s)|^{p+1}\,ds \\ 
      &\leq C_{18}\int_{s_0}^{t}e^{(p+1)s}
     (\|n_1(\cdot, s)\|_{L^{p+1}(\Omega)}^{p+1} + \|n_2(\cdot, s)\|_{L^{p+1}(\Omega)}^{p+1})\,ds \\
&\quad\,
      + C_{18}\int_{s_0}^{t}e^{(p+1)s}\|Au(\cdot, s)\|_{L^2(\Omega)}^2\,ds 
        + C_{18}e^{(p+1)t} + C_{18}
     \end{align*}
for all $t \in (s_0, \tmax)$, which means the end of the proof. 
\end{proof}


In order to control $\int_{s_0}^{t}e^{(p+1)s}\int_{\Omega}|\Delta c|^{p+1}$ 
we provide the following lemma.
\begin{lem}\label{pote6}
For all $p > 1$ there exists a constant 
$C_{19}=C_{19}(p) > 0$ such that 
 \begin{align*}
     &\int_{s_0}^{t}e^{(p+1)s}\int_{\Omega}|Au|^2  \\ 
     &\leq { 2}e^{(p+1)s_0}\int_{\Omega}|A^{\frac{1}{2}}u(\cdot, s_0)|^2 
              + C_{19}e^{(p+1)t} 
            + C_{19}\int_{s_0}^{t}e^{(p+1)s}\Bigl(\int_{\Omega}n_1^{p+1} + \int_{\Omega}n_2^{p+1} \Bigr)
     \end{align*}
for all $t \in (s_0, \tmax)$.
\end{lem}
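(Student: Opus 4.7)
The plan is to derive a differential inequality for $\|A^{1/2}u\|_{L^2}^2$ by testing the projected Stokes equation with $Au$, then handle the weight $e^{(p+1)s}$ through a careful integration-by-parts / absorption argument.

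First I would apply the Helmholtz projection $\mathcal{P}$ to the fourth equation of \eqref{P} (recalling $\kappa=0$) to arrive at
\begin{equation*}
u_t + Au = \mathcal{P}\bigl[(\gamma n_1 + \delta n_2)\nabla\phi\bigr], \qquad x\in\Omega,\ t\in(0,\tmax).
\end{equation*}
Testing this identity with $Au$ in $L^2(\Omega)$ yields, using $(u_t, Au)_{L^2}=\tfrac{1}{2}\tfrac{d}{dt}\|A^{1/2}u\|_{L^2}^2$, Young's inequality on the forcing term, and the bound $\|\nabla\phi\|_{L^\infty(\Omega)}\le C$ from \eqref{condi;ini2},
\begin{equation*}
\tfrac{1}{2}\tfrac{d}{dt}\|A^{1/2}u\|_{L^2(\Omega)}^2 + \tfrac{1}{2}\|Au\|_{L^2(\Omega)}^2 \le C\int_\Omega (n_1^2 + n_2^2).
\end{equation*}

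Next, I would multiply by $e^{(p+1)s}$ and integrate over $(s_0,t)$. The delicate step is the time derivative: integration by parts produces the boundary contribution $e^{(p+1)t}\|A^{1/2}u(\cdot,t)\|_{L^2}^2 - e^{(p+1)s_0}\|A^{1/2}u(\cdot,s_0)\|_{L^2}^2$ together with the new bulk term $-(p+1)\int_{s_0}^t e^{(p+1)s}\|A^{1/2}u\|_{L^2}^2$. This last term is the main obstacle: I have to control it without creating a term that grows with $t$ uniformly. I would use the interpolation identity $\|A^{1/2}u\|_{L^2}^2 = (u,Au)_{L^2} \le \|u\|_{L^2}\|Au\|_{L^2}$ followed by Young's inequality, getting
\begin{equation*}
(p+1)\|A^{1/2}u\|_{L^2(\Omega)}^2 \le \tfrac{1}{4}\|Au\|_{L^2(\Omega)}^2 + C(p)\|u\|_{L^2(\Omega)}^2.
\end{equation*}
The $\|Au\|_{L^2}^2$ piece is absorbed into the coercive term on the left, and $\|u\|_{L^2}^2$ is bounded uniformly in time by Lemma \ref{pote3} (with, say, $r=2\in(1,3)$).

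Combining, discarding the nonnegative $e^{(p+1)t}\|A^{1/2}u(\cdot,t)\|_{L^2}^2$ boundary term on the left, and bounding $\int_{s_0}^t e^{(p+1)s}\,ds \le \frac{1}{p+1}e^{(p+1)t}$, I would obtain
\begin{equation*}
\tfrac{1}{4}\int_{s_0}^t e^{(p+1)s}\int_\Omega |Au|^2 \le e^{(p+1)s_0}\int_\Omega |A^{1/2}u(\cdot,s_0)|^2 + C e^{(p+1)t} + C\int_{s_0}^t e^{(p+1)s}\int_\Omega (n_1^2+n_2^2).
\end{equation*}
Finally, since $p>1$ gives $p+1\ge 2$, Young's inequality delivers $n_i^2 \le 1 + n_i^{p+1}$, so the $L^2$-integrals on the right can be replaced by $L^{p+1}$-integrals plus an $e^{(p+1)t}$ contribution, after which adjusting the constant (and the prefactor on the $A^{1/2}u(\cdot,s_0)$ term to $2$) yields the desired inequality.
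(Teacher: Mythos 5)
Your proposal is correct and follows essentially the same route as the paper: testing the projected Stokes equation with $Au$, using Young's inequality and $n_i^2\le 1+n_i^{p+1}$ on the forcing, and controlling the weighted term via $\|A^{\frac12}u\|_{L^2(\Omega)}^2\le \|u\|_{L^2(\Omega)}\|Au\|_{L^2(\Omega)}$ together with the $L^2$-bound on $u$ from Lemma \ref{pote3}. The only cosmetic difference is that the paper absorbs $\frac{p+1}{2}\int_\Omega|A^{\frac12}u|^2$ into the differential inequality before multiplying by $e^{(p+1)s}$, whereas you multiply by the weight first and integrate by parts in time; the two computations are identical in substance.
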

\begin{proof}
It follows from the fourth equation in \eqref{P}, 
the Young inequality 
and 
the continuity of the Helmholtz projection on 
$L^2(\Omega; \mathbb{R}^3)$ 
(\cite[Theorem 1]{FM}) that 
     \begin{align*}
     &\frac{1}{2}\frac{d}{dt}\int_{\Omega}|A^{\frac{1}{2}}u|^2 + \int_{\Omega}|Au|^2 \\
     &= \int_{\Omega} Au \cdot {\cal P}[(\gamma n_1 + \delta n_2)\nabla\phi] \\
     &\leq \frac{1}{4}\int_{\Omega} |Au|^2 
                            + \int_{\Omega}|(\gamma n_1 + \delta n_2)\nabla\phi|^2 \\
     &\leq \frac{1}{4}\int_{\Omega} |Au|^2 
 + (\gamma^2 + \delta^2)\|\nabla\phi\|_{L^{\infty}(\Omega)}^2\Bigl(\int_{\Omega}n_1^2 + \int_{\Omega}n_2^2 \Bigr) \\
     &\leq \frac{1}{4}\int_{\Omega} |Au|^2 + (\gamma^2 + \delta^2)\|\nabla\phi\|^2_{L^{\infty}(\Omega)}\Bigl(2|\Omega| + \int_{\Omega}n_1^{p+1} 
                                                               + \int_{\Omega}n_2^{p+1} \Bigr), 
     \end{align*} 
and hence there exists a constant $C_{20} > 0$ such that 
     \begin{equation}\label{pote1.13}
     \frac{1}{2}\frac{d}{dt}\int_{\Omega}|A^{\frac{1}{2}}u|^2 
     + \frac{3}{4}\int_{\Omega}|Au|^2 
     \leq C_{20} + C_{20}\Bigl(\int_{\Omega}n_1^{p+1} + \int_{\Omega}n_2^{p+1} \Bigr)
     \end{equation}
and we derive from \cite[Part2, Theorem 14.1]{Friedman_partial}, 
Lemma \ref{pote3} and the Young inequality that 
     \begin{equation}\label{pote1.14}
     \int_{\Omega}|A^{\frac{1}{2}}u|^2 
     \leq C_{21}\|Au(\cdot, s)\|_{L^2(\Omega)}\|u\|_{L^2(\Omega)}
     \leq C_{22} + \frac{1}{(p+1)}\|Au(\cdot, s)\|_{L^2(\Omega)}^2
     \end{equation}%
with some constants $C_{21}, C_{22}>0$. 
By \eqref{pote1.13} and \eqref{pote1.14} we obtain 
     \begin{equation*}
     \frac{1}{2}\frac{d}{dt}\int_{\Omega}|A^{\frac{1}{2}}u|^2 
     + \frac{p+1}{2}\int_{\Omega}|A^{\frac{1}{2}}u|^2 
     + \frac{1}{4}\int_{\Omega}|Au|^2 
     \leq C_{23} 
             + C_{23}\Bigl(\int_{\Omega}n_1^{p+1} + \int_{\Omega}n_2^{p+1} \Bigr) 
     \end{equation*} 
with some constant $C_{23}=C_{23}(p) > 0$, and hence we have 
     \begin{align*}
     &\int_{s_0}^{t}e^{(p+1)s}\int_{\Omega}|Au|^2  \\ \notag
     &\leq { 2}e^{(p+1)s_0}\int_{\Omega}|A^{\frac{1}{2}}u(\cdot, s_0)|^2 
            + C_{24}e^{(p+1)t} 
            + C_{24}\int_{s_0}^{t}e^{(p+1)s}
                         \Bigl(\int_{\Omega}n_1^{p+1} + \int_{\Omega}n_2^{p+1} \Bigr)
     \end{align*}
with some constant $C_{24}=C_{24}(p) > 0$, 
which concludes the proof.
\end{proof}

\begin{lem}\label{pote7}
For all $p \in (1, 2)$ and for all $\ell > 0$ 
there exist positive constants 
$K(p) > 0$ and 
$C_{25}=C_{25}(p, \ell)> 0$ such that 
if $\mu > \mu_{p, \ell} := \ell + K(p)\ell^{-p}\chi^{p+1}$, then 
     $$
     \|n_i(\cdot, t)\|_{L^p(\Omega)} \leq C_{25} 
     \quad \mbox{for all}\ t \in (s_0, \tmax)
     $$
for $i=1, 2$, 
where 
$\mu = \min\{\mu_1,\mu_2\}$, $\chi = \max\{\chi_1, \chi_2\}$.
\end{lem}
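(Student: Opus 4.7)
The plan is to combine Lemmas \ref{pote4}, \ref{pote5}, and \ref{pote6} in a cascading fashion so that the superlinear term $\int_{s_0}^t e^{(p+1)s}(\int n_1^{p+1}+\int n_2^{p+1})$ can be absorbed, provided $\mu$ is large enough relative to $\chi$.

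First, I would substitute the estimate of Lemma \ref{pote6} for $\int_{s_0}^t e^{(p+1)s}\int_\Omega |Au|^2$ into the right-hand side of Lemma \ref{pote5}, since that is the only place where $u$ enters. This produces a self-contained bound of the form
\begin{align*}
\int_{s_0}^{t}e^{(p+1)s}\int_{\Omega}|\Delta c|^{p+1}
&\le K_1(p)\int_{s_0}^{t}e^{(p+1)s}\Bigl(\int_\Omega n_1^{p+1}+\int_\Omega n_2^{p+1}\Bigr) \\
&\quad + K_2(p)\,e^{(p+1)t} + K_3(p,s_0),
\end{align*}
where $K_1(p)=C_{10}(p)(1+C_{19}(p))$ and $K_2, K_3$ likewise depend only on $p$ (and on fixed quantities at time $s_0$).

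Next, I would plug this into the right-hand side of Lemma \ref{pote4}. Crucially, the "good" term $-(\mu-\ep-\ell)$ and the "bad" contribution arising from $\int|\Delta c|^{p+1}$ through the prefactor $C_4(p)\ell^{-p}\chi^{p+1}K_1(p)$ share the same temporal weight $e^{-(p+1)t}\int_{s_0}^{t}e^{(p+1)s}\cdot\,ds$, so they merge into a single term with coefficient
\[
-\Bigl(\mu-\ep-\ell-C_4(p)\,K_1(p)\,\ell^{-p}\chi^{p+1}\Bigr).
\]
Setting $K(p):=C_4(p)K_1(p)$ and $\mu_{p,\ell}:=\ell+K(p)\ell^{-p}\chi^{p+1}$, the hypothesis $\mu>\mu_{p,\ell}$ allows one to choose $\ep>0$ so small that this coefficient is nonpositive; discarding the resulting nonpositive term then leaves only bounded remainders, since $C_4\ell^{-p}\chi^{p+1}K_2(p)\,e^{(p+1)t}$ is neutralized by the outer factor $e^{-(p+1)t}$, and all other contributions are $O(1)$.

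I do not expect a genuine analytic obstacle, because the hard estimates have already been performed in Lemmas \ref{pote4}--\ref{pote6}. The main care required is bookkeeping: one must verify that the constant $K(p)$ depends only on $p$ and not on $\ell$, $\mu$, or $\chi$. This is indeed the case, since $\ell$ appears only in the explicit factor $\ell^{-p}\chi^{p+1}$ of Lemma \ref{pote4}, while Lemmas \ref{pote5} and \ref{pote6} are entirely free of $\ell$ and $\mu$. With this verified, the resulting inequality delivers $\|n_i(\cdot,t)\|_{L^p(\Omega)}\le C_{25}$ uniformly on $(s_0,\tmax)$ for $i=1,2$, as claimed.
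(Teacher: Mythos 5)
Your proposal is correct and follows essentially the same route as the paper: substitute Lemma \ref{pote6} into Lemma \ref{pote5}, insert the result into Lemma \ref{pote4}, merge the superlinear terms into a single coefficient $-(\mu-\ep-\ell-K(p)\ell^{-p}\chi^{p+1})$ with $K(p)$ depending only on $p$, and use $\mu>\mu_{p,\ell}$ to pick $\ep$ so this term can be discarded, leaving a bound uniform in $t\in(s_0,\tmax)$. The paper's proof is exactly this bookkeeping, so no further comment is needed.
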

\begin{proof}
It follows from Lemmas \ref{pote4}, \ref{pote5} and \ref{pote6} that 
there exists a constant $K(p) > 0$ such that 
     \begin{align*}
     &\frac{1}{p}\int_{\Omega}n_1^p(\cdot, t) + \frac{1}{p}\int_{\Omega}n_2^p(\cdot, t) \\ \notag
     &\leq - (\mu - \ep - \ell - K(p)\ell^{-p}\chi^{p+1})e^{-(p+1)t}\int_{s_0}^{t}e^{(p+1)s}
                                 \Bigl(\int_{\Omega}n_1^{p+1} + \int_{\Omega}n_2^{p+1} \Bigr) \\ \notag
              &\quad\,+ K(p)\ell^{-p}\chi^{p+1} + K(p)
     \end{align*}
for all $t \in (s_0, \tmax)$. 
We assume that $\mu > \mu_{p, \ell}$. 
Then there exists $\ep \in (0, \mu - \mu_{p, \ell})$ such that 
         $$
         \mu - \ep - \ell - K(p)\ell^{-p}\chi^{p+1} \geq 0. 
         $$
Thus we derive that 
         $$
         \frac{1}{p}\int_{\Omega}n_1^p(\cdot, t) + \frac{1}{p}\int_{\Omega}n_2^p(\cdot, t) 
         \leq K(p)\ell^{-p}\chi^{p+1} + K(p)
         $$
holds for all $t \in (s_0, \tmax)$, 
which concludes the proof of Lemma \ref{pote7}.
\end{proof}

The proof of the following lemma is based on the method in 
\cite[Proof of Theorem 1]{YCJZ-2015}.
\begin{lem}\label{pote8}
For all $p \in (\frac{3}{2}, 2)$ there exists 
$\xi_0 > 0$ such that 
if $\frac{\chi}{\mu} < \xi_0$, then  
there exists a constant $C_{26} > 0$ such that 
     $$
     \|n_i(\cdot, t)\|_{L^p(\Omega)} \leq C_{26} 
     \quad \mbox{for all}\ t \in (0, \tmax)
     $$
for $i=1, 2$.
\end{lem}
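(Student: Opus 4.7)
The plan is to deduce the lemma from Lemma \ref{pote7} by optimizing the free parameter $\ell$ to produce the sharpest smallness condition on $\chi/\mu$, and then to cover the remaining short initial time interval by continuity of the local solution.

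First I would examine the function $g(\ell) := \ell + K(p)\ell^{-p}\chi^{p+1}$ appearing in the quantity $\mu_{p,\ell}$ from Lemma \ref{pote7}, and minimize it in $\ell > 0$. Setting $g'(\ell)=0$ yields $\ell_{\star}=(pK(p))^{\frac{1}{p+1}}\chi$, which gives
\begin{equation*}
g(\ell_{\star}) = \bigl[(pK(p))^{\frac{1}{p+1}} + K(p)(pK(p))^{-\frac{p}{p+1}}\bigr]\chi =: K_{\star}(p)\,\chi.
\end{equation*}
Hence the hypothesis $\mu > \mu_{p,\ell_{\star}}$ in Lemma \ref{pote7} is equivalent to $\mu > K_{\star}(p)\chi$, i.e.\ to
\begin{equation*}
\frac{\chi}{\mu} < \xi_0 := \frac{1}{K_{\star}(p)}.
\end{equation*}
With this choice of $\xi_0$ (depending only on $p$), Lemma \ref{pote7} applied with $\ell=\ell_{\star}$ supplies a constant $\widetilde{C}=\widetilde{C}(p)>0$ such that
\begin{equation*}
\|n_i(\cdot,t)\|_{L^p(\Omega)} \le \widetilde{C} \quad \text{for all } t \in (s_0,\tmax),\ i=1,2.
\end{equation*}

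Next I would handle the initial segment $t \in (0,s_0]$. Since $s_0\in(0,\tmax)\cap(0,1]$ is fixed and the local solution from Lemma \ref{pre1} satisfies $n_1,n_2 \in C(\overline{\Omega}\times[0,s_0])$, the map $t\mapsto \|n_i(\cdot,t)\|_{L^p(\Omega)}$ is continuous on the compact interval $[0,s_0]$, hence bounded by some constant $C^\prime(p,s_0)$. Combining this with the bound on $(s_0,\tmax)$ and taking $C_{26}:=\max\{\widetilde{C},C^\prime\}$ gives the claim for all $t\in(0,\tmax)$.

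The only delicate point is the algebraic optimization step: one needs $p\in(\tfrac{3}{2},2)$ so that Lemma \ref{pote7} is applicable (this requires $p<2$, inherited from Lemma \ref{pote5}) and so that the resulting exponent $p$ will later be large enough, namely $p>\tfrac{3}{2}$, to bootstrap to $L^\infty$-bounds in subsequent lemmas; but for the statement of this lemma itself there is no obstacle beyond the explicit minimization of $g$ and the observation that $\xi_0$ depends only on $p$ (through $K(p)$), not on $\chi$ or $\mu$ individually.
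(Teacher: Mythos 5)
Your proposal is correct and takes essentially the same approach as the paper: the paper defines $\xi_0$ exactly through $\inf_{\ell>0}\bigl(\ell+K(p)\ell^{-p}\chi^{p+1}\bigr)=\tfrac{1}{\xi_0}\chi$, which is precisely your explicit minimization in $\ell$, and then invokes Lemma \ref{pote7}. Your extra step covering $(0,s_0]$ by continuity of the local solution is a small additional care the paper omits when it states the bound on all of $(0,\tmax)$ although Lemma \ref{pote7} only yields it on $(s_0,\tmax)$.
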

\begin{proof} 
Let $p \in (\frac{3}{2}, 2)$ and 
let $\xi_0=\xi_0 (p) > 0$ satisfing 
       $$
       \inf_{\ell > 0} \mu_{p, \ell} = \inf_{\ell > 0} (\ell + K(p)\ell^{-p}\chi^{p+1}) 
       = \frac{1}{\xi_0}\chi.
       $$
Then we can see that $\frac{\chi}{\mu} < \xi_0$ implies $\mu > \mu_{p, \ell}$ with some  $\ell > 0$. 
Therefore Lemma \ref{pote7} implies that 
there exists a constant $C_{27}=C_{27}(p) > 0$ such that 
     $$
     \|n_i(\cdot, t)\|_{L^p(\Omega)} \leq C_{27}
     \quad \mbox{for all}\ t \in (0, \tmax)
     $$
for each $i=1, 2$, which implies the end of the proof. 
\end{proof}

\begin{lem}\label{pote9}
Assume $\frac{\chi}{\mu} < \xi_0$. Then there 
exists a constant $C_{28} > 0$ such that 
     \[
     \|A^{\vartheta}u(\cdot, t)\|_{L^2(\Omega)} \leq C_{28}
    \quad \mbox{and}\quad  
    \lp{\infty}{u(\cdot,t)}\le C_{28}
    \]
for all $t\in (0,\tmax)$. 
\end{lem}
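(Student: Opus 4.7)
The plan is to exploit the variation-of-constants representation of $u$ with respect to the Stokes semigroup, together with the $L^p$-estimate for $n_1, n_2$ provided by Lemma \ref{pote8}, and then close the argument via a Sobolev embedding $D(A^{\vartheta}) \hookrightarrow L^\infty(\Omega)$. First I would fix $p \in (\tfrac{3}{2}, 2)$ sufficiently close to $2$ so that
\[
\vartheta + \tfrac{3}{2}\!\left(\tfrac{1}{p} - \tfrac{1}{2}\right) < 1,
\]
which is possible because $\vartheta < 1$; more precisely, any $p > \tfrac{6}{7-4\vartheta}$ works, and the lower bound lies in $(\tfrac{3}{2}, 2)$ thanks to $\vartheta \in (\tfrac{3}{4}, 1)$. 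Under the hypothesis $\frac{\chi}{\mu} < \xi_0$, with $\xi_0 = \xi_0(p)$ being the constant produced in Lemma \ref{pote8}, I would use that lemma to pick up a constant $C>0$ such that $\|n_i(\cdot,t)\|_{L^p(\Omega)} \le C$ for all $t \in (0, \tmax)$ and $i=1,2$.

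Next I would represent $u$ by Duhamel's formula
\[
u(\cdot, t) = e^{-tA} u_0 + \int_0^t e^{-(t-s)A} \mathcal{P}\bigl[(\gamma n_1 + \delta n_2)\nabla \phi\bigr](\cdot, s)\, ds,
\]
where $\mathcal{P}$ denotes the Helmholtz projection on $L^p(\Omega;\mathbb{R}^3)$, and then apply $A^\vartheta$ and take the $L^2$-norm. Using the continuity of $\mathcal{P}$ on $L^p$, the bound $\|\nabla \phi\|_{L^\infty(\Omega)} \le C$ coming from \eqref{condi;ini2}, and the smoothing estimate for the Stokes semigroup
\[
\|A^\vartheta e^{-\tau A} \mathcal{P} f\|_{L^2(\Omega)} \le C\, \tau^{-\vartheta - \frac{3}{2}\left(\frac{1}{p}-\frac{1}{2}\right)}\, e^{-\lambda \tau}\, \|f\|_{L^p(\Omega)} \qquad (\tau > 0)
\]
with some $\lambda > 0$ (see, e.g., Giga--Sohr), I would obtain
\[
\|A^\vartheta u(\cdot, t)\|_{L^2(\Omega)} \le \|A^\vartheta e^{-tA} u_0\|_{L^2(\Omega)} + C\int_0^t (t-s)^{-\vartheta - \frac{3}{2}\left(\frac{1}{p}-\frac{1}{2}\right)} e^{-\lambda(t-s)} ds.
\]
The first term is controlled by $\|A^\vartheta u_0\|_{L^2(\Omega)}$ (a finite quantity, by \eqref{condi;ini1}), and the choice of $p$ renders the exponent of $(t-s)$ greater than $-1$, so the convolution integral is bounded uniformly in $t$. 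This yields the desired bound $\|A^\vartheta u(\cdot, t)\|_{L^2(\Omega)} \le C_{28}$.

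For the $L^\infty$-estimate, I would invoke the continuous embedding $D(A^\vartheta) \hookrightarrow L^\infty(\Omega)$, which holds because $2\vartheta > \tfrac{3}{2}$ in the three-dimensional setting, and deduce $\|u(\cdot,t)\|_{L^\infty(\Omega)} \le C\|A^\vartheta u(\cdot,t)\|_{L^2(\Omega)} \le C_{28}$. The main obstacle is the fine balance in the choice of $p$: it must be large enough (close to $2$) so that the singular kernel $\tau^{-\vartheta - \frac{3}{2}(\frac{1}{p}-\frac{1}{2})}$ is integrable near $\tau = 0$ for the given $\vartheta \in (\tfrac{3}{4}, 1)$, yet still lie in $(\tfrac{3}{2}, 2)$ to fall within the range of Lemma \ref{pote8}; I would verify that both constraints can be met simultaneously precisely because $\vartheta < 1$.
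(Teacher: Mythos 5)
Your proposal is correct and follows essentially the same route as the paper: the variation-of-constants formula for the Stokes semigroup combined with the $L^p$-bound of Lemma \ref{pote8}, the Helmholtz projection continuity, the smoothing estimate with kernel $(t-s)^{-\vartheta-\frac{3}{2}(\frac{1}{p}-\frac{1}{2})}e^{-\lambda(t-s)}$, and then the embedding $D(A^\vartheta)\hookrightarrow L^\infty(\Omega)$. Your admissibility condition $p>\frac{6}{7-4\vartheta}$ is exactly the paper's choice $p\in\bigl(\tfrac{1}{\frac{1}{2}+\frac{2}{3}(1-\vartheta)},2\bigr)$, so the two arguments coincide.
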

\begin{proof}
Noting that $\frac{1}{2}+\frac{2}{3}(1-\vartheta) \in (\frac{1}{2}, \frac{2}{3})$,
we can fix 
$p \in \Bigl(\frac{1}{\frac{1}{2}+\frac{2}{3}(1-\vartheta)}, 2\Bigr)$. 
It follows from Lemma \ref{pote8}, the well-known
regularization estimates for Stokes semigroup \cite{Gigi, Sohr} 
and the continuity of the Helmholtz projection on 
$L^r(\Omega; \mathbb{R}^3)$ (see e.g.,  \cite[Theorem 1]{FM}) 
that there exist constants $C_{29}, C_{30}, C_{31}, C_{32} > 0$ 
and $\lambda > 0$ such that
      \begin{align*}
      \|A^{\vartheta}u(\cdot, t)\|_{L^2(\Omega)} 
      &\leq \|A^{\vartheta}e^{-tA}u_0\|_{L^2(\Omega)} 
               + \int_0^t 
           \|A^{\vartheta}e^{-(t-s)A}{\cal P}[(\gamma n_1 + \delta n_2)\nabla\phi]\|_{L^2(\Omega)}\,ds
 \\ 
       &
       \leq \|A^{\vartheta}u_0\|_{L^2(\Omega)}
 \\
    &\quad\, 
    + C_{29}\int_0^t 
    (t-s)^{-\vartheta-\frac{3}{2}(\frac{1}{p}-\frac{1}{2})}
    e^{-\lambda(t-s)}\|{\cal P}[(\gamma n_1 + \delta n_2)\nabla\phi]\|_{L^p(\Omega)}\, ds 
    \\
       &\leq \|A^{\vartheta}u_0\|_{L^2(\Omega)} 
                + C_{30}\int_0^t 
                    (t-s)^{-\vartheta-\frac{3}{2}(\frac{1}{p}-\frac{1}{2})}e^{-\lambda(t-s)}\|\gamma n_1+\delta n_2\|_{L^p(\Omega)}\,ds 
     \\
       &\leq \|A^{\vartheta}u_0\|_{L^2(\Omega)} 
                + C_{31}\int_0^{\infty} 
\sigma^{-\vartheta-\frac{3}{2}(\frac{1}{p}-\frac{1}{2})}
e^{-\lambda\sigma}\,d\sigma \leq C_{32}
      \end{align*}
for all $t \in (0, \tmax)$ 
since $\vartheta+\frac{3}{2}(\frac{1}{p}-\frac{1}{2}) < 1$. 
Moreover, the properties of $D(A^\vartheta)$ 
(\cite[Theorem 3]{Giga_1981-domain} 
and \cite[Theorem 1.6.1]{Henry_1981}) 
imply that there exists $C_{33}>0$ such that 
\begin{align*}
  \lp{\infty}{u(\cdot,t)}\le 
  C_{33}\lp{2}{A^\vartheta u(\cdot,t)}
\end{align*}
for all $t\in (0,\tmax)$, 
which concludes the proof.
\end{proof}

 
\begin{lem}\label{pote12}
Assume $\frac{\chi}{\mu} < \xi_0$. Then there exist 
$r \in (3, 6) \cap (1, q]$ and $C_{34}=C_{34}(r) > 0$ such that 
     $$
     \|\nabla c(\cdot, t)\|_{L^r(\Omega)} \leq C_{34} 
     \quad \mbox{for all}\ t \in (0, \tmax).
     $$
\end{lem}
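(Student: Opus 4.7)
The plan is to work with the variation-of-constants representation of $c$ with respect to the Neumann heat semigroup,
\begin{equation*}
  c(\cdot,t) = e^{t\Delta}c_0 + \int_0^t e^{(t-s)\Delta}\bigl[-(\alpha n_1+\beta n_2)c - u\cdot\nabla c\bigr](s)\,ds,
\end{equation*}
and apply the gradient together with the standard $L^p$-$L^r$ smoothing estimate
$\|\nabla e^{\tau\Delta}f\|_{L^r(\Omega)} \le C(1+\tau^{-1/2-(3/2)(1/p-1/r)})e^{-\lambda\tau}\|f\|_{L^p(\Omega)}$ for $p\le r$. First I would fix $p\in(\tfrac{3}{2},2)$ as supplied by Lemma \ref{pote8} and then choose $r\in(3,6)\cap(1,q]$ close enough to $3$ so that $\tfrac{1}{p}-\tfrac{1}{r}<\tfrac{1}{3}$ (feasible because, as $p\uparrow 2$, this forces only $r<6$, and as $r\downarrow 3$ it is compatible with $p$ close to~$2$); this guarantees integrability of the singular kernel in $s$ near $s=t$.

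Next I would bound the three contributions to $\nabla c(\cdot,t)$ separately. The initial-data term $\|\nabla e^{t\Delta}c_0\|_{L^r}$ is controlled by $\|\nabla c_0\|_{L^q}$ since $c_0\in W^{1,q}(\Omega)$ and $r\le q$. For the source term, Lemma \ref{lem;Linf;c} combined with Lemma \ref{pote8} gives $\|(\alpha n_1+\beta n_2)c\|_{L^p}\le \|c\|_{L^\infty}(\alpha\|n_1\|_{L^p}+\beta\|n_2\|_{L^p})\le C$, so the corresponding integral reduces to $C\int_0^t(1+(t-s)^{-1/2-(3/2)(1/p-1/r)})e^{-\lambda(t-s)}\,ds$, which is finite uniformly in $t$ by the above choice of $r$ and $p$.

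The convection term $u\cdot\nabla c$ is the delicate one. Using the $L^\infty$-bound on $u$ from Lemma \ref{pote9} in the form $\|u\cdot\nabla c\|_{L^r}\le \|u\|_{L^\infty}\|\nabla c\|_{L^r}$ and estimating the corresponding semigroup factor by the same-exponent bound (i.e.\ $p=r$, giving kernel $(t-s)^{-1/2}$), one arrives at the Volterra-type inequality
\begin{equation*}
  \|\nabla c(\cdot,t)\|_{L^r} \le C_1 + C_2\int_0^t \bigl(1+(t-s)^{-1/2}\bigr)e^{-\lambda(t-s)}\,\|\nabla c(\cdot,s)\|_{L^r}\,ds
\end{equation*}
valid on $(0,\tmax)$.

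The main obstacle is closing this inequality without picking up time growth: the kernel is $L^1$-integrable but $C_2$ depends on $\|u\|_{L^\infty}$, so one cannot directly absorb the right-hand side. My plan is to rewrite the third equation using the shifted operator $\Delta-\mu_0$ for $\mu_0>0$ sufficiently large, so that the semigroup smoothing estimate acquires an additional decay factor $e^{-\mu_0(t-s)}$; upon the corresponding modification (with the additional source $\mu_0 c$, which is bounded in $L^p$ by Lemma \ref{lem;Linf;c}), one obtains the same inequality but with $\int_0^\infty(1+\tau^{-1/2})e^{-(\lambda+\mu_0)\tau}\,d\tau$ in place of the previous kernel integral. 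Choosing $\mu_0$ large enough makes $C_2$ times this integral strictly less than $1$, so setting $M(T):=\sup_{t\in(0,T)}\|\nabla c(\cdot,t)\|_{L^r}$ and taking the supremum in $t$ yields $M(T)\le C_1/(1-C_2 \cdot (\text{small}))$, uniformly in $T\in(0,\tmax)$. (Equivalently one can split the convolution at $t-\delta$ and use that $\int_0^\delta(1+\tau^{-1/2})e^{-\lambda\tau}\,d\tau\to 0$ as $\delta\to 0^+$, combined with a finite-step iteration, but the shift argument avoids iteration.)
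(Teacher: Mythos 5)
Your argument is essentially sound, but it treats the convection term by a genuinely different route than the paper. The paper first uses $\nabla\cdot u=0$ (and $u=0$ on $\partial\Omega$) to rewrite $u\cdot\nabla c=\nabla\cdot(uc)$, and then estimates $\nabla e^{(t-s)(\Delta-1)}\nabla\cdot(uc)$ through fractional powers $(-\Delta+1)^{\ell}$ and the smoothing estimates of Horstmann--Winkler, so that only $\|u\|_{L^\infty}$ (Lemma \ref{pote9}) and $\|c\|_{L^\infty}$ (Lemma \ref{lem;Linf;c}) enter the right-hand side; no $\nabla c$ appears, and the bound is obtained in closed form without any self-referential inequality. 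You instead keep $u\cdot\nabla c$, bound it by $\|u\|_{L^\infty}\|\nabla c\|_{L^r}$, arrive at a Volterra-type inequality, and close it by shifting to $\Delta-\mu_0$ with $\mu_0$ large so that the convolution kernel's mass times $C_2$ is less than one. This absorption is legitimate, with two caveats you should make explicit: (i) the smallness must come only from the kernel integral $\int_0^\infty(1+\sigma^{-1/2})e^{-(\lambda+\mu_0)\sigma}\,d\sigma$, since $C_2=C\|u\|_{L^\infty}$ is fixed and the constant $C_1(\mu_0)$ grows with $\mu_0$ (harmless, as $\mu_0$ is fixed once and for all); and (ii) to subtract $\tfrac12 M(T)$ you need $M(T)=\sup_{t<T}\|\nabla c(\cdot,t)\|_{L^r}<\infty$ for each $T<\tmax$, which follows from Lemma \ref{pre1} ($c\in L^\infty_{\rm loc}([0,\tmax);W^{1,q}(\Omega))$ and $r\le q$) and should be cited. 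The treatment of the initial-data and zeroth-order source terms, and the choice of $p\in(\tfrac32,2)$, $r\in(3,6)\cap(1,q]$ with $\tfrac1p-\tfrac1r<\tfrac13$, match the paper's. In short, the paper's $\nabla\cdot(uc)$ device buys a direct, non-iterative bound at the cost of fractional-power semigroup machinery, while your approach is more elementary in the semigroup estimates it uses but requires the a priori finiteness of the local supremum and the large-shift absorption.
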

\begin{proof} 
Let $r \in (3, 6)\cap(1, q]$ and fix $p \in (\frac{3q}{q+3}, 2)$.
An application of 
the variation of constants formula for $c$ 
leads to  
     \begin{align}\label{pote6.1}
     \|\nabla c(\cdot, t)\|_{L^{r}(\Omega)} 
     &\leq \|\nabla e^{t(\Delta - 1)} c_0\|_{L^{r}(\Omega)} 
         \\ \notag
              &\quad\,
              + \int_{0}^t 
                    \|\nabla e^{(t-s)(\Delta - 1)}
              (\alpha n_1(\cdot, s) + \beta n_2(\cdot, s)+1)c(\cdot, s)\|_{L^{r}(\Omega)}\,ds 
              \\ \notag
              &\quad\,+ \int_{0}^t 
                    \|\nabla e^{(t-s)(\Delta - 1)}
                                      \nabla\cdot(u(\cdot, s)c(\cdot, s))\|_{L^{r}(\Omega)}\,ds. 
     \end{align} 
We first obtain the estimate for the first term 
on the right-hand side of \eqref{pote6.1}. 
Noting that $q > 3$, we derive from the H\"older inequality and \cite[Lemma 1.3 (iii)]{Winkler_2010_Aggreagation} that 
there exist constants $C_{35},C_{36} > 0$ such that 
    \begin{align}\label{hangunc}
    \|\nabla e^{t(\Delta-1)}c_0\|_{L^r(\Omega)} 
    &\leq C_{35}\|\nabla e^{t(\Delta-1)}c_0\|_{L^q(\Omega)}
    \\\notag
    &\leq C_{36}\|\nabla c_0\|_{L^q(\Omega)}.
    \end{align}
We next establish the estimate for the second term 
on the right-hand side of \eqref{pote6.1}. 
Lemmas \ref{lem;Linf;c} and \ref{pote8} yield that 
there exist constants $C_{37}, C_{38} > 0$ such that 
       \begin{align}\label{intnorm}
       &\int_{0}^t 
                    \|\nabla e^{(t-s)(\Delta - 1)}
              (\alpha n_1(\cdot, s) + \beta n_2(\cdot, s)+1)c(\cdot, s)\|_{L^{r}(\Omega)}\,ds 
     \\ \notag
     &\leq C_{37}\int_{0}^t 
                             [1 + (t - s)^{-\frac{1}{2} - \frac{3}{2}(\frac{1}{p}-\frac{1}{r})}]
                 e^{-(t-s)}(\|n_1(\cdot, s)\|_{L^p(\Omega)} + \|n_2(\cdot, s)\|_{L^p(\Omega)}+|\Omega|^{\frac{1}{p}})\,ds 
        \\ \notag 
     &\leq C_{38}\int_{0}^t 
                 [1 + (t - s)^{-\frac{1}{2} - \frac{3}{2}(\frac{1}{p}-\frac{1}{r})}]e^{-(t-s)}\,ds.
     \end{align} 
Here, since $\frac{1}{2} + \frac{3}{2}(\frac{1}{p}-\frac{1}{r}) < 1$, we have 
      \begin{equation}\label{kaseki}
      \int_{0}^t [1 + (t - s)^{-\frac{1}{2} - \frac{3}{2}(\frac{1}{p}-\frac{1}{r})}]e^{-(t-s)}\,ds 
      \leq \int_0^{\infty}(1 + \sigma^{-\frac{1}{2} - \frac{3}{2}(\frac{1}{p}-\frac{1}{r})})e^{-\sigma}\,d\sigma < \infty.
      \end{equation}
Combination of \eqref{intnorm} and \eqref{kaseki} derives that  
    \begin{equation}\label{es}
    \int_{0}^t \|\nabla e^{(t-s)(\Delta - 1)}
                    (\alpha n_1(\cdot, s) + \beta n_2(\cdot, s)+1)c(\cdot, s)\|_{L^{r}(\Omega)}\,ds 
    \leq C_{39}
    \end{equation}
with some constant $C_{39}>0$.
Finally we will deal with the third term 
on the right-hand side of \eqref{pote6.1}. 
Now we put $\theta \geq r$, $0 < \ell < \frac{1}{2}$ 
satisfying $\frac{1}{2} + \frac{3}{2}(\frac{1}{\theta}-\frac{1}{r}) < \ell$ and 
$a \in (0, \frac{1}{2}-\ell)$. Then we derive from 
\cite[Section 2]{HW-2005} and Lemmas \ref{lem;Linf;c} and \ref{pote9} that 
there exist constants $C_{40}, C_{41}, C_{42} > 0$ 
and $\lambda>0$ such that 
     \begin{align}\label{hangun}
     &\int_{0}^t \|\nabla e^{(t-s)(\Delta - 1)}
                                      \nabla\cdot(u(\cdot, s)c(\cdot, s))\|_{L^{r}(\Omega)}\,ds 
     \\ \notag
     &\leq \int_{0}^t \|e^{(t-s)(\Delta - 1)}
                                  \nabla\cdot(u(\cdot, s)c(\cdot, s))\|_{W^{1, r}(\Omega)}\,ds 
     \\ \notag
     &\leq C_{40}\int_{0}^t \|(-\Delta+1)^{\ell}e^{(t-s)(\Delta - 1)}
                            \nabla\cdot(u(\cdot, s)c(\cdot, s))\|_{L^{\theta}(\Omega)}\,ds  
     \\ \notag
     &\leq C_{41}\int_{0}^t (t-s)^{-\ell-\frac{1}{2}-a}e^{-\lambda(t-s)}\|u(\cdot, s)c(\cdot, s)\|_{L^{\theta}(\Omega)}\,ds 
     \\ \notag
     &\leq C_{42}\int_{0}^t (t-s)^{-\ell-\frac{1}{2}-a}e^{-\lambda(t-s)}\,ds.
     \end{align}
Noting that $\ell+\frac{1}{2}+a < 1$, 
we infer that there exists a constant $C_{43}>0$ such that 
     \begin{equation}\label{kasekibun}
     \int_{0}^t (t-s)^{-\ell-\frac{1}{2}-a}e^{-\lambda(t-s)}\,ds 
     \leq \int_{0}^{\infty} \sigma^{-\ell-\frac{1}{2}-a}e^{-\lambda\sigma}\,d\sigma 
     \le C_{43}. 
     \end{equation}
From \eqref{hangun} and \eqref{kasekibun} we have
     \begin{equation}\label{kasekibunbun}
     \int_{0}^t \|\nabla e^{(t-s)(\Delta - 1)}
                                      \nabla\cdot(u(\cdot, s)c(\cdot, s))\|_{L^{r}(\Omega)}\,ds 
     \leq C_{44}
     \end{equation}
with some constant $C_{44} > 0$.
Therefore in light of \eqref{pote6.1}, 
\eqref{hangunc}, \eqref{es} and  \eqref{kasekibunbun} 
there exists a constant $C_{45} > 0$ that 
          $$
          \|\nabla c(\cdot, t)\|_{L^r(\Omega)} 
          \leq C_{45}
          $$
for all $t \in (0, \tmax)$.
\end{proof}
 Then we will derive the $L^\infty$-estimate for 
 $n_i$ by using the well-known semigroup estimates 
 (see \cite{B-B-T-W}). 
\begin{lem}\label{pote13}
Assume $\frac{\chi}{\mu} < \xi_0$. 
Then there exists a constant $C_{46} > 0$ such that 
     $$
     \|n_i(\cdot, t)\|_{L^{\infty}(\Omega)} \leq C_{46} 
     \quad \mbox{for all}\ t \in (0, \tmax)
     $$
for $i = 1, 2$.
\end{lem}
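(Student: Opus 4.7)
The plan is to invoke the variation-of-constants representation of $n_i$ and exploit the smoothing properties of the Neumann heat semigroup, in the spirit of the bootstrap argument employed in \cite{B-B-T-W}. Since $\nabla\cdot u=0$, I would rewrite the first two equations of \eqref{P} in divergence form as
\[
(n_i)_t=\Delta n_i-\nabla\cdot(\chi_i n_i\nabla c+u n_i)+\mu_i n_i(1-n_i-a_i n_j),
\]
fix $s_0\in(0,\min\{1,\tmax\})$ and, for $t\in(s_0,\tmax)$, write
\begin{align*}
n_i(\cdot,t)&=e^{(t-s_0)\Delta}n_i(\cdot,s_0)-\int_{s_0}^t e^{(t-s)\Delta}\nabla\cdot\bigl(\chi_i n_i\nabla c+u n_i\bigr)(\cdot,s)\,ds\\
&\quad+\mu_i\int_{s_0}^t e^{(t-s)\Delta}\bigl[n_i(1-n_i-a_in_j)\bigr](\cdot,s)\,ds.
\end{align*}
Lemma \ref{pre1} supplies $n_i(\cdot,s_0)\in L^\infty(\Omega)$, so the first summand is immediately controlled, and for the two integral terms I would rely on the classical pointwise estimates
\[
\|e^{\tau\Delta}\nabla\cdot f\|_{L^\infty(\Omega)}\leq K\tau^{-\tfrac{1}{2}-\tfrac{3}{2\sigma}}e^{-\lambda\tau}\|f\|_{L^\sigma(\Omega)}\quad(\sigma>3),
\]
together with the analogous one without the divergence (valid for $\rho>\tfrac{3}{2}$), whose singular time factors are integrable against $e^{-\lambda\tau}$ on $(0,\infty)$.

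Next, I would introduce $M(T):=\sup_{t\in(s_0,T)}(\|n_1(\cdot,t)\|_{L^\infty(\Omega)}+\|n_2(\cdot,t)\|_{L^\infty(\Omega)})$, finite for every $T\in(s_0,\tmax)$ thanks to the regularity provided by Lemma \ref{pre1}, and estimate each of the three contributions so as to produce a self-improving bound. For the chemotactic piece, H\"older's inequality yields $\|n_i\nabla c\|_{L^\sigma(\Omega)}\leq\|n_i\|_{L^{\sigma r/(r-\sigma)}(\Omega)}\|\nabla c\|_{L^r(\Omega)}$; combined with the bound on $\nabla c$ from Lemma \ref{pote12} (for $r\in(3,6)\cap(1,q]$) and the interpolation $\|n_i\|_{L^{\sigma r/(r-\sigma)}(\Omega)}\leq\|n_i\|_{L^\infty(\Omega)}^\alpha\|n_i\|_{L^p(\Omega)}^{1-\alpha}$ with $p\in(\tfrac{3}{2},2)$ from Lemma \ref{pote8}, this produces $\|n_i\nabla c\|_{L^\sigma(\Omega)}\leq CM(T)^\alpha$ with $\alpha<1$. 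The fluid-transport piece $\|un_i\|_{L^\sigma(\Omega)}\leq\|u\|_{L^\infty(\Omega)}\|n_i\|_{L^\sigma(\Omega)}$ is handled analogously using the $L^\infty$-bound for $u$ from Lemma \ref{pote9} and an $L^p$–$L^\infty$ interpolation of $n_i$. For the logistic piece I would write $|n_i(1-n_i-a_i n_j)|\leq n_i+n_i^2+a_i n_i n_j$ and estimate each summand in $L^\rho$ by the same interpolation trick, again yielding a power of $M(T)$ strictly below $1$.

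Putting everything together and using the finiteness of $\int_0^\infty \tau^{-1/2-3/(2\sigma)}e^{-\lambda\tau}\,d\tau$ and $\int_0^\infty \tau^{-3/(2\rho)}e^{-\lambda\tau}\,d\tau$ (which is precisely where the conditions $\sigma>3$ and $\rho>\tfrac{3}{2}$ come in), I expect to arrive at
\[
M(T)\leq C_1+C_2\,M(T)^{\beta}\qquad\text{for all }T\in(s_0,\tmax),
\]
with constants $C_1,C_2>0$ and some $\beta\in(0,1)$ independent of $T$. This self-improving inequality forces $M(T)\leq C$ uniformly in $T\in(s_0,\tmax)$, and together with the trivial bound on $(0,s_0]$ inherited from Lemma \ref{pre1} this yields the claim.

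The main technical obstacle is the simultaneous choice of the parameters $\sigma$, $p$, $r$ and the interpolation index $\alpha$ so as to secure (i) $\sigma>3$ for time integrability of the singular kernel, (ii) $\sigma<r$ for H\"older to apply, and (iii) every resulting exponent of $M(T)$ strictly below $1$. The narrow windows $p\in(\tfrac{3}{2},2)$ and $r\in(3,6)$ make this bookkeeping delicate, but all three requirements can be met by taking $p$ close to $2$, $r$ close to $6$, and $\sigma$ just above $3$.
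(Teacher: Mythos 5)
Your overall strategy (Duhamel representation, heat-semigroup smoothing, interpolation against the bounds of Lemmas \ref{pote8}, \ref{pote12}, \ref{pote9}, culminating in a self-improving inequality $M(T)\le C_1+C_2M(T)^{\beta}$ with $\beta<1$) is essentially the paper's, and your handling of the divergence-form term is sound: the estimate $\|e^{\tau\Delta}\nabla\cdot f\|_{L^\infty(\Omega)}\le K\tau^{-\frac12-\frac{3}{2\sigma}}e^{-\lambda\tau}\|f\|_{L^\sigma(\Omega)}$ genuinely carries exponential decay, so integrating from a fixed $s_0$ is harmless for that piece. The gap is in the zeroth-order (logistic) contribution. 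The Neumann heat semigroup preserves constants, so there is no exponential decay on data with nonzero mean; the decay factor $e^{-\lambda\tau}$ in the $L^\rho$--$L^\infty$ smoothing estimate is only available for mean-zero data (or for divergence-form inputs). Since $\mu_i n_i(1-n_i-a_in_j)$ contains the nonnegative term $+\mu_i n_i$, the ``analogous estimate without the divergence'' you invoke is false as stated, and without the decay factor the kernel integral $\int_{s_0}^t(t-s)^{-\frac{3}{2\rho}}\,ds$ grows like $(t-s_0)^{1-\frac{3}{2\rho}}$, so your bound on the logistic term is not uniform in $t$ and the self-improving inequality collapses for large $T$. (A secondary bookkeeping point: your interpolation for $\|n_i^2\|_{L^\rho(\Omega)}$ gives the exponent $2-\frac{p}{\rho}$ on $M(T)$, which is below $1$ only if $\rho<p$; this is compatible with $\rho>\frac32$ and $p\in(\frac32,2)$ but needs to be said.)

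The paper sidesteps both issues: it starts the variation-of-constants formula at $t_0:=(t-1)_+$ rather than at a fixed $s_0$, so every time integral runs over an interval of length at most one and no decay is needed, and it disposes of the logistic term by the pointwise bound $\mu_1 n_1(1-n_1-a_1n_2)\le\frac{(1+\mu_1)^2}{4\mu_1}$ together with the order-preserving property of the semigroup, so that term contributes only a constant and no power of $M(T')$ at all. Either fix --- windows of unit length, or absorbing a zeroth-order term so as to work with $e^{(t-s)(\Delta-1)}$ --- would repair your argument; the remaining parameter choices ($\sigma>3$, $\sigma<r$, interpolation exponents strictly below one) are consistent and parallel the paper's, which interpolates against the mass bound of Lemma \ref{pote1} rather than the $L^p$ bound, a purely cosmetic difference.
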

\begin{proof}
We let $q>3$ and let 
$p\in(\frac{3}{2},2)$ with $\frac{3p}{3-p} < q$. 
Then thanks to Lemma \ref{pote8}, we obtain 
\begin{align*}
  \lp{p}{n_1(\cdot,t)}\le C_{47} 
\end{align*}
for all $t\in(0,\tmax)$ with some $C_{47}>0$. 
Now we can choose $r\in (3,q)$ such that 
$p>\frac{3r}{3+r}$ 
and $\theta > 1$ satisfying 
\begin{align*}
\frac{1}{\theta} < \min\left\{1-\frac{r(3-p)}{3p},
\frac{q-r}{q}\right\}, 
\end{align*}
and put $\theta':=\frac{\theta}{\theta-1}$, and then 
\begin{align*} 
  r\theta'<\frac{3p}{3-p}\quad \mbox{and}\quad r\theta' < q
\end{align*} 
hold. 
Now for all $T'\in (0,\tmax)$ 
we note that 
\begin{align*}
M(T'):=\sup_{t\in (0,T')}\lp{\infty}{n_1(\cdot,t)} 
\end{align*}
is finite. 
In order to obtain the estimate for $M(T')$ 
for all $t\in (0,T')$ we put $t_0:=(t-1)_+$ and 
represent $n_1$ according to 
\begin{align*}
  n_1(\cdot,t)
  &=e^{(t-t_0)\Delta}n_1(t_0) 
  - \int_{t_0}^t e^{(t-s)\Delta}\nabla\cdot 
  \left(\chi n_1(\cdot,s)\nabla c(\cdot,s) + n_1(\cdot,s)u(\cdot,s) \right)ds 
\\
&\quad\,+\mu_1\int_{t_0}^t e^{(t-s)\Delta} n_1(1- n_1 - a_1n_2)ds
  \\
  &=:I_1(\cdot,t)+I_2(\cdot,t)+I_3(\cdot,t).
\end{align*}
In the case that $t\le 1$, 
from the order preserving property of 
the Neumann heat semigroup we know that 
\begin{align*}
  \lp{\infty}{I_1(\cdot,t)}
  \le \lp{\infty}{n_{1,0}}
  \quad 
  \mbox{for all}\ t\in(0,\min\{1,T'\}). 
\end{align*}
In the case that $t>1$, 
by using the $L^p$-$L^q$ estimate for 
$(e^{\tau \Delta})_{\tau> 0}$ 
(see \cite[Lemma 1.3 (i)]{Winkler_2010_Aggreagation}) 
and Lemma \ref{pote8} we can see that 
there exists a constant $C_{48}>0$ such that 
\begin{align*}
  \lp{\infty}{I_1(\cdot,t)}
  \le \lp{p}{n_1(\cdot,t_0)} 
  \le C_{48}
  \quad 
  \mbox{for all}\ t\in (1,T'). 
\end{align*}
Thanks to the elementary inequality 
    $$
    \mu_1n_1(1 - n_1 - a_1n_2) 
    \leq -\mu_1\left(n_1-\frac{1+\mu_1}{2\mu_1}\right)^2 
                                                               + \frac{(1+\mu_1)^2}{4\mu_1} 
    \leq \frac{(1+\mu_1)^2}{4\mu_1}
    $$
together with the maximum principle, 
we see that 
there exists a constant $C_{49}>0$ such that 
\begin{align*}
I_3(\cdot,t)\le C_{49}\quad 
  \mbox{for all}\ t\in (1,T'). 
\end{align*}
Next we obtain from the known smoothing property of 
$(e^{\tau\Delta})_{\tau\ge 0}$ (see \cite{FIWY_2016}) 
that 
\begin{align*}
  &\int_{t_0}^t\left\|e^{(t-s)\Delta}\nabla\cdot 
  \left(\chi_1 n_1(\cdot,s)\nabla c(\cdot,s) 
  + n_1(\cdot,s)u(\cdot,s) 
  \right)\right\|_{L^{\infty}(\Omega)}ds
\\
  &\le 
  C_{50} \sup_{s\in (0,T')}
  \left(\chi_1\left\|n_1(\cdot,s)\nabla c(\cdot,s)\right\|_{L^{\infty}(\Omega)} 
  + \left\|n_1(\cdot,s)u(\cdot,s)\right\|_{L^{\infty}(\Omega)}
  \right) 
  \int_0^1  \sigma^{-\frac{1}{2}-\frac{3}{2r}}\,d\sigma
\end{align*} 
for all $t\in(0,T')$ with some $C_{50}>0$. 
Here we note from $\frac{1}{3}+\frac{3}{2r}<1$ that 
$\int_0^1 \sigma^{-\frac{1}{3}-\frac{3}{2r}}\,d\sigma$ 
is finite. 
Then we can obtain that 
\begin{align*}
 \left\|n_1(\cdot,s)\nabla 
 c(\cdot,s)\right\|_{L^{\infty}(\Omega)}
 & \le 
 \left\|n_1(\cdot,s)\nabla 
 c(\cdot,s)\right\|_{L^{\infty}(\Omega)}
 \\
 & \le 
 \lp{r\theta}{n_1(\cdot,s)}\lp{r\theta'}{\nabla c(\cdot,s)}
 \\
 & \le 
 M(T')^{1-\frac{1}{r\theta}}
 \lp{1}{n_1(\cdot,s)}^{\frac{1}{r\theta}}
 \lp{r\theta'}{\nabla c(\cdot,s)} 
\end{align*}
for all $s\in (0,T')$. 
Noting from $r\theta'< \frac{3p}{3-p}$ that 
\begin{align*} 
r\theta' \in (3,6)\cap (1,q], 
\end{align*} 
we have from 
Lemma \ref{pote12} that 
there exists $C_{51}>0$ such that 
\begin{align*}
  \lp{r\theta'}{\nabla c(\cdot,s)}
  \le C_{51} 
  \quad\mbox{for all}\ s\in (0,T'). 
\end{align*}
Therefore we can find $C_{52}>0$ 
satisfying 
\begin{align*}
  \lp{\infty}{n_1(\cdot,s)\nabla c(\cdot,s)}
  \le C_{52}
  \quad \mbox{for all}\ s\in (0,T'). 
\end{align*}
Similarly, from Lemma \ref{pote9} 
there exists a constant $C_{53}>0$ such that 
\begin{align*}
 \lp{\theta}{n_1(\cdot,s)u(\cdot,s)}
 \le C_{53}M(T')^{1-\frac{1}{r\theta}}
 \lp{1}{n_1(\cdot,s)}.
\end{align*}
Therefore, Lemma \ref{pote1} leads to 
the existence of $C_{54},C_{55}>0$ such that 
\begin{align*}
  n_1(\cdot,t)
  \le \lp{\infty}{I_1(\cdot,t)}+\lp{\infty}{I_2(\cdot,t)}+I_3(\cdot,t)
  \le C_{54} + C_{55}M(T')^{1-\frac{1}{r\theta}},
\end{align*}
which implies from the positivity of $n_1$ that 
    $$
    M(T') \leq C_{54} + C_{55}M(T')^{1-\frac{1}{r\theta}}. 
    $$
Noting that $r\theta>1$, 
we derive that there exists $C_{56}>0$ such that 
\begin{align*}
 M(T')=\sup_{t\in(0,T')}
 \lp{\infty}{n_1(\cdot,t)}\le C_{56} 
 \quad 
 \mbox{for all}\ T'\in (0,\tmax).
\end{align*}
Similarly we prove that there exists 
a constant $C_{57}>0$ such that 
$\|n_2(\cdot, t)\|_{L^{\infty}(\Omega)} \leq C_{57}$ for all $t \in (0, \tmax)$.
Therefore we can attain the conclusion of the proof. 
\end{proof}


\begin{prth1.1}
Combination of Lemmas \ref{pre1}, \ref{pote9}, \ref{pote12} and \ref{pote13} directly 
leads to Theorem \ref{maintheorem1}. 
\qed
\end{prth1.1}

\section{Stabilization. Proof of Theorem \ref{maintheorem3}}

\subsection{Case 1: $a_1, a_2 \in (0, 1)$} 

Now we assume that $\frac{\chi}{\mu} < \xi_0$. 
In this section we will show stabilization in \eqref{P} 
in the case $a_1, a_2 \in (0, 1)$. 
We will prove the key estimate for the proof of Theorem \ref{maintheorem3}. 
The proof is same as that of \cite[Lemma 4.1]{HKMY_1}.

\begin{lem}\label{lem;energy;case1}
Let $a_1, a_2 \in (0, 1)$ 
and let $(n_1, n_2, c, u)$ be a solution to 
\eqref{P}. 
Under the assumption of Theorem \ref{maintheorem1}, 
there exist $k_1,\ell_1>0$ and $\ep_1>0$ 
such that the nonnegative functions $E_1$ and $F_1$ 
defined by 
\begin{align*}
  E_1:=\int_\Omega 
       \left(
         n_1-N_1\log \frac{n_1}{N_1}
       \right)
       + 
       k_1\int_\Omega 
       \left(
         n_2-N_2\log \frac{n_2}{N_2}
       \right)
       + \frac{\ell_1}{2}
       \int_\Omega c^2
\end{align*}
and 
\begin{align*}
F_1:=\int_\Omega (n_1-N_1)^2
     +
     \int_\Omega (n_2-N_2)^2
\end{align*}
satisfy 
\begin{align}\label{esti;energy}
  \frac d{dt}E_1(t) 
  \leq  
  - \ep_1 F_1(t)
\quad
  \mbox{for all}\ t>0, 
\end{align}
where 
       $$
       N_1 := \frac{1 - a_1}{1 - a_1a_2},\quad N_2 := \frac{1 - a_2}{1 - a_1a_2}.
       $$
\end{lem}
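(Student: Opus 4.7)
The plan is to differentiate $E_1$ along the flow, use the equilibrium identities $N_1+a_1N_2=1$ and $a_2N_1+N_2=1$ to express the Lotka--Volterra reaction contribution as a quadratic form in $(n_1-N_1,n_2-N_2)$, absorb the chemotaxis cross terms by the Fisher-information dissipation via Young's inequality, and then choose $k_1$ and $\ell_1$ so that what remains is bounded above by $-\varepsilon_1 F_1$.

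Concretely, after differentiating each entropy piece and invoking the $(n_i)_t$ equation from \eqref{P}, all three transport contributions vanish because $\nabla\cdot u=0$ and $u|_{\partial\Omega}=0$; integration by parts for the Laplacians produces the dissipations $-N_1\int_\Omega|\nabla n_1|^2/n_1^2$, $-k_1N_2\int_\Omega|\nabla n_2|^2/n_2^2$ and $-\ell_1\int_\Omega|\nabla c|^2$; the chemotaxis terms convert into $\chi_1N_1\int_\Omega(\nabla n_1\cdot\nabla c)/n_1$ and $k_1\chi_2N_2\int_\Omega(\nabla n_2\cdot\nabla c)/n_2$; and substituting $1=N_1+a_1N_2$ (respectively $1=a_2N_1+N_2$) converts the reaction contribution into $-\mu_1\int_\Omega(n_1-N_1)^2-a_1\mu_1\int_\Omega(n_1-N_1)(n_2-N_2)$ (respectively its $n_2$-analogue). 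Dropping the nonpositive term $-\ell_1\int_\Omega(\alpha n_1+\beta n_2)c^2$, the resulting upper bound on $\tfrac{d}{dt}E_1$ reads
\begin{align*}
\frac{d}{dt}E_1
&\le -N_1\!\int_\Omega\frac{|\nabla n_1|^2}{n_1^2}-k_1N_2\!\int_\Omega\frac{|\nabla n_2|^2}{n_2^2}-\ell_1\!\int_\Omega|\nabla c|^2\\
&\quad+\chi_1N_1\!\int_\Omega\frac{\nabla n_1\cdot\nabla c}{n_1}+k_1\chi_2N_2\!\int_\Omega\frac{\nabla n_2\cdot\nabla c}{n_2}-Q,
\end{align*}
where $Q$ is the quadratic form associated to the symmetric $2\times 2$ matrix $M$ with diagonal entries $(\mu_1,k_1\mu_2)$ and off-diagonal entry $\tfrac{1}{2}(a_1\mu_1+k_1a_2\mu_2)$, evaluated on $(n_1-N_1,n_2-N_2)$.

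The closing step is algebraic. Young's inequality, applied pointwise as $\chi_1N_1(\nabla n_1\cdot\nabla c)/n_1\le\tfrac{N_1}{2}|\nabla n_1|^2/n_1^2+\tfrac{\chi_1^2N_1}{2}|\nabla c|^2$ and analogously to the other cross term, lets me fix $\ell_1>\tfrac{1}{2}(\chi_1^2N_1+k_1\chi_2^2N_2)$ and then discard the entire block of gradient terms. It then remains to choose $k_1$ so that $M$ is positive definite, which reduces to $a_2^2\mu_2^2k_1^2-2(2-a_1a_2)\mu_1\mu_2k_1+a_1^2\mu_1^2<0$. The discriminant of this quadratic in $k_1$ equals $16(1-a_1a_2)\mu_1^2\mu_2^2$, which is strictly positive precisely because $a_1a_2<1$; this is automatic from $a_1,a_2\in(0,1)$. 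Any $k_1$ in the resulting nonempty open interval then produces $Q\ge\varepsilon_1 F_1$ with some $\varepsilon_1>0$, yielding the desired estimate \eqref{esti;energy}.

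The only real obstacle is thus the joint calibration of $k_1$ (which must lie in the window of positive definiteness of $M$) and $\ell_1$ (which must then be large enough to swallow the chemotaxis cross terms); both are possible precisely because $a_1,a_2\in(0,1)$. Nonnegativity of $E_1$ follows from convexity of $s\mapsto s-N_i\log(s/N_i)$, whose minimum at $s=N_i$ has value $N_i>0$, while $F_1\ge 0$ is trivial. Three-dimensional specifics play no role in this argument: Theorem \ref{maintheorem1} supplies the regularity needed to justify the term-by-term differentiation, and the remaining computation mirrors the two-dimensional argument of \cite[Lemma 4.1]{HKMY_1}.
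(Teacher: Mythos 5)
Your proposal is correct and follows essentially the same route as the paper, which simply defers to the Lyapunov-functional argument of \cite[Lemma 4.1]{HKMY_1}: differentiate $E_1$, use $N_1+a_1N_2=1$ and $a_2N_1+N_2=1$ to turn the kinetic terms into a quadratic form in $(n_1-N_1,n_2-N_2)$, absorb the chemotaxis cross terms into the dissipation via Young's inequality by taking $\ell_1$ large, and choose $k_1$ so that the quadratic form is positive definite, which is possible exactly because $a_1a_2<1$. Your discriminant computation and the observation that the transport terms vanish by $\nabla\cdot u=0$ and $u|_{\partial\Omega}=0$ are both accurate, so no gap remains.
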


By using Lemma \ref{lem;energy;case1} we can show 
stabilization of $n_1,n_2$. 

\begin{lem}\label{n_1n_2che1}
Let $a_1, a_2 \in (0, 1)$ 
and let $(n_1, n_2, c, u)$ be a solution to \eqref{P}. 
Under the assumption of Theorem \ref{maintheorem1}, 
the solution of \eqref{P} 
satisfies the following properties\/{\rm :}  
     \begin{equation*}
     \|n_1(\cdot, t) - N_1\|_{L^{\infty}(\Omega)} \to 0,\quad 
     \|n_2(\cdot, t) - N_2\|_{L^{\infty}(\Omega)} \to 0\quad
     \mbox{as}\ t \to \infty.
     \end{equation*}
\end{lem}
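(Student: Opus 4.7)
The plan is to use the energy inequality from Lemma~\ref{lem;energy;case1} to obtain time-integrability of $F_1$, upgrade this to $L^2$-convergence via a uniform equicontinuity argument, and then pass from $L^2$ to $L^\infty$ by interpolating against a uniform H\"older bound on $n_i$.

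First, I would integrate \eqref{esti;energy} over $(0,T)$ and let $T\to\infty$, using the nonnegativity of $E_1$, to obtain
\begin{equation*}
\ep_1\int_0^\infty F_1(t)\,dt \leq E_1(0) < \infty,
\end{equation*}
which in particular gives $\int_0^\infty \lp{2}{n_i(\cdot,t)-N_i}^2\,dt<\infty$ for $i=1,2$.

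Second, I would derive a uniform-in-time H\"older bound on $n_i$. Viewing the $n_i$-equation in \eqref{P} as a linear parabolic equation with drift $\chi_i\nabla c+u$ and bounded source, and invoking the uniform $L^\infty$-bounds on $n_i$ and $u$ from Theorem~\ref{maintheorem1} and Lemma~\ref{pote9} together with the uniform $L^r$-bound on $\nabla c$ with $r>3$ from Lemma~\ref{pote12}, standard parabolic H\"older theory yields some $\alpha\in(0,1)$ and $C>0$ such that
\begin{equation*}
\|n_i\|_{C^{\alpha,\alpha/2}(\overline\Omega\times[t,t+1])} \leq C \quad \mbox{for all}\ t\geq 1,\ i=1,2.
\end{equation*}
This bound renders $t\mapsto F_1(t)$ uniformly H\"older continuous on $[1,\infty)$, and combined with $F_1\in L^1(0,\infty)$ a Barbalat-type argument forces $F_1(t)\to 0$ as $t\to\infty$; equivalently $n_i(\cdot,t)\to N_i$ in $L^2(\Omega)$.

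Finally, to upgrade to $L^\infty$ convergence, I would use the elementary inequality
\begin{equation*}
\lp{\infty}{f} \leq C\, \|f\|_{C^\alpha(\overline\Omega)}^{\frac{3}{2\alpha+3}}\, \lp{2}{f}^{\frac{2\alpha}{2\alpha+3}},
\end{equation*}
which holds on any bounded Lipschitz domain $\Omega\subset\Rone^3$ by a direct ball-of-positivity argument. Applied to $f=n_i(\cdot,t)-N_i$, the uniform H\"older bound combined with the $L^2$-decay yields $\lp{\infty}{n_i(\cdot,t)-N_i}\to 0$. The main obstacle is the uniform H\"older estimate for $n_i$: the drift $\chi_i\nabla c+u$ must lie in a class for which a parabolic H\"older theorem of De~Giorgi--Nash--Moser or Porzio--Vespri type applies, and this is precisely what the $L^\infty$ bound on $u$ from Lemma~\ref{pote9} and the $L^r$ bound on $\nabla c$ with $r>3$ from Lemma~\ref{pote12} are there to guarantee.
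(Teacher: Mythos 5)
Your argument follows essentially the same route as the paper: the time-integrability of $F_1$ from Lemma \ref{lem;energy;case1}, a uniform-in-time H\"older bound on $n_i$ obtained from the boundedness results (Lemmas \ref{pote9}, \ref{pote12}, \ref{pote13}) via parabolic regularity, and then a decay-plus-interpolation step to reach $L^\infty$ convergence. The only difference is cosmetic: where the paper cites the ``compactness method'' of \cite[Lemma 4.6]{HKMY_1}, you spell out the same content explicitly through a Barbalat-type argument and the H\"older--$L^2$ interpolation inequality, which is correct.
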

\begin{proof}
Firstly we can see from Lemmas \ref{pote9}, \ref{pote12}, \ref{pote13} and \cite{Ladyzenskaja} that 
there exist constants $C_{58} > 0$ 
and $\alpha_0 \in (0, 1)$ 
such that  
      $$
      \|n_1\|_{C^{\alpha_0, \frac{\alpha_0}{2}}(\overline{\Omega}\times[t, t+1])} 
      +  \|n_2\|_{C^{\alpha_0, \frac{\alpha_0}{2}}(\overline{\Omega}\times[t, t+1])} 
      \leq C_{58}
      $$
for all $t \geq 1$. 
Now we set 
\[
  f_1(t) := \int_{\Omega} 
  (n_1 - N_1)^2 + \int_{\Omega} (n_2 - N_2)^2 \geq 0. 
\]
Then the function $f_1$ is nonnegative 
and uniformly continuous. 
We see from Lemma \ref{lem;energy;case1} that 
\begin{align}\label{f_1esti}
  \int_1^{\infty}\int_\Omega (n_1-N_1)^2 + 
  \int_1^\infty \int_\Omega (n_2-N_2)^2=
  \int_1^{\infty} f_1(t)\,dt \leq \frac{1}{\ep_1}E_1(1) < \infty.
\end{align}
Thus the compactness method (\cite[Lemma 4.6]{HKMY_1}) 
concludes the proof. %
\end{proof}


\subsection{Case 2: $a_1 \geq 1 > a_2$} 

In this section 
we assume that $\frac{\chi}{\mu} < \xi_0$. 
This section is devoted to obtaining 
stabilization in \eqref{P} 
in the case $a_1 \geq 1 > a_2$. 
We will give the following lemma for obtaining it. 
The proof is same as that of 
\cite[Lemma 4.3]{HKMY_1}. 

\begin{lem}\label{lem;energy;case2}
Let $a_1 \geq 1> a_2$  
and let $(n_1, n_2, c, u)$ be a solution to { \eqref{P}}. 
Under the assumption of Theorem \ref{maintheorem1}, 
there exist $k_2,\ell_2>0$ and $\ep_2>0$ 
such that the nonnegative functions $E_2$ and $F_2$ 
defined by 
\begin{align*}
  E_2:=\int_\Omega n_1
       + 
       k_2\int_\Omega 
       \left(
         n_2-\log n_2
       \right)
       + \frac{\ell_2}{2}
       \int_\Omega c^2
\end{align*}
and 
\begin{align*}
F_2:=\int_\Omega n_1^2
     +
     \int_\Omega (n_2-1)^2
\end{align*}
satisfy 
\begin{align}\label{esti;energy;case2}
  \frac d{dt}E_2(t) 
  \leq  
  - \ep_2 F_2(t)
\quad
  \mbox{for all}\ t>0. 
\end{align}
\end{lem}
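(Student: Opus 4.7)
The plan is to differentiate each term of $E_2$ along the flow of \eqref{P}, combine the results with weights $1,k_2,\ell_2$, and reduce the outcome to a negative multiple of $F_2$ via Young's inequality. The use of the equilibrium point $(n_1,n_2)=(0,1)$ is what drives the choice of the entropy $n_2-\log n_2$.

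First, I would test the three equations with the obvious multipliers. Integrating the first equation, using $\nabla\cdot u=0$, the Neumann condition, and expanding the logistic term about $(0,1)$ gives
\[
\frac{d}{dt}\int_\Omega n_1 = \mu_1(1-a_1)\int_\Omega n_1 - \mu_1 \int_\Omega n_1^2 - \mu_1 a_1 \int_\Omega n_1(n_2-1),
\]
where the hypothesis $a_1\ge 1$ makes the first term nonpositive, so it will be discarded. Testing the $n_2$-equation against $1-\tfrac{1}{n_2}$ (and using $\int_\Omega(1-1/n_2)u\cdot\nabla n_2=\int_\Omega u\cdot\nabla(n_2-\log n_2)=0$), I would get
\[
\frac{d}{dt}\int_\Omega(n_2-\log n_2) = -\int_\Omega\frac{|\nabla n_2|^2}{n_2^2} + \chi_2\int_\Omega\frac{\nabla n_2\cdot\nabla c}{n_2} - \mu_2\int_\Omega(n_2-1)^2 - \mu_2 a_2\int_\Omega n_1(n_2-1).
\]
Multiplying the $c$-equation by $c$ gives $\frac{d}{dt}\tfrac12\int_\Omega c^2\le-\int_\Omega|\nabla c|^2$, because $\int_\Omega c\,u\cdot\nabla c=0$ and the reaction $-(\alpha n_1+\beta n_2)c^2$ is nonpositive.

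Second, I would sum these identities with weights $1,k_2,\ell_2$ and absorb the chemotactic cross term via
\[
k_2\chi_2\int_\Omega\frac{\nabla n_2\cdot\nabla c}{n_2} \le \frac{k_2}{2}\int_\Omega\frac{|\nabla n_2|^2}{n_2^2} + \frac{k_2\chi_2^2}{2}\int_\Omega|\nabla c|^2.
\]
Fixing $\ell_2\ge k_2\chi_2^2$ renders the entire gradient contribution nonpositive, and dropping the (nonpositive) linear term $\mu_1(1-a_1)\int_\Omega n_1$ leaves only
\[
-\mu_1\int_\Omega n_1^2 - (\mu_1 a_1+k_2\mu_2 a_2)\int_\Omega n_1(n_2-1) - k_2\mu_2\int_\Omega(n_2-1)^2.
\]

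The remaining question — and the step I expect to be the main obstacle — is to choose $k_2>0$ so that this quadratic form in $\bigl(\int_\Omega n_1^2,\int_\Omega(n_2-1)^2\bigr)$ admits an $\ep_2>0$ with the form bounded above by $-\ep_2 F_2$. By Cauchy--Schwarz and a weighted Young estimate on the cross term, this reduces to the algebraic inequality
\[
4\mu_1 k_2 \mu_2 > (\mu_1 a_1 + k_2\mu_2 a_2)^2,
\]
which is a quadratic condition on $k_2$ depending on $a_1,a_2$. I would analyze its discriminant to exhibit an admissible $k_2$ under the hypothesis $a_1\ge 1>a_2$; having fixed such a $k_2$, then choosing $\ell_2$ large completes the argument. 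Nonnegativity of $E_2$ and $F_2$ follows immediately from $n_2-\log n_2\ge 1$ and the nonnegativity of the other integrands, closing the proof.
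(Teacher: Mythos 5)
Your computations up to the final algebraic step are correct (the testing functions $1$, $1-\tfrac1{n_2}$, $c$, the vanishing of the transport terms, and the absorption of $\chi_2\int\frac{\nabla n_2\cdot\nabla c}{n_2}$ by choosing $\ell_2$ large are all as in the argument the paper invokes from \cite[Lemma 4.3]{HKMY_1}). But the last step contains a genuine gap: after discarding the term $\mu_1(1-a_1)\int_\Omega n_1$, you need a $k_2>0$ with $4\mu_1 k_2\mu_2>(\mu_1 a_1+k_2\mu_2 a_2)^2$, and such a $k_2$ does \emph{not} exist under the stated hypothesis $a_1\ge 1>a_2$ alone. Writing $x=k_2\mu_2$, the requirement is $a_2^2x^2+(2a_1a_2-4)\mu_1 x+a_1^2\mu_1^2<0$, whose discriminant equals $16\mu_1^2(1-a_1a_2)$; hence an admissible $x$ exists only when $a_1a_2<1$. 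The case covered by the lemma includes, e.g., $a_1=3$, $a_2=\tfrac12$, where $a_1a_2>1$ and your quadratic form can never be made negative definite, so no $\ep_2>0$ can be extracted this way (and for $a_1a_2=1$ one only gets semidefiniteness, again no $\ep_2$).

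The missing idea is to exploit $a_1\ge 1$ \emph{before} completing the square rather than by discarding the linear term. Since $n_1,n_2\ge 0$, one has $n_1(1-n_1-a_1n_2)\le n_1(1-n_1-n_2)=-n_1^2-n_1(n_2-1)$, which effectively replaces $a_1$ by $1$ in the cross term (equivalently: keep the term $\mu_1(1-a_1)\int n_1$ and use $n_1(n_2-1)\ge -n_1$ to cancel the excess $(a_1-1)$-part of the cross term against it). The reaction contribution is then bounded by $-\mu_1\int n_1^2-(\mu_1+k_2\mu_2a_2)\int n_1(n_2-1)-k_2\mu_2\int(n_2-1)^2$, and the definiteness condition becomes $(\mu_1+k_2\mu_2a_2)^2<4\mu_1k_2\mu_2$, which is solvable in $k_2$ precisely because $a_2<1$ (for instance $k_2=\frac{\mu_1}{\mu_2a_2}$ when $a_2>0$). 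This is the route taken in the proof the paper cites (\cite[Lemma 4.3]{HKMY_1}, following the Bai--Winkler construction); with that modification, and your choice of $\ell_2$, the rest of your argument goes through and yields \eqref{esti;energy;case2}.
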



By using a similar argument, 
Lemma \ref{lem;energy;case2} 
leads to 
stabilization of $n_1,n_2$. 
\begin{lem}\label{n_1n_2che2}
Let $a_1 \geq 1> a_2$ 
and let $(n_1, n_2, c, u)$ be a solution to \eqref{P}. 
Under the assumption of Theorem \ref{maintheorem1}, it holds that 
     $$
     \|n_1(\cdot, t)\|_{L^{\infty}(\Omega)} \to 0,\quad 
     \|n_2(\cdot, t) - 1\|_{L^{\infty}(\Omega)} \to 0\quad
     \mbox{as}\ t \to \infty.
     $$
\end{lem}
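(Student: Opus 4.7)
The plan is to mirror the strategy used for Lemma \ref{n_1n_2che1}, exploiting the dissipation estimate \eqref{esti;energy;case2} provided by Lemma \ref{lem;energy;case2} together with uniform H\"older regularity of $n_1$ and $n_2$. Since $E_2$ is nonnegative and $\frac{d}{dt}E_2 \le -\ep_2 F_2$, integrating in time yields
\begin{align*}
\int_1^{\infty}\!\!\int_\Omega n_1^2 \, + \,\int_1^{\infty}\!\!\int_\Omega (n_2-1)^2 \;=\; \int_1^\infty F_2(t)\,dt \;\le\; \frac{1}{\ep_2} E_2(1) \;<\; \infty.
\end{align*}

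Next, I would invoke the uniform $L^\infty$--bounds on $n_1, n_2, c, u$ provided by Lemmas \ref{pote9}, \ref{pote12} and \ref{pote13}, combined with standard parabolic H\"older regularity \cite{Ladyzenskaja} applied to the first two equations of \eqref{P} (where the chemotactic drift $\chi_i \nabla c$ and the transport $u$ are bounded in suitable spaces), to obtain a constant $C>0$ and $\alpha_0 \in (0,1)$ with
\[
\|n_1\|_{C^{\alpha_0,\alpha_0/2}(\overline{\Omega}\times[t,t+1])} + \|n_2\|_{C^{\alpha_0,\alpha_0/2}(\overline{\Omega}\times[t,t+1])} \le C \qquad \text{for all } t\ge 1.
\]
Setting $f_2(t):=\int_\Omega n_1^2 + \int_\Omega(n_2-1)^2$, this H\"older regularity implies that $f_2$ is uniformly continuous on $[1,\infty)$, while the computation above shows $f_2 \in L^1(1,\infty)$. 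A standard consequence (the same compactness argument used in \cite[Lemma 4.6]{HKMY_1} and invoked in Lemma \ref{n_1n_2che1}) gives $f_2(t)\to 0$, hence
\[
\|n_1(\cdot,t)\|_{L^2(\Omega)} \to 0, \qquad \|n_2(\cdot,t)-1\|_{L^2(\Omega)} \to 0 \qquad \text{as } t\to\infty.
\]

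Finally, to upgrade from $L^2$ to $L^\infty$ convergence, I would combine the $L^2$--decay with the uniform H\"older bound by a Gagliardo--Nirenberg interpolation of the form
\[
\|w\|_{L^\infty(\Omega)} \le C\|w\|_{C^{\alpha_0}(\overline{\Omega})}^{\theta}\|w\|_{L^2(\Omega)}^{1-\theta}
\]
for a suitable $\theta\in(0,1)$, applied to $w=n_1(\cdot,t)$ and $w=n_2(\cdot,t)-1$. Since the H\"older norms stay bounded while the $L^2$ norms vanish, this yields the desired $L^\infty$--convergences. The only delicate point I foresee is checking that the coefficients appearing in the parabolic equations for $n_1,n_2$ (namely $\chi_i\nabla c$ and $u$) are regular enough for the H\"older estimate of \cite{Ladyzenskaja} to apply; this is where Lemmas \ref{pote9} and \ref{pote12} are essential, and the argument carries over verbatim from the proof of Lemma \ref{n_1n_2che1}, the only structural change being the replacement of $(N_1,N_2)$ by $(0,1)$ in the energy identity.
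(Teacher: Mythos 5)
Your proposal is correct and follows essentially the same route as the paper: integrate the dissipation inequality \eqref{esti;energy;case2} to get \eqref{f_2esti}, use the uniform H\"older bounds from Lemmas \ref{pote9}, \ref{pote12}, \ref{pote13} and \cite{Ladyzenskaja} to deduce $F_2(t)\to 0$, and then upgrade to $L^\infty$ convergence exactly as in Lemma \ref{n_1n_2che1} (the paper delegates this last step to the compactness lemma \cite[Lemma 4.6]{HKMY_1}, which you replace by an equivalent explicit H\"older--$L^2$ interpolation).
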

\begin{proof}
Noting from Lemma \ref{esti;energy;case2} that 
\begin{align}\label{f_2esti}
  \int_1^\infty \int_\Omega n_1^2 
  +\int_1^\infty \int_\Omega (n_2-1)^2 <\infty, 
\end{align}
we can prove this lemma by the same argument as in 
the proof of Lemma \ref{n_1n_2che1}.
\end{proof}

\subsection{Convergence for $c$ and $u$}

Finally we give the following lemma to establish 
the decay properties of $c$ and $u$.  
We first show the lower estimate for $n_2$. 
\begin{lem}\label{esti;low;n}
Let $a_2\in (0,1)$. 
Under the assumption of 
Theorem \ref{maintheorem1}, 
there exist constants $C_{59}>0$ and $T^{*}>0$ such that 
\begin{align*}
  n_2(x, t) \geq C_{59}
  \quad 
  \mbox{for all}\
  x \in \Omega\ \mbox{and all}\ t>T^{*}.
\end{align*}
\end{lem}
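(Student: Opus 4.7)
The plan is to derive the pointwise lower bound for $n_2$ directly from the $L^{\infty}(\Omega)$-convergence of $n_2$ to a positive constant, which has already been established in Lemmas \ref{n_1n_2che1} and \ref{n_1n_2che2}. Since the hypothesis $a_2 \in (0,1)$ is compatible with both Case 1 ($a_1,a_2 \in (0,1)$) and Case 2 ($a_1 \geq 1 > a_2$), the argument splits into these two subcases, each handled by the corresponding stabilization result together with the standing assumption $\frac{\chi}{\mu} < \xi_0$.

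In Case 1, Lemma \ref{n_1n_2che1} yields $\|n_2(\cdot,t) - N_2\|_{L^\infty(\Omega)} \to 0$ as $t \to \infty$, where $N_2 = \frac{1-a_2}{1-a_1 a_2}$. Since $a_1,a_2 \in (0,1)$, one has $0 < 1-a_2 \leq 1-a_1 a_2$, so $N_2 > 0$. Thus there exists $T_1^{*} > 0$ with $\|n_2(\cdot,t)-N_2\|_{L^\infty(\Omega)} < \frac{N_2}{2}$ for all $t > T_1^{*}$, and a triangle-inequality argument gives $n_2(x,t) \geq \frac{N_2}{2}$ pointwise on $\Omega \times (T_1^{*},\infty)$. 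In Case 2, Lemma \ref{n_1n_2che2} gives $\|n_2(\cdot,t) - 1\|_{L^\infty(\Omega)} \to 0$; choosing $T_2^{*} > 0$ so that $\|n_2(\cdot,t)-1\|_{L^\infty(\Omega)} < \frac{1}{2}$ for $t > T_2^{*}$ yields $n_2(x,t) \geq \frac{1}{2}$ pointwise. Taking $C_{59} := \min\bigl\{\tfrac{N_2}{2},\tfrac{1}{2}\bigr\}$ and $T^{*} := \max\{T_1^{*},T_2^{*}\}$ (with only the relevant bound entering in each regime) completes the proof.

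There is no real obstacle here: all the analytic content resides in the convergence lemmas, and the present lemma is merely a reformulation of uniform $L^\infty$-convergence as a uniform positive lower bound, using that the limit profile is a strictly positive constant in both regimes covered by $a_2 \in (0,1)$. The role of this statement further down appears to be to ensure coercivity of the absorption term in the $c$-equation, since $(\alpha n_1 + \beta n_2)c \geq \beta C_{59} c$ on $\Omega \times (T^{*},\infty)$ will yield exponential decay of $c$, which is what is needed to finish the proof of convergence of $c$ and $u$ in Theorem \ref{maintheorem3}.
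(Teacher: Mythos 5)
Your proposal is correct and rests on exactly the same ingredients as the paper's proof, namely the $L^{\infty}$-convergence of $n_2$ to the positive limit $N_2$ (Lemma \ref{n_1n_2che1}) or to $1$ (Lemma \ref{n_1n_2che2}) in the two regimes compatible with $a_2\in(0,1)$; the paper merely phrases the deduction as a contradiction argument with sequences $(x_j,t_j)$, whereas you argue directly via the triangle inequality, which is an inessential difference.
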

\begin{proof} 
We first deal with the case 
that $a_1,a_2\in (0,1)$. 
Now, we assume that this lemma does not hold. 
Then there exist $\{x_j\}_{j\in\mathbb{N}} \subset \Omega$ and 
$\{t_j\}_{j\in\mathbb{N}} \subset [0, \infty)$ 
such that $t_j\to\infty$ as $j\to\infty$ satisfying  
   \[
   n_2(x_j, t_j) < \frac{N_2}{2} 
   \quad \mbox{for all}\ j\in \mathbb{N}.
   \]
Thus we have 
    \begin{align}\label{notconvergence}
    \|n_2(\cdot, t_j) - N_2\|_{L^{\infty}(\Omega)} 
    \geq N_2 - n_2(x_j, t_j) > \frac{N_2}{2} \quad \mbox{for all}\ j\in\mathbb{N}, 
    \end{align}
which means that $n_2(\cdot, t_j)$ does not converge to $N_2$ as $j\to\infty$. 
However, 
Lemma \ref{n_1n_2che1} asserts that 
\[
  \lp{\infty}{n_2(\cdot,t_j)-N_2}\to 0 
  \quad \mbox{as}\ j\to \infty,
\]
which contradicts 
\eqref{notconvergence}. 
In the case that $a_1\ge 1>a_2$ a similar argument 
leads to the lower estimate for $n_2$. 
Therefore we can conclude the proof. 
\end{proof}

\begin{lem}\label{lem;decaycu}
Under the assumption of 
Theorem \ref{maintheorem1}, 
the solution of \eqref{P} satisfies 
  \begin{align*}
    \lp{\infty}{c(\cdot,t)}\to 0,
    \quad 
    \lp{\infty}{u(\cdot,t)}\to 0
    \quad \text{as}\ t\to\infty.
  \end{align*}
\end{lem}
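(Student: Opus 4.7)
The plan is to handle the convergences of $c$ and $u$ separately, exploiting completely different mechanisms: for $c$ the signal-consuming reaction term supplies a pointwise absorption coefficient that is bounded away from zero, while for $u$ the Stokes forcing does not vanish in the limit, so the Helmholtz projection has to be used to extract the relevant fluctuating part.

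For the convergence of $c$ I would invoke Lemma \ref{esti;low;n}: once $t > T^{*}$ we have $n_2 \geq C_{59}$, hence $\alpha n_1 + \beta n_2 \geq \beta C_{59} =: \kappa > 0$ on $\Omega \times (T^{*},\infty)$. Setting $\widetilde{c}(x,t) := e^{\kappa(t-T^{*})}c(x,t)$ turns the third equation of \eqref{P} into
\[
\widetilde{c}_t + u\cdot\nabla\widetilde{c} = \Delta\widetilde{c} - (\alpha n_1 + \beta n_2 - \kappa)\widetilde{c}
\]
with nonnegative zeroth-order coefficient; since $\nabla\cdot u = 0$ and $u|_{\partial\Omega} = 0$ make the convective term admissible, the parabolic maximum principle yields $\|\widetilde{c}(\cdot,t)\|_{L^\infty(\Omega)} \leq \|\widetilde{c}(\cdot,T^{*})\|_{L^\infty(\Omega)}$ for all $t > T^{*}$. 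Combined with the monotonicity of $\|c(\cdot,t)\|_{L^\infty(\Omega)}$ from Lemma \ref{lem;Linf;c}, this yields $\|c(\cdot,t)\|_{L^\infty(\Omega)} \leq e^{-\kappa(t-T^{*})}\|c_0\|_{L^\infty(\Omega)} \to 0$.

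For $u$ the decisive observation is that the limiting force, namely $(\gamma N_1 + \delta N_2)\nabla\phi$ in Case 1 and $\delta\nabla\phi$ in Case 2, is a pure gradient and hence lies in the kernel of the Helmholtz projection $\mathcal{P}$. Writing $w := \gamma(n_1 - N_1) + \delta(n_2 - N_2)$ in Case 1 and $w := \gamma n_1 + \delta(n_2 - 1)$ in Case 2, one rewrites the Stokes equation as $u_t + Au = \mathcal{P}[w\nabla\phi]$, with $\|w(\cdot,t)\|_{L^\infty(\Omega)} \to 0$ supplied by Lemma \ref{n_1n_2che1} or Lemma \ref{n_1n_2che2}. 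The variation-of-constants formula
\[
u(t) = e^{-tA}u_0 + \int_0^t e^{-(t-s)A}\mathcal{P}[w(s)\nabla\phi]\,ds
\]
together with the standard Stokes estimate $\|A^\vartheta e^{-\tau A}g\|_{L^2(\Omega)} \leq C\tau^{-\vartheta}e^{-\lambda\tau}\|g\|_{L^2(\Omega)}$ (with some $\lambda > 0$) then reduces the claim to an $\varepsilon$-splitting in time: fix $T_\varepsilon$ so that $\|w(s)\|_{L^\infty(\Omega)} < \varepsilon$ for $s > T_\varepsilon$; the tail $\int_{T_\varepsilon}^t$ is bounded by $C\varepsilon\,\|\nabla\phi\|_{L^\infty(\Omega)}|\Omega|^{1/2}\int_0^\infty \sigma^{-\vartheta}e^{-\lambda\sigma}\,d\sigma$, while the head $\int_0^{T_\varepsilon}$ decays exponentially as $t \to \infty$ because $(t-s)^{-\vartheta}e^{-\lambda(t-s)} \to 0$ uniformly in $s \in [0,T_\varepsilon]$ and the integrand is otherwise uniformly controlled. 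This will give $\|A^\vartheta u(t)\|_{L^2(\Omega)} \to 0$, and the embedding $D(A^\vartheta) \hookrightarrow L^\infty(\Omega)$ already used in Lemma \ref{pote9} turns this into $\|u(\cdot,t)\|_{L^\infty(\Omega)} \to 0$.

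The main obstacle is the $u$-part: the Stokes forcing does not vanish pointwise in the limit, so a naive energy or semigroup argument applied to the raw equation is doomed. The single structural observation that the limiting force is a gradient and is therefore annihilated by $\mathcal{P}$ is what unlocks the argument; after that, the semigroup splitting is routine, and the $c$-decay then comes essentially for free from the uniform positivity of $n_2$.
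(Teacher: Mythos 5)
Your proof is correct, but it follows a genuinely different route from the paper in both halves. For $c$, the paper does not use a comparison argument at all: it combines the space--time integrability $\int_{T^*}^\infty\int_\Omega(\alpha n_1+\beta n_2)c^2<\infty$ coming from the Lyapunov functionals of Lemmas \ref{lem;energy;case1} and \ref{lem;energy;case2} with the lower bound of Lemma \ref{esti;low;n} to get $\int_{T^*}^\infty\int_\Omega c^2<\infty$, and then upgrades to $L^\infty$-decay; your pointwise absorption argument (the supersolution $e^{-\kappa(t-T^*)}\|c_0\|_{L^\infty(\Omega)}$ with $\kappa=\beta C_{59}$) is valid and in fact stronger, since it yields an exponential rate. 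For $u$, the paper works in $L^2$: it tests the Stokes equation with $u$, observes that the constant part of the force drops out because $\int_\Omega\nabla\phi\cdot u=0$ (as $\nabla\cdot u=0$ and $u|_{\partial\Omega}=0$), applies Poincar\'e to get $y'+C y\le h$ with $h(t)=C\bigl(\int_\Omega(n_1-n_{1,\infty})^2+\int_\Omega(n_2-n_{2,\infty})^2\bigr)$ integrable by \eqref{f_1esti}, \eqref{f_2esti}, deduces $\|u(\cdot,t)\|_{L^2(\Omega)}\to0$, and finally interpolates $\|u\|_{L^\infty(\Omega)}\le C\|A^\vartheta u\|_{L^2(\Omega)}^{a}\|u\|_{L^2(\Omega)}^{1-a}$ using the uniform bound of Lemma \ref{pote9}. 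Your semigroup argument uses the same structural fact in projected form ($\mathcal{P}[\nabla\phi]=0$, so only $w\nabla\phi$ with $\|w\|_{L^\infty(\Omega)}\to0$ forces the equation), plus the decay estimate $\|A^\vartheta e^{-\tau A}\|\le C\tau^{-\vartheta}e^{-\lambda\tau}$ and an $\varepsilon$-splitting in time; it needs only the $L^\infty$-convergence of $n_i$ from Lemmas \ref{n_1n_2che1}, \ref{n_1n_2che2} rather than the space--time $L^2$ bounds, and it gives $\|A^\vartheta u(\cdot,t)\|_{L^2(\Omega)}\to0$ directly, at the price of invoking fractional Stokes semigroup estimates; the paper's version is more elementary (energy identity, Poincar\'e, ODE comparison) and recycles the already proved bound on $A^\vartheta u$ through interpolation. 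One small caveat: your closing remark that a ``naive energy argument is doomed'' is overstated --- the paper's energy argument on the raw equation succeeds precisely because the gradient force is orthogonal to the solenoidal field $u$, which is the weak-form counterpart of your observation that $\mathcal{P}$ annihilates $\nabla\phi$.
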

\begin{proof}
Noting from Lemmas \ref{lem;energy;case1} and \ref{lem;energy;case2} 
that 
$
 \int_{T^{*}}^\infty \int_\Omega (\alpha n_1+\beta n_2)c^2 <\infty 
$
and 
using Lemma \ref{esti;low;n}, 
we can establish that 
\[
\int_{T^{*}}^\infty\int_\Omega c^2<\infty,
\] 
which entails
 $\lp{\infty}{c(\cdot,t)}\to 0$ as $t\to\infty$. 
Next we will show that $\lp{\infty}{u(\cdot,t)}\to 0$ as $t\to\infty$.
Let $\theta \in (\frac{3}{4}, \vartheta)$ 
and $a=\frac{\theta}{\vartheta}\in (0,1)$. 
It follows from combination of 
\cite[Theorem 3]{Giga_1981-domain}, 
\cite[Theorem 1.6.1]{Henry_1981}, 
\cite[Part 2, Theorem 14.1]{Friedman_partial} 
and Lemma \ref{pote9} that 
there exist constants $C_{60}, C_{61}, C_{62} > 0$ such that 
    \begin{equation*}
    \lp{\infty}{u} \leq C_{60}\lp{2}{A^{\theta}u} 
    \leq C_{61}\lp{2}{A^{\vartheta}u}^{a}\lp{2}{A^0u}^{1-a} 
    \leq C_{62}\lp{2}{u}^{1-a}, 
    \end{equation*} 
which means that it is sufficient to show that 
      $$
      \lp{2}{u(\cdot,t)}\to 0
    \quad \text{as}\ t\to\infty.
      $$
We first note from the Poincar\'e inequality 
that there exists a constant $C_{63} > 0$ such that 
    \begin{equation*}
    \|u(\cdot, s)\|_{L^2(\Omega)}^2 \leq C_{63}\|\nabla u(\cdot, s)\|_{L^2(\Omega)}^2
    \end{equation*}
for all $s \in (0, \infty)$. 
Put 
$(n_{1, \infty},n_{2, \infty}):= (N_1,N_2)$ 
if $a_1,a_2\in (0,1)$ or 
$(n_{1, \infty},n_{2, \infty}):= (0,1)$ 
if $a_1\ge 1 >a_2$. 
We infer from the fourth equation in \eqref{P} 
and the Young inequality that 
    \begin{align*}
    &\frac{1}{2}\frac{d}{dt}\int_{\Omega} |u|^2 + \int_{\Omega} |\nabla u|^2 
    \\
    &= \int_{\Omega} (\gamma(n_1 - n_{1, \infty}) + \delta(n_2 - n_{2, \infty}))\nabla\phi\cdot u + (\gamma n_{1, \infty} + \delta n_{2, \infty})\int_{\Omega}\nabla\phi\cdot u
\\
    &\leq \frac{1}{4C_{63}}\int_{\Omega} |u|^2 
             + C_{64}\Bigl(\int_{\Omega} (n_1 - n_{1, \infty})^2 + \int_{\Omega} (n_2 - n_{2, \infty})^2\Bigr) 
     + (\gamma n_{1, \infty} + \delta n_{2, \infty})\int_{\Omega}\nabla\phi\cdot u
    \end{align*} 
for all $t \in (0, \infty)$ with some constant 
$C_{64} > 0$. Since $\nabla \cdot u = 0$ in $\Omega \times (0, \infty)$, the functions 
$$
y(t) := \int_{\Omega}|u(\cdot, t)|^2 \quad\mbox{and}\quad 
h(t) := 2C_{64}\Bigl(\int_{\Omega} (n_1 - n_{1, \infty})^2 + \int_{\Omega} (n_2 - n_{2, \infty})^2\Bigr)
$$
satisfy  
     \begin{equation*}
     y'(t) + C_{65}y(t) \leq h(t)
     \end{equation*}
with some $C_{65} > 0$.
Hence it holds that 
     \begin{align}\label{pepe}
     y(t) 
     &\leq y(0)e^{-C_{65}t} 
     + \int_{0}^{t}e^{-C_{65}(t-s)}h(s)\,ds  
\\ \notag
     &\leq y(0)e^{-C_{65}t} 
     + \int_{0}^{\frac{t}{2}}e^{-C_{65}(t-s)}h(s)\,ds 
              + \int_{\frac{t}{2}}^{t}e^{-C_{65}(t-s)}h(s)\,ds. 
     \end{align} 
Here we see from Lemma \ref{pote13} that 
there exists a constant $C_{66}>0$ 
such that $h(s) \leq C_{66}$ for all $s > 0$, and hence 
we have 
     \begin{equation}\label{pen}
     \int_{0}^{\frac{t}{2}}e^{-C_{65}(t-s)}h(s)\,ds 
     \leq C_{66}e^{-C_{65}t}\int_{0}^{\frac{t}{2}}
     e^{C_{65}s}\,ds 
     \leq C_{67}e^{-\frac{C_{65}}{2}t}
     \end{equation}
with some $C_{67} > 0$. 
On the other hand, noting from 
\eqref{f_1esti} and \eqref{f_2esti} that 
$\int_{0}^{\infty} h(s)\,ds < \infty$, 
we can see that 
     \begin{equation}\label{pika}
      0 \le 
      \int_{\frac{t}{2}}^{t}e^{-C_{65}(t-s)}h(s)\,ds 
      \le 
      \int_{\frac{t}{2}}^{t}h(s)\,ds \to 0 
      \quad \mbox{as}\ t\to\infty.
     \end{equation} 
Therefore combination of \eqref{pepe} 
with \eqref{pen} and \eqref{pika} leads to 
      \begin{equation*}
      \lp{2}{u(\cdot, t)}^2 = y(t) \to 0  
      \quad \mbox{as}\ t\to\infty, 
     \end{equation*} 
which means the end of the proof.
\end{proof}

\subsection{Proof of Theorem \ref{maintheorem3}}

\begin{prth1.2}
Lemmas \ref{n_1n_2che1}, \ref{n_1n_2che2}, 
and \ref{lem;decaycu} directly show 
Theorem \ref{maintheorem3}. 
\qed
\end{prth1.2}

 

\begin{thebibliography}{99}
%
%
%
\bibitem{B-W}
    X. Bai, M. Winkler, 
    {\it Equilibration in a fully 
    parabolic two-species chemotaxis 
    system with competitive kinetics}, 
    Indiana Univ.\ Math.\ J. {\bf 65}  (2016),  553--583.
  \bibitem{B-B-T-W}
    N. Bellomo, A. Bellouquid, Y. Tao, M. Winkler, 
    {\it Toward a mathematical theory 
    of Keller--Segel models 
    of pattern formation in biological tissues}, 
    Math.\ Models Methods Appl.\ Sci.\ {\bf 25}
     (2015), 1663--1763.
  \bibitem{Tobaias_2016}
   T. Black, 
   {\it Sublinear signal production in 
   a two-dimensional Keller--Segel-Stokes system}, 
   Nonlinear Anal.\ Real World Appl.\ {\bf 31} (2016), 593--609. 

  \bibitem{Black-Lankeit-Mizukami}
    T. Black, J. Lankeit, M. Mizukami, 
    {\it On the weakly competitive case 
    in a two-species chemotaxis model}, 
    IMA J.\ Appl.\ Math. {\bf 81} (2016), 860--876. 
\bibitem{Xinru_2016}
X. Cao, 
{\it Global classical solutions in chemotaxis(-Navier)-Stokes 
system with rotational flux term}, 
J. Differential Equations {\bf 261} (2016), 6883--6914. 

\bibitem{Xinru-Johannes_2016}
X. Cao, J. Lankeit, 
{\it Global classical small-data solutions for a three-dimensional 
chemotaxis Navier--Stokes system involving matrix-valued sensitivities}, 
Calc.\ Var.\ Partial Differential Equations 
{\bf 55} (2016), 
Paper No. 107, 39 pp. 

\bibitem{Friedman_partial}
A. Friedman, 
``Partial Differential Equations'', 
Holt, Rinehart and Winston, Inc., 
New York-Montreal, 
Que.-London, 1969. 

\bibitem{FIWY_2016}
  K. Fujie, A. Ito, M. Winkler, T. Yokota, 
  {\it Stabilization in a chemotaxis model 
  for tumor invasion}, 
  Discrete Contin.\ Dyn.\ Syst.\ {\bf 36} (2016), 
  151--169. 

\bibitem{FM}
D. Fujiwara,  H. Morimoto,
{\it An $L_r$-theorem of the Helmholtz decomposition of vector fields}, 
J.\ Fac.\ Sci.\ Univ.\ Tokyo Sect.\ IA Math.\ {\bf 24} (1977), 685--700.

\bibitem{Giga_1981-domain}
Y. Giga, {\it The Stokes operator in $L_r$ spaces}, 
Proc.\ Japan Acad.\ Ser.\ A Math.\ Sci.\ {\bf 57} 
(1981), 85--89. 
\bibitem{Gigi}
Y. Giga, {\it Solutions for semilinear parabolic equations in Lp and regularity of weak solutions of the Navier--Stokes system},
J.\ Differential Equations {\bf 61} (1986), 186--212. 
\bibitem{Henry_1981}
  D. Henry, 
  ``Geometric Theory of Semilinear Parabolic Equations'', 
  Lecture Notes in Mathematics, 840, 
  Springer-Verlag, Berlin-New York, 1981.
\bibitem{Hieber-Press_Maximal-Sobolev}
M. Hieber, J. Pr\"uss, 
{\it Heat kernels and maximal $L^p$-$L^q$ 
estimate for parabolic evolution equations}, 
Comm.\ Partial Differential
Equations {\bf 22} (1997), 1647--1669.

\bibitem{HKMY_1} 
M. Hirata, S. Kurima, M. Mizukami, T. Yokota, 
 {\it Boundedness and stabilization in a two-dimensional
  two-species chemotaxis-Navier–Stokes system
  with competitive kinetics}, 
  J. Differential Equations, to appear. 
\bibitem{HW-2005}
D. Horstmann, M. Winkler, 
{\it Boundedness vs. blow-up in a chemotaxis system}, 
J.\ Differential Equations {\bf 215} (2005), 52--107.
\bibitem{Ishida_2015}
S. Ishida, 
{\it Global existence and boundedness for 
chemotaxis-Navier--Stokes systems with position-dependent sensitivity 
in 2D bounded domains}, 
Discrete Contin.\ Dyn.\ Syst.\ {\bf 35} (2015), 3463--3482. 

\bibitem{Kozono-Miura-Sugiyama_2016}
H. Kozono, M. Miura, Y. Sugiyama, 
{\it Existence and uniqueness theorem on mild solutions to the 
Keller--Segel system coupled with the Navier-Stokes fluid}, 
J. Funct.\ Anal.\ {\bf 270} (2016), 1663--1683.

\bibitem{Ladyzenskaja}
 O. A. Lady\v{z}enskaja, V. A. Solonnikov, N. N. Ural'ceva, 
 ``Linear and Quasi-linear Equations of Parabolic Type'', AMS, Providence, 1968. 
\bibitem{Lankeit_2016}
  J. Lankeit,
  {\it Long-term behaviour in a chemotaxis-fluid system 
  with logistic source}, 
  Math.\ Models Methods Appl.\ Sci.\ {\bf 26} (2016), 
  2071--2109.
\bibitem{Johannes-Yulan}
J. Lankeit, Y. Wang, 
{\it Global existence, boundedness and stabilization 
in a high-dimensional chemotaxis system with 
consumption}, preprint. 
\bibitem{M-2016}
  M. Mizukami, 
  {\it Remarks on smallness of 
  chemotactic effect for asymptotic stability 
  in a two-species chemotaxis system},  
  AIMS Mathematics {\bf 1} (2016), 156--164.  
\bibitem{Mizukami}  
  M. Mizukami, 
  {\it Boundedness and asymptotic stability in a 
  two-species chemotaxis-competition model 
  with signal-dependent sensitivity}, 
  Discrete Contin.\ Dyn.\ Syst.\ Ser.\ B, to appear. 
 \bibitem{MY-2016}
  M. Mizukami, T. Yokota, 
  {\it Global existence and asymptotic stability of  
  solutions to a two-species chemotaxis system 
  with any chemical diffusion}, 
  J. Differential Equations {\bf 261} (2016), 2650--2669.
  \bibitem{N-T_SIAM}
    M. Negreanu, J. I. Tello, 
    {\it On a two species chemotaxis model 
    with slow chemical diffusion}, 
    SIAM J. Math.\ Anal.\ {\bf 46} (2014), 
    3761--3781.
  \bibitem{N-T_JDE}
    M. Negreanu, J. I. Tello, 
    {\it Asymptotic stability of 
    a two species chemotaxis system 
    with non-diffusive chemoattractant}, 
    J. Differential Equations {\bf 258} (2015), 
    1592--1617.
 \bibitem{OTYM}
 K. Osaki, T. Tsujikawa, A. Yagi, M. Mimura, 
 {\it Exponential attractor for a chemotaxis-growth 
 system of equations}, 
 Nonlinear Anal.\ {\bf 51} (Ser. A: Theory Methods) (2002) 
 119--144. 
\bibitem{Sohr}
  H. Sohr, 
  ``The Navier--Stokes Equations. An Elementary Functional Analytic Approach'', Birkhäuser Verlag, Basel, 2001.
 \bibitem{stinner_tello_winkler}
 C. Stinner, J. I. Tello, M. Winkler,
 {\it Competitive exclusion in a two-species chemotaxis model},
 J. Math.\ Biol.\ {\bf 68} (2014), 1607--1626.
\bibitem{Tao-Winkler_2012_consumption}
  Y. Tao, M. Winkler, {\it Eventual smoothness and 
  stabilization of large-data solutions 
  in a three-dimensional chemotaxis system with 
  consumption of chemoattractant}, 
  J. Differential Equations {\bf 252} (2012), 
  2520--2543. 
\bibitem{Tao-Winkler_2015_mu23}
 Y. Tao, M. Winkler, 
 {\it Boundedness and decay enforced by 
 quadratic degradation in a three-dimensional 
 chemotaxis-fluid system}, 
 Z. Angew.\ Math.\ Phys.\ {\bf 66} (2015), 2555--2573. 
 \bibitem{TW-2016}
  Y. Tao, M. Winkler, 
  {\it Blow-up prevention by quadratic degradation 
  in a two-dimensional Keller--Segel-Navier--Stokes 
  system}, 
  Z. Angew.\ Math.\ Phys.\ {\bf 67} (2016), 67--138. 
 \bibitem{Tello_Winkler_2012}
   J. I. Tello, M. Winkler, 
   {\it Stabilization in a two-species chemotaxis system 
   with a logistic source}, 
   Nonlinearity, {\bf 25} (2012), 1413--1425. 
\bibitem{Tuval_et_al}
  I. Tuval, L. Cisneros, C. Dombrowski, C. W. Wolgemuth, 
  J. O. Kessler, R. E. Goldstein, 
  {\it Bacterial swimming and oxygen 
  transport near contact lines}, 
  Proc.\ Natl.\ Acad.\ Sci.\ U.S.A.\ 
  {\bf 102} (2005), 2277--2282.
\bibitem{Wang-Cao}
Y. Wang, X. Cao, 
{\it Global classical solutions of a 3D chemotaxis-Stokes 
system with rotation}, 
Discrete Contin.\ Dyn.\ Syst.\ Ser.\ B {\bf 20} (2015), 3235--3254. 

\bibitem{Wang-Xiang_2015}
Y. Wang, Z. Xiang, 
{\it Global existence and boundedness in a 
Keller--Segel-Stokes system involving a 
tensor-valued sensitivity with saturation}, 
J. Differential Equations {\bf 259} (2015), 7578--7609. 
\bibitem{Winkler_2010_Aggreagation}
  M. Winkler, 
  {\it Aggregation vs.\ global diffusive behavior 
  in the higher-dimensional Keller--Segel model}, 
  J. Differential Equations {\bf 248} (2010), 2889--2905.
\bibitem{W-2012}
  M. Winkler, 
  {\it Global large-data solutions 
  in a chemotaxis-(Navier--)Stokes system modeling 
  cellular swimming in fluid drops}, 
  Comm.\ Partial Differential Equations {\bf 37} (2012), 
  319--351. 
 \bibitem{W-2014}
  M. Winkler, 
  {\it Stabilization in a 
  two-dimensional chemotaxis-Navier--Stokes system}, 
  Arch.\ Ration.\ Mech.\ Anal.\ {\bf 211} (2014), 
  455--487. 
  \bibitem{W-2015}
 M. Winkler, 
{\it Boundedness and large time behavior in a three-dimensional 
chemotaxis-Stokes system with 
nonlinear diffusion and general sensitivity}, 
Calc. Var. Partial Differential Equations {\bf 54} (2015), 3789--3828.
 \bibitem{W-2016}
  M. Winkler, 
  {\it Global weak solutions in a 
  three-dimensional chemotaxis-Navier--Stokes system}, 
  Ann.\ Inst.\ H.\ Poincar\'e Anal.\ Non Lin\'eaire 
  {\bf 33} (2016), 1329--1352.
\bibitem{Winkler_2017_Howfar}
  M. Winkler, {\it How far do chemotaxis-driven 
  forces influence regularity in the 
  Navier--Stokes system?}, 
  Trans.\ Amer.\ Math.\ Soc.\ {\bf 369} (2017), 
  3067--3125. 
\bibitem{YCJZ-2015}
   C. Yang, X. Cao, Z. Jiang, S. Zheng, 
  {\it Boundedness in a quasilinear fully 
  parabolic Keller--Segel system of higher 
  dimension with logistic source}, 
  J.\ Math.\ Anal.\ Appl.\ {\bf 430} (2015), 585--591.
\bibitem{Zhang-Li_2015_fluid}
  Q. Zhang, Y. Li, 
  {\it Convergence rates of solutions for a 
  two-dimensional chemotaxis-Navier--Stokes system}, 
  Discrete Contin.\ Dyn.\ Syst.\ Ser.\ B 
  {\bf 20} (2015), 2751--2759. 
%



\end{thebibliography}
\end{document}